\documentclass[12pt]{amsart}


\usepackage{DKstyle}

\newcommand{\vertiii}[1]{{\left\vert\kern-0.25ex\left\vert\kern-0.25ex\left\vert #1
    \right\vert\kern-0.25ex\right\vert\kern-0.25ex\right\vert}}
\theoremstyle{plain}


\setlength\textwidth{6in}
\setlength\oddsidemargin{.35in}
\setlength\evensidemargin{.35in}

\usepackage{caption}
\usepackage[labelfont=rm]{subcaption}

\usepackage[pagebackref=true, colorlinks]{hyperref}

\hypersetup{pdffitwindow=true,linkcolor=blue,citecolor=blue,urlcolor=blue,menucolor=blue}

\usepackage{comment}

\begin{document}
\title{Shrinking targets for discrete time flows on hyperbolic manifolds}
\author{Dubi Kelmer}

\thanks{Dubi Kelmer is partially supported by NSF grant DMS-1401747.}
\email{kelmer@bc.edu}
\address{Boston College, Boston, MA}

\subjclass{}%
\keywords{}%

\date{\today}%
\dedicatory{}%
\commby{}%

\begin{abstract}
We prove dynamical Borel Canteli Lemmas for discrete time homogenous flows hitting a sequence of shrinking targets in a hyperbolic manifold. These results apply to both diagonalizable and unipotent flows, and any family of measurable shrinking targets. As a special case, we establish logarithm laws for the first hitting times to shrinking balls and shrinking cusp neighborhoods, refining and improving on perviously known results. 
\end{abstract}

 \maketitle
 \section{Introduction}
 Consider a dynamical system given by the iteration of a measure preserving transformation $T$ on a probability space $(\mathcal{X},\mu)$.  For any sequence, $\{B_m\}_{m\in \N}$, of measurable subsets of $\cX$, the Borel-Cantelli Lemma implies that if $\sum_{m}\mu(B_m)<\infty$ then $\{m: T^m x\in B_m\}$ is finite for a.e. $x\in \cX$. Conversely, assuming pairwise independence,
we also have that if $\sum_{m}\mu(B_m)=\infty$ then  $\{m:T^m x\in B_m\}$ is infinite for a.e. $x$.  
The condition of pairwise independence is too restrictive to hold in most deterministic systems, nevertheless, in many cases one can still obtain a converse statement under some additional regularity conditions on the target sets. Such results, usually referred to as Dynamical Borel-Cantelli Lemmas,
were established in many dynamical systems with fast mixing. For example, the exponential rate of mixing for diagonalizable group actions on homogenous spaces was used in \cite{Sullivan1982, KleinbockMargulis1999,Maucourant06,GorodnikShah11,KleinbockZhao2017} to study various shrinking target problems in this setting, while in \cite{ChernovKleinbock01,Dolgopyat04,Galatolo07} similar results were obtained for other dynamical systems on more general metric spaces having exponential (or super polynomial) rate of mixing.  We note that in all these cases, in addition to fast mixing, some additional geometric assumptions on the shrinking sets were needed (e.g., they are assumed to be cusp neighborhoods or metric balls shrinking to a point). 

For systems with a polynomial rate of mixing the problem is more subtle. In \cite{GalatoloPeterlongo10,HaydnNicolPerssonVaienti13} it was shown that for shrinking metric balls, $B_m$, that don't shrink too fast, a sufficiently fast polynomial rate of mixing implies that $\{m: T^m x\in B_m\}$ is infinite for a.e. $x\in \cX$. On the other hand, in \cite{Fayad06}, Fayad gave examples of a dynamical system with a polynomial rate of mixing, and a sequence of shrinking metric balls, $\{B_m\}$, with $\sum_m \mu
(B_m)=\infty$ (and even with  $m\mu(B_m)$ unbounded), for which generic orbits eventually miss the shrinking targets. There is thus no hope to prove a general dynamical Borel Cantelli lemma for systems with polynomial mixing rate.

In this paper we consider the problem for the dynamical system given by a discrete time flow on a hyperbolic manifold, $\cM$, or rather its frame bundle $\cX$. This includes the geodesic flow, but also unipotent flows having a polynomial rate of mixing which can be arbitrarily slow. Nevertheless, by using new ideas from \cite{GhoshKelmer15} utilizing an effective mean ergodic theorem to study shrinking target problems, together with results from spectral theory of hyperbolic manifolds, we are able to prove a dynamical Borel-Cantelli Lemma for these flows for any family of shrinking targets in $\cM$. This seems to be the first result of this kind that applies to any family of shrinking targets with no assumptions on their regularity, and also the first result that applies to dynamical systems with arbitrarily slow polynomial mixing

\subsection{The general setup}
Let $\bH^n$ denote real hyperbolic $n$-space and $G=\Isom(\bH^n)$ its group of isometries (so $G\cong\SO_0(n,1)$, and we can identify $\bH^{n}=G/K$ with $K\cong\SO(n)$ a maximal compact group). Here and below we will always use $n$ to denote the real dimension of $\bH^n$.
For $\G\leq G$ a lattice,  let $\cM=\G\bk \bH^n$ denote the corresponding hyperbolic manifold (or orbifold) and let $\cX=\G\bk G$
(that we can identify as the frame bundle of $\cM$). The homogenous space $\cX$ has a natural $G$-invariant probability measure, $\mu$, coming from the Haar measure of $G$, and the projection of this measure to $\cM$, still denoted by $\mu$, is the hyperbolic volume measure.

A discrete one parameter group is a subgroup, $\{g_m\}_{m\in \Z}\leq  G$, satisfying that $g_mg_{m'}=g_{m+m'}$ for any $m,m'\in \Z$. Given an unbounded discrete one parameter group, its right action on $\cX=\G\bk G$ generates a measure preserving discrete time homogenous flow, which is ergodic by Moore's ergodicity theorem.

A sequence of sets, $\cB=\{B_m\}_{m\in \N}$, is called a family of shrinking targets if $B_{m+1}\subseteq B_m$ for all $m$ and $\mu(B_m)\to 0$. We say that a set $B\subseteq \cX$ is spherical if it is invariant under the right action of $K$, and note that any set, $\tilde B\subseteq \cM$ in the base manifold, can be lifted to a spherical set $B\subseteq \cX$.
We will thus identify any family of shrinking targets in $\cM$ with a corresponding family of spherical shrinking targets in $\cX$. 

\subsection{Dynamical Borel-Cantelli }
Let $G=\Isom(\bH^n)$, $\G\leq G$ a lattice, and let $\cM=\G\bk \bH^n$ and $\cX=\G\bk G$ be as above. 
Consider the dynamical system given by a discrete time homogenous flow, $\{g_m\}_{m\in \Z}$ acting on $\cX$.
Given a sequence $\cB=\{B_m\}_{m\in \N}$ of shrinking targets we say that the orbit of a point $x\in \cX$ is hitting the targets if the set $\{m: xg_m\in B_m\}$ is unbounded and denote the set of points with hitting orbits by $\cA_h(\cB)$. The following result gives precise conditions on when $\cA_h(\cB)$ is a null set and when it is of full measure.
\begin{thm}\label{t:BCT}
Assume that $n=\dim_\R(\cM)\geq3$.
For any family, $\cB$, of spherical shrinking targets we have that $\cA_h(\cB)$ is of full measure (respectively a null set) if and only if the series 
$\sum_m \mu(B_m)$ diverges (respectively converges).
Moreover, if the sequence $\{m\mu(B_m)\}_{m\in \N}$ is unbounded, then there is a subsequence $m_j$ such that for a.e. $x\in \cX$ 
\begin{equation}\label{e:asymp}\lim_{j\to\infty}\frac{\#\{m \leq m_j : xg_m\in B_{m_j}\}} {m_j\mu(B_{m_j})}=1.\end{equation}
\end{thm}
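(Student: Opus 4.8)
The plan is to convert both claims into second‑moment estimates and feed them into the Kochen–Stone form of the Borel–Cantelli lemma, the substance being a control of the relevant variances by spectral theory of $\cM$. The convergence case is immediate: if $\sum_m\mu(B_m)<\infty$ then $\int_\cX\#\{m:xg_m\in B_m\}\,d\mu=\sum_m\mu(B_m)<\infty$, so $\cA_h(\cB)$ is null. For the rest, set $E_m=\{x\in\cX:xg_m\in B_m\}$, so $\mu(E_m)=\mu(B_m)$ by right $G$‑invariance and $\cA_h(\cB)=\limsup_m E_m$. Writing $\pi$ for the right regular representation of $G$ on $L^2_0(\cX)$ and $f^{\circ}=f-\int_\cX f\,d\mu$, a change of variables gives
\[
\mu(E_m\cap E_{m'})=\mu(B_m)\mu(B_{m'})+\bigl\langle \mathbf 1_{B_m}^{\circ},\pi(g_{m'-m})\mathbf 1_{B_{m'}}^{\circ}\bigr\rangle,
\]
so with $\Sigma_N=\sum_{m\le N}\mu(B_m)$ one has $\sum_{m,m'\le N}\mu(E_m\cap E_{m'})=\Sigma_N^2+R_N$, where $R_N=\bigl\|\sum_{m\le N}\pi(g_m)\mathbf 1_{B_m}^{\circ}\bigr\|_2^2\ge0$. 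By Kochen–Stone it suffices, for the divergence case, to prove $R_N=o(\Sigma_N^2)$; then $\mu(\cA_h(\cB))=1$, and since $\{g_m\}$ acts ergodically (Moore) the full dichotomy follows. For the asymptotic, put $\psi_N(x)=\#\{m\le N:xg_m\in B_N\}$; then $\mathbb E\psi_N=N\mu(B_N)$, $\operatorname{Var}\psi_N=\bigl\|\sum_{m\le N}\pi(g_m)\mathbf 1_{B_N}^{\circ}\bigr\|_2^2$, and it is enough to find $m_j\uparrow\infty$ with $m_j\mu(B_{m_j})\to\infty$ along which $\sum_j\operatorname{Var}\psi_{m_j}/(m_j\mu(B_{m_j}))^2<\infty$.

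The heart is the estimation of these norms. Decompose $L^2_0(\cX)=\mathcal H_{\mathrm{bad}}\oplus\mathcal H_{\mathrm{good}}$ into $G$‑invariant subspaces, where $\mathcal H_{\mathrm{bad}}$ is spanned by the Laplace eigenfunctions on $\cM$ with eigenvalue $\le\lambda^{*}$ and $\lambda^{*}$ is taken strictly below the bottom $\lambda_0=(n-1)^2/4$ of the continuous spectrum. Because the targets are spherical, $\mathbf 1_{B_m}$ is $K$‑invariant, hence so are the projections $u_m\in\mathcal H_{\mathrm{bad}}$ and $w_m\in\mathcal H_{\mathrm{good}}$ of $\mathbf 1_{B_m}^{\circ}$; this means the matrix coefficients that occur are the elementary spherical functions $\Phi_\sigma(g_k)$ of the spherical representations $\sigma$ appearing in $L^2_0(\cX)$. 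Two spectral facts are used: (i) $\Gamma$ has only finitely many exceptional eigenvalues $<\lambda_0$, and the corresponding eigenfunctions are bounded on $\cM$; (ii) one may choose $\lambda^{*}$ so that every spherical $\sigma$ in $\mathcal H_{\mathrm{good}}$ satisfies $\sum_k\theta_{\mathrm{good}}(|k|)<\infty$, where $|\Phi_\sigma(g_k)|\le\theta_{\mathrm{good}}(|k|)$. For a diagonalizable flow $\theta_{\mathrm{good}}$ decays exponentially; for a unipotent flow the tempered spherical functions are controlled by $\Xi$, giving decay $(\log k)^{C}k^{-(n-1)}$, and the remaining (good) complementary‑series coefficients decay faster than $k^{-1}$ — this is summable precisely because one can take $\lambda^{*}=(2n-3)/4$, which lies below $\lambda_0$ exactly when $n\ge3$. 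Since $\mathcal H_{\mathrm{bad}},\mathcal H_{\mathrm{good}}$ are $\pi$‑invariant, $\bigl\|\sum_m\pi(g_m)\mathbf 1_{B_m}^{\circ}\bigr\|_2^2=\bigl\|\sum_m\pi(g_m)u_m\bigr\|_2^2+\bigl\|\sum_m\pi(g_m)w_m\bigr\|_2^2$.

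For the good part, using $\|w_m\|\le\mu(B_m)^{1/2}$ and $\sum_m\mu(B_m)^{1/2}\mu(B_{m+k})^{1/2}\le\Sigma_N$ (monotonicity of $\mu(B_m)$), one gets $\bigl\|\sum_{m\le N}\pi(g_m)w_m\bigr\|^2\le\bigl(\sum_k\theta_{\mathrm{good}}(|k|)\bigr)\Sigma_N=o(\Sigma_N^2)$, and the fixed‑target analogue is $O_\Gamma(N\mu(B_N))$. For the bad part one exploits the gain from finite‑dimensionality: boundedness of the exceptional eigenfunctions $v_i$ gives $|\langle\mathbf 1_{B_m},v_i\rangle|\le\|v_i\|_\infty\mu(B_m)$, so $\|u_m\|=O_\Gamma(\mu(B_m))$ — quadratically small. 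Then, using only $\theta_{\mathrm{bad}}(k)\to0$ (Howe–Moore), $\bigl\|\sum_{m\le N}\pi(g_m)u_m\bigr\|^2\le O_\Gamma(1)\sum_{|k|<N}\theta_{\mathrm{bad}}(|k|)\,\gamma_N(k)$ with $\gamma_N(k)=\sum_m\mu(B_m)\mu(B_{m+k})$; since $\sum_k\gamma_N(k)=\Sigma_N^2$, $\gamma_N(k)\le\gamma_N(0)\le\mu(B_1)\Sigma_N$, and $\Sigma_N\to\infty$, splitting at $|k|=K$ and letting $N\to\infty$ shows this is $o(\Sigma_N^2)$. Hence $R_N=o(\Sigma_N^2)$ and the divergence case is done. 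In the fixed‑target case the same argument yields $\operatorname{Var}\psi_N=O_\Gamma(N\mu(B_N))+o(N^2\mu(B_N)^2)$, so $\operatorname{Var}\psi_N/(N\mu(B_N))^2\le O_\Gamma\bigl(1/(N\mu(B_N))\bigr)+o(1)$; choosing $m_j\uparrow\infty$ with $m_j\mu(B_{m_j})\ge2^{j}$ (possible since $\{m\mu(B_m)\}$ is unbounded) and $m_j$ large enough that the $o(1)$‑term is $\le2^{-j}$ makes $\sum_j\operatorname{Var}\psi_{m_j}/(m_j\mu(B_{m_j}))^2$ converge, whence $\psi_{m_j}/(m_j\mu(B_{m_j}))\to1$ almost everywhere.

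The main obstacle is exactly the bound on $R_N$ (and on $\operatorname{Var}\psi_N$) for unipotent flows, where mixing is only polynomial and can be arbitrarily slow: a uniform effective mean ergodic rate $\kappa(N)$ is then too weak for the naive estimate $\operatorname{Var}\psi_N\ll N^2\kappa(N)^2\mu(B_N)$ to close. The device that defeats this is the decomposition $\mathcal H_{\mathrm{bad}}\oplus\mathcal H_{\mathrm{good}}$ combined with the observation that $\mathbf 1_{B_m}$ can only have $O(\mu(B_m))$‑sized correlation with the finitely many bounded exceptional eigenfunctions, so the slow part of the spectrum enters with a harmless quadratic weight, while on $\mathcal H_{\mathrm{good}}$ the spherical matrix coefficients are honestly summable — and it is here that the hypothesis $n\ge3$ is used, since for $n=2$ the threshold $\lambda^{*}$ collapses onto $\lambda_0$ and the "bad" part ceases to be finite‑dimensional. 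The ingredients borrowed from the spectral theory of hyperbolic manifolds are precisely: finiteness of the exceptional spectrum of a lattice in $\SO_0(n,1)$, boundedness of the exceptional eigenfunctions, and the Harish‑Chandra/spherical‑function estimates that quantify the decay of $\Phi_\sigma$ along the flow.
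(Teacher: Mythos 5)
Your overall architecture (convergence case by the easy Borel--Cantelli direction; divergence case by a second-moment/Kochen--Stone bound $R_N=o(\Sigma_N^2)$; the asymptotic \eqref{e:asymp} by a variance bound along a subsequence with $m_j\mu(B_{m_j})\to\infty$) runs parallel to the paper's (quasi-independence \`a la Schmidt/Kleinbock--Margulis plus the effective mean ergodic theorem), and your treatment of the ``good'' (tempered plus small complementary) part, including the role of $n\ge 3$ in making the spherical-function decay $(\log k)k^{-(n-1)}$ and the threshold $\lambda^*=(2n-3)/4<\rho^2$ work, matches the paper. But there is a genuine gap in the ``bad'' part: your key input, that the finitely many exceptional Laplace eigenfunctions are \emph{bounded} on $\cM$, so that $|\langle \chi_{B_m},v_i\rangle|\ll\mu(B_m)$ and $\|u_m\|=O(\mu(B_m))$, is false for non-cocompact lattices. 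The exceptional spectrum contains, besides cusp forms, residual forms (residues of Eisenstein series), and these \emph{grow} in the cusp like $\vf_\sigma(na_tk)\asymp e^{(2\rho-\sigma)t}$; they are only in $L^p$ for $p<\tfrac{2\rho}{2\rho-\sigma}$, which is exactly why the paper proves that $L^p$ lemma and then bounds $|\langle\chi_{B_m},\vf_k\rangle|\ll_\epsilon\mu(B_m)^{\frac{\rho+s_k}{2\rho}-\epsilon}$ by H\"older rather than by $\mu(B_m)$. The theorem allows arbitrary spherical shrinking targets, in particular cusp neighborhoods $B_r(\infty)$ (the case needed for the logarithm laws), and for those one actually has $\langle\chi_{B},\vf_\sigma\rangle\asymp\mu(B)^{\frac{\rho+s}{2\rho}}\gg\mu(B)$, so the ``quadratically small'' bad projection simply does not hold.

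Once you replace your bound by the correct H\"older exponent, your soft argument for the bad part (using only Howe--Moore decay $\theta_{\mathrm{bad}}(k)\to0$ together with $\gamma_N(k)\le\mu(B_1)\Sigma_N$) no longer closes: the bad terms now carry $\mu(B_m)^{\alpha_k}\mu(B_{m+k})^{\alpha_k}$ with $\alpha_k<1$, and to beat the possibly very slow polynomial decay $k^{-2(\rho-s_k)}$ of the complementary-series coefficients along a unipotent flow one needs the quantitative interplay $\mu(B_m)^{\alpha_k}\mu(B_{m+k})^{\alpha_k}\le\mu(B_m)\mu(B_k)^{2\alpha_k-1}$ \emph{together with} some control on $\mu(B_k)$ from above, such as the boundedness of $\{m\mu(B_m)\}$. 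This is precisely why the paper splits the divergence case in two: when $m\mu(B_m)$ is bounded it proves quasi-independence (Proposition \ref{p:UQI}, using $\mu(B_l)\ll 1/l$ and $n\ge3$ to make the exceptional series converge), and when $m\mu(B_m)$ is unbounded it instead uses the spherical effective mean ergodic theorem (Theorem \ref{t:SMET}, Corollary \ref{c:Lp}, Proposition \ref{p:multi1}) along a subsequence. Your fixed-target variance estimate for \eqref{e:asymp} suffers from the same defect, since its ``$o(N^2\mu(B_N)^2)$'' bad-part bound again presupposes $\|u_N\|\ll\mu(B_N)$; with the correct exponents it can be repaired, but only by redoing the balancing carried out in Corollary \ref{c:Lp} and Proposition \ref{p:multi1}. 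In short: your proof is correct as written only for cocompact $\G$ (or lattices with no residual exceptional spectrum); for the general statement you must replace the boundedness claim by the $L^p$ theory of residual forms and then supply the extra quantitative arguments (or the paper's case split) that this weaker bound forces.
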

\begin{rem}
We note that any unbounded one parameter subgroup in $G$ is either diagonalizable or unipotent.
If the group $\{g_m\}_{m\in \Z}$ is diagonalizable the same results holds also for $n=2$.
For unipotent flows, when $n=2$, the limit \eqref{e:asymp} still holds under the stronger assumption that there is some $\eta>0$ so that  $\{m^{1-\eta}\mu(B_m)\}_{m\in \N}$ is unbounded; or if $B_1$ is pre-compact and the sequence $\{\frac{m\mu(B_m)}{(\log m)^2}\}_{m>1}$ is unbounded. \end{rem}
\begin{rem}
It is remarkable that this result holds for any family of spherical shrinking targets with no additional assumption on their geometry or regularity, and applies both for diagonalizable and unipotent flows. Moreover, this result is new even for the classical setting when the targets are shrinking metric balls and the flow is the discrete time geodesic flow, where the best previously known result was  \cite{KleinbockZhao2017}, obtained the same conclusion only under an additional assumption on the rate of decay.
\end{rem}

%



\begin{rem}
Given a sequence $\cB$ of shrinking targets it is not hard to see that $x\in \cA_{\rm h}(\cB)$  implies that there is a subsequence $m_j$ such that $\{m \leq m_j : xg_m\in B_{m_j}\}\neq \emptyset$ for all $j$.  When the sequence $\{m\mu(B_m)\}_{m\in \N}$ is unbounded, \eqref{e:asymp} shows that there is a subsequence so that these intersections are not only non-empty, but in fact contain asymptotically the expected number of elements. When $\{m\mu(B_m)\}_{m\in \N}$ is bounded, the asymptotics \eqref{e:asymp} can not be expected to hold (e.g. if $m\mu(B_m)$ is bounded away from the integers).
Nevertheless, in this case we still have that for a.e. $x\in \cX$
\begin{equation}\label{e:SBC}
\lim_{M\to\infty}\frac{\#\{m\leq M: xg_m\in B_m\}}{\sum_{m\leq M} \mu(B_m)}=1.
\end{equation}
\end{rem}

\subsection{Orbits eventually always hitting}
Next we study the subtler point of whether the finite orbits $\{xg_k: k\leq m\}$ hit or miss the targets $B_m$.
We say that an orbit of a point $x\in \cX$ is {\em{eventually always hitting}}  if $\{xg_k:k\leq m\}\cap B_m\neq\emptyset$ for all sufficiently large $m$, and denote by $\cA_{\rm ah}(\cB)$ the set of points with such orbits.\footnote{This notion seems closely related to the notion of $\psi$-Dirichlet numbers introduced in \cite{KleinbockWadleigh2016} in the context of Diophantine approximations.}
This gives rise to the following natural question: Under what conditions on the shrinking rate of $\mu(B_m)$, can one deduce that $\cA_{\rm ah}(\cB)$ has full (or zero) measure? 

%

A soft general argument (see Proposition \ref{p:converse} below) shows that if there is $c<1$ such that the set 
$\{m: m\mu(B_m)\leq c\}$ is unbounded then $\cA_{\rm ah}(\cB)$ is a null set.  
It is not hard to construct a sequence of sets with $m\mu(B_m)$  unbounded, having a subsequence with, say, $m_j\mu(B_{m_j})\leq 1/2$. Hence in order to conclude that a generic orbit is eventually always hitting we need a stronger assumption on the rate of shrinking. Our next result shows that the summability condition
\begin{equation}\label{e:summable}
\sum_{j=0}^\infty \frac{1}{2^{j}\mu(B_{2^j})}<\infty,
\end{equation}
is sufficient to show that $\cA_{\rm ah}(\cB)$ has full measure. Moreover, if we further assume that 
\begin{equation}\label{e:double}
 \mu(B_{2^{j}})\ll \mu(B_{2^{j+1}}) 
\end{equation}
we can even show that, generically, the intersections  have roughly the expected number of elements. 

Here and below we use the notation $A(t)\ll B(t)$ or $A(t)=O(B(t))$ to indicate that there is a constant $c>0$ such that $A(t)\leq cB(t)$. We use subscripts to indicate that constant depends on
additional parameters. We also write $A(t)\asymp B(t)$ to indicate that
$A(t)\ll B(t)\ll A(t)$. 
With these notations we have

\begin{thm}\label{t:asymp}
Retain the notation of Theorem \ref{t:BCT}. Then, \eqref{e:summable} implies that $\cA_{\rm ah}(\cB)$ is of full measure and  \eqref{e:summable} and \eqref{e:double} imply that for a.e. $x\in \cX$ for all sufficiently large $m$ 
$$\#\{k \leq m : xg_k\in B_{m}\}\asymp  m\mu(B_m)$$
\end{thm}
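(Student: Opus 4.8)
The plan is to reduce the whole statement to a single second--moment estimate for the counting functions
\[
N_M^B(x):=\#\{1\le k\le M: xg_k\in B\}=\sum_{k=1}^M\mathbf{1}_B(xg_k),
\]
and then run a dyadic Borel--Cantelli argument. Since $\mu$ is right $G$--invariant, $\int_\cX N_M^B\,d\mu=M\mu(B)$, and expanding the square gives
\[
\big\|N_M^B-M\mu(B)\big\|_{L^2(\cX)}^2=\sum_{|\ell|\le M}(M-|\ell|)\bigl(\mu(B\cap Bg_\ell)-\mu(B)^2\bigr).
\]
The key input, a sharpening of the $L^2$--estimate that already underlies Theorem~\ref{t:BCT}, is that for $n\ge3$
\begin{equation*}
\big\|N_M^B-M\mu(B)\big\|_{L^2(\cX)}^2\ll_{\cX} M\mu(B)+M^{2-\kappa_0}\mu(B)^2\tag{$\star$}
\end{equation*}
for some $\kappa_0=\kappa_0(\cX,\{g_m\})>0$. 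To obtain $(\star)$ one decomposes the $K$--invariant function $\mathbf{1}_B-\mu(B)$ along the spherical spectrum of $\cX$: the tempered (continuous) part has matrix coefficients bounded by the Harish--Chandra function $\Xi$, and contributes $\ll\|\mathbf{1}_B-\mu(B)\|_2^2\sum_{|\ell|\le M}(M-|\ell|)\Xi(g_\ell)\ll M\mu(B)$, because $\Xi(g_\ell)$ decays exponentially along a diagonalizable orbit and like $(1+\log|\ell|)|\ell|^{-(n-1)}$ along a unipotent one, both summable for $n\ge3$ (for $n=2$ this sum carries an extra $(\log M)^2$, which is exactly what forces the stronger hypotheses there); the contribution of the finitely many exceptional eigenfunctions $\phi_1,\dots,\phi_r$ below the continuous spectrum is $\ll\sum_i|\langle\mathbf{1}_B,\phi_i\rangle|^2M^{2-\kappa_i}$, with $\kappa_i>0$ coming from the spectral gap, and here one uses the boundedness of each $\phi_i$ on $\cX$ to get $|\langle\mathbf{1}_B,\phi_i\rangle|=|\int_B\phi_i|\le\|\phi_i\|_\infty\mu(B)\ll\mu(B)$, which produces the decisive factor $\mu(B)^2$, with $\kappa_0=\min_i\kappa_i$.

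Granting $(\star)$, I would first deduce that \eqref{e:summable} makes $\cA_{\rm ah}(\cB)$ of full measure. Since $\cB$ is decreasing, whenever $2^j\le m<2^{j+1}$ we have $B_{2^{j+1}}\subseteq B_m$ and $\{xg_k:k\le 2^j\}\subseteq\{xg_k:k\le m\}$, so it suffices to prove that for a.e.\ $x$ one has $N_{2^j}^{B_{2^{j+1}}}(x)\ge1$ for all large $j$. By $(\star)$ and Chebyshev,
\[
\mu\bigl(\{x:N_{2^j}^{B_{2^{j+1}}}(x)=0\}\bigr)\le\frac{\bigl\|N_{2^j}^{B_{2^{j+1}}}-2^j\mu(B_{2^{j+1}})\bigr\|_2^2}{\bigl(2^j\mu(B_{2^{j+1}})\bigr)^2}\ll\frac1{2^j\mu(B_{2^{j+1}})}+\frac1{2^{\kappa_0 j}},
\]
and both series in $j$ converge: the first because $\sum_j\frac1{2^j\mu(B_{2^{j+1}})}=2\sum_{j\ge1}\frac1{2^j\mu(B_{2^j})}<\infty$ by \eqref{e:summable}, the second because $\kappa_0>0$. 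By Borel--Cantelli, a.e.\ $x$ lies in all but finitely many of these events, and for such $x$ every large $m$ (with $2^j\le m<2^{j+1}$) supplies some $k\le 2^j\le m$ with $xg_k\in B_{2^{j+1}}\subseteq B_m$, i.e.\ $x\in\cA_{\rm ah}(\cB)$.

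For the quantitative conclusion I would replace ``$N\ge1$'' by ``$N$ lies within a factor $2$ of its mean''. Applying $(\star)$ and Chebyshev to the events $\{|N_M^B-M\mu(B)|>\tfrac12M\mu(B)\}$ with $(M,B)=(2^j,B_{2^{j+1}})$ and with $(M,B)=(2^{j+1},B_{2^j})$ yields convergent series of the same type (now $\sum_j\frac1{2^j\mu(B_{2^{j+1}})}$ and $\sum_j\frac1{2^{j+1}\mu(B_{2^j})}$, both finite by \eqref{e:summable}, together with the trivially summable $2^{-\kappa_0 j}$), so for a.e.\ $x$ and all large $j$,
\[
\tfrac12\,2^j\mu(B_{2^{j+1}})\le N_{2^j}^{B_{2^{j+1}}}(x)\qquad\text{and}\qquad N_{2^{j+1}}^{B_{2^j}}(x)\le 2^{j+2}\mu(B_{2^j}).
\]
Since $N_M^B(x)$ is monotone in $M$ and in $B$, for $2^j\le m<2^{j+1}$ the nesting $B_{2^{j+1}}\subseteq B_m\subseteq B_{2^j}$ gives $N_{2^j}^{B_{2^{j+1}}}(x)\le\#\{k\le m:xg_k\in B_m\}\le N_{2^{j+1}}^{B_{2^j}}(x)$, so $\#\{k\le m:xg_k\in B_m\}$ is squeezed between $\tfrac12\,2^j\mu(B_{2^{j+1}})$ and $2^{j+2}\mu(B_{2^j})$. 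Finally, the doubling hypothesis \eqref{e:double} forces $\mu(B_{2^j})\asymp\mu(B_{2^{j+1}})\asymp\mu(B_m)$ and $2^j\asymp m$ for $2^j\le m<2^{j+1}$, so both ends are $\asymp m\mu(B_m)$, which is exactly the claim.

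The only genuinely delicate ingredient is $(\star)$, and within it the fact that the exceptional--spectrum term carries $\mu(B)^2$ rather than $\mu(B)$. This gain — a consequence of the boundedness of the finitely many exceptional eigenfunctions — is precisely what makes the error series converge no matter how slowly $2^j\mu(B_{2^j})$ is permitted to grow under \eqref{e:summable}; without it one would only control families with $m\mu(B_m)$ bounded below by a power of $m$. Everything else amounts to summation of geometric series together with the monotonicity and nesting of the family $\cB$.
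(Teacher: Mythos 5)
Your overall architecture (dyadic decomposition, a second--moment bound, Chebyshev plus Borel--Cantelli, and the squeeze via \eqref{e:double}) is exactly the paper's route (Lemmas \ref{l:CTBo}, \ref{l:CTB}, \ref{l:muCTB} and Theorem \ref{t:main}). The gap is in your key estimate $(\star)$, specifically in the step where you bound the exceptional contribution by $|\langle\mathbf{1}_B,\phi_i\rangle|\le\|\phi_i\|_\infty\mu(B)$. The exceptional spherical spectrum of $L^2(\G\bk G)$ consists of cusp forms \emph{and} residual forms (residues of Eisenstein series at poles $\sigma\in[\rho,2\rho)$), and the latter are \emph{not} bounded on a non-compact quotient: they grow like $e^{(2\rho-\sigma)t}$ in the cusp, as recorded in the paper's spectral-decomposition section. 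Since Theorem \ref{t:asymp} allows arbitrary spherical shrinking targets --- in particular cusp neighborhoods $B_r(\infty)$, which is precisely the case needed for the logarithm laws --- you cannot assume $B$ stays in a compact set where $\|\phi_i|_\Omega\|_\infty<\infty$ would rescue the bound (that is the content of Corollary \ref{c:comp}, which the paper only uses as an alternative in the compactly-supported case).

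The repair is the paper's: residual forms lie in $L^p$ for every $p<\tfrac{2\rho}{2\rho-\sigma}$, so H\"older gives $|\langle\chi_B,\vf_k\rangle|\ll_\epsilon\mu(B)^{\frac{\rho+s_k}{2\rho}-\epsilon}$ rather than $\mu(B)$. Contrary to your closing remark, this weaker, fractional-power gain is enough: after Chebyshev the exceptional term at scale $M=2^j$ becomes $\ll \bigl(M\mu(B)\bigr)^{-\frac{\rho-s_k}{\rho}-\epsilon'}M^{-(\rho-s_k)(2-\frac1\rho)+\epsilon''}$, and since \eqref{e:summable} forces $2^j\mu(B_{2^j})\to\infty$ (hence $\ge1$ eventually) and $\rho\ge1$ when $n\ge3$, this is $\ll 2^{-\delta j}$ for some $\delta>0$ depending on the spectral gap, hence summable regardless of how slowly $2^j\mu(B_{2^j})$ grows. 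The genuine obstruction to the full $L^2$-norm saving appears only in the borderline case $d=2\rho$ (for one-parameter flows, unipotent flows with $n=2$), which is exactly why that case is excluded from the theorem and handled under stronger hypotheses in the remark. With this substitution for your boundedness claim, the rest of your argument goes through and coincides with the paper's proof.
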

\begin{rem}
When $\{g_m\}_{m\in \Z}$ is diagnolizable the same result holds also for $n=2$. For unipotent flows, when  $n=2$, the same conclusion holds if we replace the assumption \eqref{e:summable} by the stronger assumption that $m \mu(B_m)\gg m^\eta$ for some $\eta>0$, or by the assumption that $B_1$ is pre-compact and 
$\sum_{j=0}^\infty \frac{j^2}{2^{j}\mu(B_{2^j})}<\infty.$
\end{rem}

In general, we don't know if there is a clean zero/one law for the measure of $\cA_{\rm ah}(\cB)$ as we have for $\cA_{\rm h}(\cB)$. However, if the shrinking sets decay polynomially, in the sense that $\mu(B_m)\asymp m^{-\eta}$ for some $\eta>0$, then \eqref{e:double} automatically holds and \eqref{e:summable} holds when $\eta<1$. We thus get the following clean result:
\begin{cor}\label{c:regular}
For $G=\Isom(\bH^n)$ with $n\geq 2$, let  $\{g_m\}_{m\in\Z}\leq G$ denote an unbounded discrete one parameter group and $\{B_m\}_{m\in \N}$ a family of spherical shrinking targets in $\cX=\G\bk G$ with $\mu(B_m)\asymp m^{-\eta}$.  If $\eta< 1$ (respectively $\eta>1$) then for a.e. $x\in \cX$ for all $m$ sufficiently large $\{k \leq m : xg_k\in B_{m}\}\neq \emptyset$ (respectively  $\{k \leq m : xg_k\in B_{m}\}= \emptyset$). Moreover, when $\eta<1$ for a.e. $x\in \cX$, for all $m$ sufficiently large 
$$\#\{k \leq m : xg_k\in B_{m}\}\asymp  m^{1-\eta}.$$
\end{cor}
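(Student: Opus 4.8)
The plan is to deduce Corollary~\ref{c:regular} directly from Theorem~\ref{t:asymp} and its companion remarks, by checking that the polynomial decay hypothesis $\mu(B_m)\asymp m^{-\eta}$ forces the abstract conditions \eqref{e:summable} and \eqref{e:double} when $\eta<1$, and by invoking the soft converse (Proposition~\ref{p:converse}, referenced above) when $\eta>1$.

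First I would handle the case $\eta<1$. Since $\mu(B_{2^j})\asymp 2^{-j\eta}$, we have $\mu(B_{2^j})/\mu(B_{2^{j+1}})\asymp 2^{\eta}$, a bounded constant, so \eqref{e:double} holds automatically. For \eqref{e:summable}, note $\frac{1}{2^j\mu(B_{2^j})}\asymp \frac{1}{2^j\cdot 2^{-j\eta}}=2^{-j(1-\eta)}$, and since $1-\eta>0$ this is a convergent geometric series; thus \eqref{e:summable} holds. Theorem~\ref{t:asymp} then gives, for a.e.\ $x$ and all large $m$, that $\#\{k\le m: xg_k\in B_m\}\asymp m\mu(B_m)\asymp m\cdot m^{-\eta}=m^{1-\eta}$, which in particular is nonempty for large $m$; this proves both assertions in the $\eta<1$ case. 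For $n\ge 3$ this is immediate; for $n=2$ I would invoke the remark after Theorem~\ref{t:asymp}: in the diagonalizable case the conclusion holds verbatim, and in the unipotent case the hypothesis $m\mu(B_m)\gg m^\eta$ is met because $m\mu(B_m)\asymp m^{1-\eta}$ with $1-\eta>0$, so the stronger unipotent hypothesis (with exponent $1-\eta$ in place of $\eta$) is satisfied and the same conclusion follows. This covers $n=2$ uniformly.

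Next, for $\eta>1$, I would argue that a.e.\ orbit eventually misses the targets, i.e.\ $\cA_{\rm ah}(\cB)$ is a null set, in fact even $\cA_{\rm h}(\cB)$ is null. Indeed $\sum_m\mu(B_m)\asymp\sum_m m^{-\eta}<\infty$ since $\eta>1$, so by the convergence half of Theorem~\ref{t:BCT} the set $\cA_{\rm h}(\cB)$ of points whose orbit hits the targets infinitely often is a null set. Since a point with an eventually-always-hitting orbit certainly has $\{k\le m: xg_k\in B_m\}\neq\emptyset$ for a sequence of $m\to\infty$, and $B_m\subseteq B_k$ for $k\le m$ forces $xg_k\in B_k$ cannot be used directly—more carefully, one uses the simpler observation that if $\{k\le m: xg_k\in B_m\}\neq\emptyset$ for all large $m$ then in particular $xg_m\in B_m$ infinitely often is \emph{not} what we get; instead one checks that any such $x$ lies in $\cA_{\rm h}(\cB)$ because for each large $m$ there is $k(m)\le m$ with $xg_{k(m)}\in B_m\subseteq B_{k(m)}$, and the values $k(m)$ are unbounded (else some fixed $k$ would lie in $B_m$ for all large $m$, impossible as $\mu(B_m)\to0$ and a single point is generically not in the intersection—one argues instead that the indices $m$ are unbounded and $k(m)$ cannot stabilize since $B_m$ shrinks below any neighborhood). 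Hence $\cA_{\rm ah}(\cB)\subseteq\cA_{\rm h}(\cB)$ up to a null set, and so $\cA_{\rm ah}(\cB)$ is null; thus for a.e.\ $x$, $\{k\le m: xg_k\in B_m\}=\emptyset$ for all large $m$. This works for all $n\ge 2$ since the convergence half of Theorem~\ref{t:BCT} is stated there without the $n\ge3$ restriction obstructing this direction (and in any case Proposition~\ref{p:converse} gives the same conclusion softly when $\eta>1$ implies $m\mu(B_m)\to0$).

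The main obstacle I anticipate is the bookkeeping in the $\eta>1$ direction: cleanly showing that "eventually always hitting" implies "hitting" (so that the Borel--Cantelli convergence statement applies), and more importantly confirming that the relevant halves of Theorems~\ref{t:BCT} and~\ref{t:asymp} are available at $n=2$ in each of the diagonalizable and unipotent subcases, so that the stated range $n\ge2$ in the corollary is justified. Everything else is a routine substitution of $\mu(B_m)\asymp m^{-\eta}$ into the hypotheses of Theorem~\ref{t:asymp}. I expect the final write-up to be only a few lines once the $n=2$ citations are lined up correctly.
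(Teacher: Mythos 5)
Your treatment of the case $\eta<1$ is correct and is exactly the paper's route: $\mu(B_m)\asymp m^{-\eta}$ makes \eqref{e:double} automatic and turns \eqref{e:summable} into a convergent geometric series, so Theorem \ref{t:asymp} applies, and for $n=2$ the remark following it covers both subcases (in the unipotent one because $m\mu(B_m)\asymp m^{1-\eta}$ verifies the hypothesis $m\mu(B_m)\gg m^{\eta'}$ with $\eta'=1-\eta>0$). This gives both the nonemptiness and the asymptotic $\#\{k\le m: xg_k\in B_m\}\asymp m\mu(B_m)\asymp m^{1-\eta}$.

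The $\eta>1$ half, however, ends with a genuine logical gap. The corollary asserts that for a.e.\ $x$ the set $\{k\le m: xg_k\in B_m\}$ is empty for \emph{all} sufficiently large $m$; this is strictly stronger than $\mu(\cA_{\rm ah}(\cB))=0$, which only says that for a.e.\ $x$ this set is empty for \emph{infinitely many} $m$. Your chain ``$\cA_{\rm ah}(\cB)\subseteq\cA_{\rm h}(\cB)$ up to a null set, hence $\cA_{\rm ah}(\cB)$ is null, thus for a.e.\ $x$ the intersection is empty for all large $m$'' is therefore a non sequitur at the last step, and the parenthetical appeal to Proposition \ref{p:converse} has the same defect, since it too only yields nullity of $\cA_{\rm ah}(\cB)$. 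The repair uses precisely the ingredients you already wrote down, but applied to the right set: let $E$ be the set of $x$ for which $\{k\le m: xg_k\in B_m\}\neq\emptyset$ for infinitely many $m$, and for such $x$ and $m$ pick $k(m)\le m$ with $xg_{k(m)}\in B_m\subseteq B_{k(m)}$ (nestedness). Either $k(m)$ is unbounded, so $xg_k\in B_k$ for infinitely many $k$ and $x\in\cA_{\rm h}(\cB)$, which is null by the convergence half of Borel--Cantelli since $\sum_m m^{-\eta}<\infty$ (valid for every $n\ge2$); or some $k_0$ recurs, forcing $xg_{k_0}\in\bigcap_m B_m$, which for each fixed $k_0$ is a null condition because the action preserves $\mu$ and $\mu\bigl(\bigcap_m B_m\bigr)=\lim_m\mu(B_m)=0$, and the union over $k_0\in\N$ remains null. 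Hence $\mu(E)=0$, and the complement of $E$ is exactly the set where the intersection is empty for all large $m$, which is the stated conclusion. This is the same dichotomy the paper itself runs in the proof of Theorem \ref{t:strongloglaw} (the sequence $k_j\le m_j$ there, with the orbit of $x_0$ discarded as a null set), so with this correction your argument matches the intended proof.
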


\subsection{Non-spherical shrinking targets}
%

%
When the group $\{g_m\}_{m\in \Z}$ is diagonalizable, taking additional advantage of the exponential rate of mixing, we can adapt our method to get similar results also for non-spherical shrinking targets that are not too irregular.

To make the notion of regular more precise, given parameters $c,\alpha>0$ and a Sobolev norm $\cS$ on $\cX=\G\bk G$ (see section \ref{s:norms} blow) we say that a set $B\subseteq \G\bk G$ is $(c,\alpha)$-regular for $\cS$, if there are smooth functions $f^\pm$ approximating the indicator function, $\chi_B$, in the sense that $0\leq f^-\leq \chi_B\leq f^+\leq 1$ with $c^{-1}\int f^+d \mu\leq \mu(B)\leq c\int f^-d\mu$ and $\cS(f^\pm)\leq c\mu(B)^{-\alpha}$. We say that a collection of sets $\cB$ is regular (for $\cS$) if there is some $c>1$ and $\alpha\geq 0$  such that all sets $B\in \cB$ are $(c,\alpha)$-regular. 

\begin{rem}
Without the restriction on the Sobolev norm it is always possible to approximate indicator functions by smooth functions in this way. The additional restriction on the size of the Sobolev norm is rather mild as we allow it to grow as any power of $1/\mu(B)$. In particular, it is not hard to see that, for any Riemannian metric on $G$, the collection of all metric balls and their complements is a regular collection.
\end{rem}
\begin{thm}\label{t:diag}
Let $\{g_m\}_{m\in \Z}\leq G$ be a diagonalizable one parameter group. Let $\cB=\{B_m\}_{m\in \N}$ be a sequence of regular shrinking targets. Then, the condition 
\begin{equation}\label{e:summable2}\sum_{j=1}^\infty \frac{|\log(\mu(B_{2^j}))|}{2^j\mu(B_{2^j})}<\infty\end{equation}
implies that $\cA_{\rm ah}(\cB)$ is of full measure 
and further assuming \eqref{e:double} we get that for a.e. $x\in \cX$ for all sufficiently large $m$
$$\#\{k \leq m : xg_k\in B_{m}\}\asymp  m\mu(B_m)$$
\end{thm}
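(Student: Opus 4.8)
The plan is to mimic the strategy used to prove Theorem \ref{t:asymp}, but to exploit the exponential mixing available in the diagonalizable case in order to replace the spherical averaging argument by a direct smoothing estimate that tolerates the weaker summability hypothesis \eqref{e:summable2}. First I would fix the dyadic scale $m_j=2^j$ and, for each such scale, use the regularity of $\cB$ to pick smooth bump functions $f_j^\pm$ with $0\le f_j^-\le \chi_{B_{m_j}}\le f_j^+\le 1$, $\int f_j^+\,d\mu\asymp \mu(B_{m_j})\asymp\int f_j^-\,d\mu$, and $\cS(f_j^\pm)\ll \mu(B_{m_j})^{-\alpha}$. The key quantity to control is the counting function $N_j(x)=\sum_{k\le m_j}\chi_{B_{m_j}}(xg_k)$, which I sandwich between $N_j^\pm(x)=\sum_{k\le m_j}f_j^\pm(xg_k)$. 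By exponential decay of matrix coefficients for the $G$-action (equivalently, effective equidistribution of the orbit $\{xg_k\}$ with a power-saving rate in the Sobolev norm), one has $\int N_j^\pm\,d\mu = m_j\int f_j^\pm\,d\mu$ and a variance bound of the shape $\mathrm{Var}(N_j^\pm)\ll m_j^{1+\epsilon}\,\cS(f_j^\pm)^2 \cdot(\text{something})$; more precisely I expect a bound like $\mathrm{Var}(N_j^\pm)\ll m_j \mu(B_{m_j})^{-2\alpha}$ or, after a more careful diagonal-vs-off-diagonal split, something closer to $m_j\mu(B_{m_j})^{1-\delta}$ for the $\epsilon$-loss to be absorbed.

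Next I would run a Borel–Cantelli / Chebyshev argument along the dyadic subsequence. Writing $\lambda_j=m_j\mu(B_{m_j})$, Chebyshev gives $\mu\bigl(|N_j^\pm-\lambda_j|\ge \tfrac12\lambda_j\bigr)\ll \mathrm{Var}(N_j^\pm)/\lambda_j^2$. The hypothesis \eqref{e:summable2}, i.e. $\sum_j |\log\mu(B_{2^j})|/(2^j\mu(B_{2^j}))<\infty$, is exactly what is needed to make $\sum_j \mathrm{Var}(N_j^\pm)/\lambda_j^2$ converge once the logarithmic factor is used to beat the polynomial loss coming from $\cS(f_j^\pm)\ll\mu(B_{m_j})^{-\alpha}$ — the point being that $|\log\mu(B_{2^j})|$ dominates any fixed power of the ``number of dyadic steps'' and hence controls the $\mu(B_{m_j})^{-2\alpha}$ factor after one takes logarithms in the exponential-mixing estimate (this is where the diagonalizable case genuinely differs from the polynomially-mixing unipotent case, which needed the stronger \eqref{e:summable}). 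So by Borel–Cantelli, for a.e. $x$ we have $N_j^\pm(x)\asymp \lambda_j$ for all large $j$; in particular $N_j(x)\ge N_j^-(x)\ge \tfrac12\lambda_j>0$, so $\{xg_k:k\le m_j\}\cap B_{m_j}\ne\emptyset$ along the dyadic sequence.

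Then I would pass from the dyadic subsequence to all $m$. Given $m$, choose $j$ with $m_j\le m< m_{j+1}$. Since the $B_m$ are nested, $B_m\subseteq B_{m_j}$, and a finite orbit of length $m\ge m_j$ contains the orbit of length $m_j$; so merely getting a hit requires comparing $\mu(B_m)$ with $\mu(B_{m_j})$, which is harmless for the first assertion (any hit at scale $m_j$ need not be a hit at scale $m$ unless $B_m$ is not much smaller than $B_{m_j}$ — so actually for the \emph{eventually always hitting} conclusion one must be slightly more careful and use $\{xg_k:k\le m\}\cap B_m \supseteq \{xg_k: k\le m\}\cap B_{m_{j+1}}$ together with $N_{j+1}(x)>0$ and the fact that $m\ge m_j = m_{j+1}/2$, so all but a bounded-proportion initial segment of the length-$m_{j+1}$ orbit already lies in the length-$m$ orbit; combined with the asymptotic $N_{j+1}(x)\asymp\lambda_{j+1}\to\infty$ this forces a hit inside $\{xg_k:k\le m\}$). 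For the second assertion one additionally invokes \eqref{e:double}, $\mu(B_{2^{j+1}})\gg\mu(B_{2^j})$, to get $\lambda_j\asymp\lambda_{j+1}$ and $\mu(B_m)\asymp\mu(B_{m_j})$, whence $\#\{k\le m: xg_k\in B_m\}$ is squeezed between $N_{j}^-(x)\gg\lambda_j\gg m\mu(B_m)$ and $N_{j+1}^+(x)\ll\lambda_{j+1}\ll m\mu(B_m)$.

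I expect the main obstacle to be the variance estimate: getting an effective second-moment bound for $N_j^\pm$ with a power-saving in $m_j$ and only a controlled (at most polynomial in $1/\mu(B_{m_j})$) dependence on the Sobolev norm, and then checking that the extra $|\log\mu(B_{2^j})|$ in \eqref{e:summable2} is precisely enough to close the Borel–Cantelli sum — i.e. verifying that exponential mixing converts the polynomial Sobolev loss into a logarithmic loss at the dyadic scale. The passage from dyadic to general $m$ for the \emph{eventually always hitting} statement also requires the bookkeeping sketched above to be done carefully rather than invoked; everything else is a routine repetition of the argument behind Theorem \ref{t:asymp}.
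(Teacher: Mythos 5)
Your overall architecture (dyadic scales, smooth sandwiching via regularity, a second-moment/Chebyshev bound, Borel--Cantelli, then interpolation using nestedness and \eqref{e:double}) is the same as the paper's, which runs the argument through Proposition \ref{p:diag1} via Lemmas \ref{l:CTBo} and \ref{l:CTB}. But the step you defer as ``the main obstacle'' is the entire content of the theorem, and the bounds you actually write down would not close the argument. The paper's key input (Theorem \ref{t:DMET}) is that for smooth $f$ one has $\|\beta^+_m f-\mu(f)\|_2^2\ll \|f\|_2^2\log\bigl(\cS(f)/\|f\|_2\bigr)/m$: in the correlation sum one pays $\|f\|_2^2$ trivially only for the $O\bigl(\log(\cS(f)/\|f\|_2)\bigr)$ terms with $|k|$ below the threshold where $\cS(f)^2e^{-\alpha c|k|}\le\|f\|_2^2$, and beyond that threshold the exponential decay absorbs the Sobolev norm entirely. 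Applied to $f^\pm_m$ with $\cS(f^\pm_m)\ll\mu(B_m)^{-\alpha}$ this gives a variance for your $N_j^\pm$ of size $\ll m_j\,\mu(B_{m_j})\,|\log\mu(B_{m_j})|$, and Chebyshev then produces exactly the summand of \eqref{e:summable2} (this is Lemma \ref{l:muCmfD}). Your proposed bounds $\mathrm{Var}(N_j^\pm)\ll m_j\mu(B_{m_j})^{-2\alpha}$ or $m_j\mu(B_{m_j})^{1-\delta}$ are much weaker, and the accompanying claim that the factor $|\log\mu(B_{2^j})|$ in \eqref{e:summable2} ``controls the $\mu(B_{m_j})^{-2\alpha}$ factor'' cannot be right: a logarithmic gain never beats a polynomial loss in $1/\mu(B_m)$, and with those variance bounds the Borel--Cantelli sum would require a hypothesis polynomially stronger than \eqref{e:summable2}. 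So the decisive estimate --- logarithmic, not polynomial, dependence on the Sobolev norm --- must be stated and proved, not merely expected.

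The interpolation from dyadic $j$ to general $m$ also contains an index error that matters. For $m_j\le m<m_{j+1}$, knowing that $N_{j+1}(x)=\#\{k\le m_{j+1}: xg_k\in B_{m_{j+1}}\}$ is positive, or even $\asymp\lambda_{j+1}$, does not force a hit of $\{xg_k:k\le m\}$ in $B_m$: the length-$m$ orbit is only an initial segment of the length-$m_{j+1}$ orbit, and all $\asymp\lambda_{j+1}$ hits (under \eqref{e:summable2}, $\lambda_{j+1}$ can be as small as a power of $\log m_{j+1}$) could lie among the up to $m_{j+1}/2$ excluded indices in $(m,m_{j+1}]$. Likewise your upper bound $\#\{k\le m: xg_k\in B_m\}\le N^+_{j+1}(x)$ fails, because $B_m\supseteq B_{m_{j+1}}$ and so $f^+_{j+1}$ does not dominate $\chi_{B_m}$. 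The correct bookkeeping, as in Lemmas \ref{l:CTBo} and \ref{l:CTB}, is to control at dyadic scales the \emph{mismatched} counts: the short orbit $k\le 2^{j-1}$ hitting the small target $B_{2^j}$ (for hitting and the lower bound) and the long orbit $k\le 2^{j+1}$ hitting the large target $B_{2^j}$ (for the upper bound, using \eqref{e:double}); both are handled by the same Chebyshev estimate, so the repair is routine, but as written your interpolation step does not go through.
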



\subsection{Logarithm laws, penetration depth, and first hitting times}
Our results on shrinking targets have an immediate application for establishing logarithm laws for penetration depth and first hitting times. We first recall some of the known results for these problems.

Let $d(\cdot,\cdot)$ denote the hyperbolic distance on $\cM=\G\bk \bH^n$ that we lift to a $K$-invariant distance function on $\G\bk G$ \footnote{Instead of the hyperbolic distance we can use any $K$-invariant distance function for which the measure of shrinking balls and cusp neighborhoods have similar asymptotics}. With this distance function we consider shrinking targets that are either shrinking balls, 
$$B_{r}(x_0)=\{x:  d(x,x_0)<r\},$$
or (when $\G\bk G$ is non-compact) shrinking cusp neighborhoods
$$B_{r}(\infty)=\{x: d(x,x_0)\geq r\}.$$
We then have that $\mu(B_{r}(x_0))\asymp r^{n}$ for $r<1$ and that  $\mu(B_{r}(\infty))\asymp e^{(1-n)r}$ for $r>1$.

Given a one parameter subgroup $\{g_m\}_{m\in \Z}\leq G$ acting on $\G\bk G$, we define the following quantities, measuring how deep a finite orbit penetrates into these shrinking targets:
$$d_m(x,x_0)=\min_{j\leq m}d(x g_j,x_0), \quad d_m(x,\infty)=\max_{j\leq m}d(xg_j,x_0).$$
When $\{g_m\}_{m\in \Z}$ is diagonalizable,  the results of \cite{Sullivan1982,KleinbockMargulis1999} for the cusp excursions and \cite{Galatolo07,KleinbockZhao2017} for metric balls give the following logarithm laws: 

\begin{equation}\label{e:loglawcusp}\limsup_{m\to\infty}\frac{d_m(x,\infty)}{\log(m)}=\frac{1}{n-1}, \mbox{ for a.e. $x\in \cX$}\end{equation}
and
\begin{equation}\label{e:loglawball}
\limsup_{m\to\infty}\frac{-\log(d_m(x,x_0))}{\log(m)}=\frac{1}{n},  \mbox{ for a.e. $x\in \cX$}.
\end{equation}
When $\{g_m\}_{m\in\N}$ is unipotent,  the logarithm law for cusp excursions,  \eqref{e:loglawcusp}, was established, using different methods, in \cite{Athreya2013} for hyperbolic surfaces, in \cite{KelmerMohammadi12} for hyperbolic $3$-manifolds, and in \cite{Yu16} for hyperbolic manifolds in any dimension (see also \cite{AthreyaMargulis09} for similar results for unipotent flows on the space of unimodular lattices).  To the best of our knowledge there are no similar known results on logarithm laws for unipotent flows penetrating shrinking balls. 

\begin{rem}
The results mentioned above establishing logarithm laws for cusp excursions actually consider  $\limsup_{m\to\infty}\frac{d(xg_m,x_0)}{\log(m)}$ rather than $\limsup_{m\to\infty} \frac{d_m(x,\infty)}{\log(m)}$. However, it is not hard to see that these two quantities are always the same. For the shrinking balls the two corresponding limits, $\limsup_{m\to\infty}\frac{-\log(d_m(x,x_0))}{\log(m)}$ and $\limsup_{m\to\infty}\frac{-\log(d(xg_m,x_0))}{\log(m)}$,  are also the same as long as $x_0$ is not in the orbit of $x$. 
\end{rem}

\begin{rem}
For the cusp neighborhoods, since $d(xg_m, x_0)=d(xg_t,x_0)+O(1)$ for all $t\in (m-1,m+1)$, it is possible to replace the discrete iteration by a continuous one to get that $
\limsup_{t\to\infty}\frac{d(xu_t,x_0)}{\log(t)}=\frac{1}{n-1}$.
For the shrinking balls, however, there is a fundamental difference between continuous and discrete flows. In fact,  \cite{Maucourant06} showed that for the continuous geodesic flow
$\limsup_{t\to\infty}\frac{-\log(d(xu_t,x_0))}{\log(t)}= \frac{1}{n-1},$
is strictly larger than the corresponding limit for the discrete flow. \end{rem}

Applying Theorem \ref{t:BCT} to these families of shrinking targets gives another proof for \eqref{e:loglawcusp} and establishes  \eqref{e:loglawball} also for unipotent flows. Moreover, if we apply corollary \ref{c:regular}  
(noting that the family of shrinking balls and cusp neighborhoods are spherical) we can replace the limit superior by actual limits obtaining the following strong logarithm laws: 
\begin{thm}\label{t:strongloglaw}
For any unbounded one parameter subgroup $\{g_m\}_{m\in \Z}$, we have the following strong logarithm laws: For any $x_0\in \cX$ for a.e. $x\in \cX$
\begin{equation}\label{e:strongloglaw}
\lim_{m\to\infty}\frac{d_m(x,\infty)}{\log(m)}=\frac{1}{n-1},\quad \lim_{m\to\infty}\frac{-\log(d_m(x,x_0))}{\log(m)}=\frac{1}{n},
\end{equation}
and 
for any $c<1$ we have that for a.e. $x\in \cX$ for all sufficiently large $t$,
\begin{equation*}
\#\{m\leq t: d(xg_m,x_0)\geq \tfrac{c\log(t)}{n-1}\}\asymp t^{1-c},\mbox{ and }\#\{m\leq t: d(xg_m,x_0)< \tfrac{1}{t^{\frac{c}{n}}}\}\asymp t^{1-c} 
\end{equation*}
\end{thm}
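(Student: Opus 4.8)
The plan is to read off the whole statement from Corollary~\ref{c:regular} by applying it to a few explicit, well-chosen families of spherical shrinking targets; no analytic ingredient beyond Theorems~\ref{t:BCT}--\ref{t:asymp} is needed. Fix $x_0\in\cX$, and (as is implicit whenever cusp neighbourhoods appear) assume $\cX$ is non-compact when treating the quantities $d_m(x,\infty)$. The only facts I use besides Corollary~\ref{c:regular} are those recalled just before the theorem: the families $\{B_r(x_0)\}_{r>0}$ and $\{B_r(\infty)\}_{r>0}$ consist of spherical sets, are nested and of measure tending to $0$ as $r$ increases (resp.\ decreases), and satisfy $\mu(B_r(x_0))\asymp r^n$ for $r<1$ and $\mu(B_r(\infty))\asymp e^{(1-n)r}$ for $r>1$.

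For the cusp statements I would, for each fixed $c>0$ with $c\neq 1$, set $r_c(m)=\tfrac{c}{n-1}\log m$ and put $B_m=B_{r_c(m)}(\infty)$ once $r_c(m)>1$ (and $B_m$ equal to this first admissible value for the finitely many earlier $m$). This is a family of spherical shrinking targets with $\mu(B_m)\asymp e^{(1-n)r_c(m)}=m^{-c}$, and — directly from the definitions — $\{k\le m:xg_k\in B_m\}\neq\emptyset$ says exactly $d_m(x,\infty)\ge r_c(m)$, while $\{k\le m:xg_k\in B_m\}=\emptyset$ says exactly $d_m(x,\infty)<r_c(m)$. Corollary~\ref{c:regular} with $\eta=c$ then gives, for $c<1$, that $d_m(x,\infty)\ge\tfrac{c}{n-1}\log m$ for a.e.\ $x$ and all large $m$ (so $\liminf_m\tfrac{d_m(x,\infty)}{\log m}\ge\tfrac{c}{n-1}$), and, for $c>1$, that $d_m(x,\infty)<\tfrac{c}{n-1}\log m$ for a.e.\ $x$ and all large $m$ (so $\limsup_m\tfrac{d_m(x,\infty)}{\log m}\le\tfrac{c}{n-1}$); intersecting these full-measure events over rationals $c\nearrow1$ and $c\searrow1$ gives the first limit in \eqref{e:strongloglaw}. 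For the first counting assertion I would fix $c<1$ and invoke the last clause of Corollary~\ref{c:regular}, which yields $\#\{m\le t:xg_m\in B_t\}\asymp t\mu(B_t)\asymp t^{1-c}$ for a.e.\ $x$ and all large $t$; since $xg_m\in B_t$ means $d(xg_m,x_0)\ge\tfrac{c}{n-1}\log t$, this is exactly the claim. The metric-ball case runs identically with $\rho_c(m)=m^{-c/n}$ and $B_m=B_{\rho_c(m)}(x_0)$, so $\mu(B_m)\asymp\rho_c(m)^n=m^{-c}$: now $\{k\le m:xg_k\in B_m\}\neq\emptyset$ reads $-\log d_m(x,x_0)>\tfrac cn\log m$ and its negation reads $-\log d_m(x,x_0)\le\tfrac cn\log m$, so letting $c\to1$ through the rationals gives the second limit in \eqref{e:strongloglaw} (after discarding the countable, hence null, set of $x$ for which $x_0$ lies on the orbit $\{xg_m:m\in\Z\}$, so that $d_m(x,x_0)>0$), and the $\asymp$ clause for a fixed $c<1$, together with $xg_m\in B_t\iff d(xg_m,x_0)<t^{-c/n}$, gives the second counting assertion.

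I do not expect a genuine obstacle here: the whole argument is bookkeeping around Corollary~\ref{c:regular}, whose proof contains all the real work. The one point deserving care is that in the counting assertions $t$ ranges over the reals while Corollary~\ref{c:regular} is indexed by integers; I would handle this by proving the estimates for integer $t$ first and then, for general $t$, writing $t'=\lfloor t\rfloor$ (so $\{m\le t\}=\{m\le t'\}$) and noting that $r_c(t)$ differs from $r_c(t')$, resp.\ $t^{-c/n}$ from $(t')^{-c/n}$, only by a perturbation of size $O(1/t')$, which leaves the asymptotics $\mu(B_m)\asymp m^{-c}$ intact — so applying Corollary~\ref{c:regular} to the original family and to a slightly perturbed comparison family (radius $r_c(m)$ replaced by $r_c(m)+1/m$, resp.\ $\rho_c(m)$ by $(1-1/m)\rho_c(m)$) sandwiches the desired count between two quantities each $\asymp t^{1-c}$. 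Finally, to justify citing Corollary~\ref{c:regular} in all cases I would note that $\mu(B_m)\asymp m^{-c}$ with $c<1$ automatically makes $m^{1-\eta}\mu(B_m)$ unbounded for any $\eta<1-c$, so the corollary applies uniformly for $n\ge 2$ and for both diagonalizable and unipotent flows with no extra hypothesis even when $n=2$.
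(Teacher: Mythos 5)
Your proposal is correct and follows essentially the same route as the paper: apply Corollary~\ref{c:regular} to the explicit spherical families $B_r(x_0)$ and $B_r(\infty)$ with radii chosen so that $\mu(B_m)\asymp m^{-c}$, and let $c\to1$ through a countable set. The only cosmetic difference is that for the upper bounds the paper uses the convergence half of Borel--Cantelli with exponent $1+\epsilon$ plus a short monotonicity argument (excluding the orbit of $x_0$) to pass from $d(xg_m,x_0)$ to $d_m(x,x_0)$, whereas you invoke the $\eta>1$ clause of Corollary~\ref{c:regular} directly; both are valid, and your explicit handling of non-integer $t$ and of the counting estimates merely spells out what the paper leaves implicit.
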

\begin{rem}
When the flow is diagonalizable we can use Theorem \ref{t:diag} to get the same result also when the distance function is not $K$-invariant (noting that metric balls and their complements are regular).
\end{rem}
\begin{rem}
For the cusp neighborhoods, again using the continuity of the distance function,
we can replace the discrete iteration by a continuous flow and get that 
\begin{equation}\label{e:loglawcont}
\lim_{t\to\infty}\frac{\sup\{d(xg_s,x_0): s\leq t\}}{\log(t)}=\frac{1}{n-1}.
\end{equation}
For shrinking balls, however,  the same argument as in \cite{Maucourant06} will give the upper bound
$$\limsup_{t\to\infty}\frac{\sup\{-\log(d(xg_s,x_0)): s\leq t\}}{\log(t)}\leq \frac{1}{n-1},$$
and a simple adaptation of our method for continuous flows will only give the (strictly smaller) lower bound
$$\frac{1}{n}\leq \liminf_{t\to\infty}\frac{\sup\{-\log(d(xg_s,x_0)): s\leq t\}}{\log(t)}.$$
We suspect that the correct lower bound should also be $\tfrac{1}{n-1}$, but we are not able to show this using our current method.
\end{rem}


The logarithm laws given in \eqref{e:strongloglaw} are closely related to logarithm laws for first hitting times. The first hitting time of the orbit $\{xg_m\}_{m\in \N}$  to a small neighborhood of $x_0\in \cX\cup \{\infty\}$ is given by 
$$\tau_r(x;x_0)=\min\{ m\in \N: xg_m\in B_r(x_0)\}.$$
As a direct consequence of Theorem \ref{t:strongloglaw} 
we have 
\begin{cor}\label{c:hitting}
For any one parameter subgroup $\{g_m\}_{m\in \Z}$, for any $x_0\in \cX$ and a.e. $x\in \cX$ the first hitting time function satisfies
$$\lim_{r\to \infty} \frac{\log(\tau_r(x;\infty))}{r}=n-1 \mbox{ and }
\lim_{r\to 0} \frac{\log(\tau_r(x;x_0))}{-\log r}=n.$$
\end{cor}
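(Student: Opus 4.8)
The plan is to deduce the corollary directly from the strong logarithm laws of Theorem \ref{t:strongloglaw}, using that, for a fixed point $x$, the first hitting time $\tau_r(x;x_0)$ is --- up to an off-by-one --- the inverse function of the monotone penetration-depth function $m\mapsto d_m(x,x_0)$, and likewise $\tau_r(x;\infty)$ is the inverse of $m\mapsto d_m(x,\infty)$. The starting point is the pair of elementary equivalences: since $xg_m\in B_r(\infty)$ exactly when $d(xg_m,x_0)\geq r$ and $xg_m\in B_r(x_0)$ exactly when $d(xg_m,x_0)<r$, for every $m\in\N$ one has $\tau_r(x;\infty)\leq m\iff d_m(x,\infty)\geq r$ and $\tau_r(x;x_0)\leq m\iff d_m(x,x_0)<r$.

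First I would fix $x$ in the full-measure set on which \eqref{e:strongloglaw} holds (and, for the ball statement, also discard the null set of base points whose orbit $\{xg_m\}_{m\in\Z}$ contains $x_0$, so that $d_m(x,x_0)>0$ for all $m$). For the cusp case, the function $d_m(x,\infty)$ is non-decreasing in $m$ and, by the logarithm law, tends to $\infty$; hence $\tau_r(x;\infty)$ is finite for every $r$ and $\tau_r(x;\infty)\to\infty$ as $r\to\infty$. Then, given $\epsilon\in(0,1)$, I would pick $m_0$ with $(1-\epsilon)\frac{\log m}{n-1}\leq d_m(x,\infty)\leq(1+\epsilon)\frac{\log m}{n-1}$ for all $m\geq m_0$, and for $r$ large enough that $m:=\tau_r(x;\infty)>m_0$ feed the relations $d_m(x,\infty)\geq r$ and $d_{m-1}(x,\infty)<r$ into this two-sided bound (using $\log(m-1)\geq\log m-\log 2$) to sandwich $\log\tau_r(x;\infty)$ between $\frac{(n-1)r}{1+\epsilon}$ and $\frac{(n-1)r}{1-\epsilon}+\log 2$. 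Dividing by $r$, letting $r\to\infty$ and then $\epsilon\to 0$ then gives $\lim_{r\to\infty}\frac{\log\tau_r(x;\infty)}{r}=n-1$.

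The ball case runs in exactly the same way: $d_m(x,x_0)$ is non-increasing and tends to $0$ by the logarithm law, so $\tau_r(x;x_0)$ is finite for all $r>0$ and $\to\infty$ as $r\to 0$; writing $m=\tau_r(x;x_0)$ one has $d_m(x,x_0)<r\leq d_{m-1}(x,x_0)$, and inserting $(1-\epsilon)\frac{\log m}{n}\leq-\log d_m(x,x_0)\leq(1+\epsilon)\frac{\log m}{n}$ sandwiches $\log\tau_r(x;x_0)$ between $\frac{n(-\log r)}{1+\epsilon}$ and $\frac{n(-\log r)}{1-\epsilon}+\log 2$, which after dividing by $-\log r$ and taking the limits yields $\lim_{r\to 0}\frac{\log\tau_r(x;x_0)}{-\log r}=n$. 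I do not anticipate any real obstacle: the only mild points are the harmless off-by-one shift between $\tau_r$ and $\tau_r-1$ (absorbed into the additive $\log 2$, which is negligible against the linear growth in $r$, resp. $-\log r$) and the removal of the measure-zero set of $x$ lying on the (two-sided) orbit of $x_0$.
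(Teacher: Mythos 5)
Your proof is correct and follows essentially the same route as the paper: deduce the corollary from the strong logarithm laws of Theorem \ref{t:strongloglaw} by inverting the monotone penetration-depth functions $m\mapsto d_m(x,x_0)$ and $m\mapsto d_m(x,\infty)$. The only difference is that the paper outsources this inversion to \cite[Proposition 11]{GalatoloPeterlongo10} (with an ``obvious modification'' for the cusp case), whereas you carry out the elementary sandwich argument directly, which makes the deduction self-contained; the off-by-one and orbit-of-$x_0$ issues you flag are handled correctly.
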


\subsection{Multiparameter actions}
The maximal unipotent subgroup of $G=\SO_0(n,1)$ is isomorphic to $\R^{n-1}$. Instead of a one parameter flows we can also consider multi-parameter actions, given by the action of a discrete subgroup $H$ of the unipotent group (that is isomorphic to $\Z^d$ for some $1\leq d\leq n-1$). We will actually state and prove most of our results for unipotent actions in this generality. The restriction of these results to the case of $d=1$ will then give the above results for one parameter unipotent flows.

\subsection{Other homogenous spaces}
The methods introduced here to attack shrinking target problems for discrete time flows on hyperbolic manifolds can be adapted to more general homogenous space. In fact, in forthcoming joint work with Yu \cite{KelmerYu17} we use these methods to obtain similar results for one parameter flows on homogenous spaces $\G\bk G$ with $G$ any non-compact simple group, and $\G\leq G$ a lattice. When $G$ is of rank one the arguments are very similar to the ones used here, while for groups of higher rank, the detailed spectral analysis used here is replaced with more robust arguments using an effective version of property $(T)$.

\subsection*{Acknowledgments} 
We thank Peter Sarnak and Dimitry Kleinbock for helpful discussions and Shucheng Yu for his comments. 

\section{Preliminaries}

\subsection{Coordinates}\label{s:coordinates}

Let $G=\SO_0(n,1)$ and $K\leq G$ a maximal compact subgroup (so $K\cong \SO(n)$).
Let $G=NAK$ be an Iwasawa decomposition of $G$, with $A$ a maximal
diagonalizable subgroup, and $N$ a maximal unipotent subgroup.
Let $\fa$ denote the Lie algebra of $A$ and let $\fa_\C^*$
denote its complexified dual (one dimensional since $G$ is of rank one).
Fix a positive Weyl chamber, denote by $\rho$ the half sum of the positive
roots, and fix a positive $X_0\in \fa$ of norm one with respect to the
Killing form.  We now identify $\fa_\C^*$ with $\C$ via their values
at $X_0$ and, slightly abusing notation, write $\rho=\rho(X_0)\in \R$ so that
$\rho=\frac{n-1}{2}$.

It is sometimes convenient to work with explicit coordinates, and we realize $G$ as the subgroup of $\SL_{n+1}(\R)$ of all matrices preserving the form $x_1^2+\ldots+x_n^2-x_{n+1}^2$. With this realization a general element of $A$ is of the form 
\begin{equation}\label{e:Cartan}a_t=\left(\begin{matrix} I_{n-1} &0 &0\\ 
0& \cosh(t) & \sinh(t)\\  0& \sinh(t) & \cosh(t)\end{matrix}\right),\quad t\in \R\end{equation}
an element of $N$ is of the form 
\begin{equation}\label{e:unipotent}
n_x=\left(\begin{matrix} I_{n-1} & x & x\\ -x^t & 1-\tfrac{\|x\|^2}{2}& \tfrac{\|x\|^2}{2}\\ x^t & -\tfrac{\|x\|^2}{2} & 1+\tfrac{\|x\|^2}{2}\end{matrix}\right),\quad x\in \R^{n-1}\end{equation}
and an element of $K$ is of the form 
$k=\begin{pmatrix} k' & 0\\ 0 &1\end{pmatrix}$ with  $k'\in \SO(n).$

The  Iwasawa decomposition $G=NAK$ gives us coordinates $g=n_xa_tk$, and the Haar measure in these coordinates is given by
$$dg=e^{-2\rho t}dxdtdk,$$
where $dk$ is the probability Haar measure on $K$.
The Cartan decomposition $G=KA^+K$ with $A^+=\{a_t: t\geq 0\}$ gives us another parametrization $g=k_1a_{t}k_2$.
Here $k_1,k_2$ are not uniquely defined but $a_t$ is and the Haar measure of $G$ is given in these coordinates by
$$dg=(2\sinh(t))^{2\rho}dk_1dk_2dt.$$

%
\subsection{Sobolev Norms}\label{s:norms}
Fix a basis $\sB$ for the Lie algebra $\mathfrak{g}$ of $G$. Given a smooth test function $\psi\in C^{\infty}(\G\bk G)$,  define the ``$L^{p}$, order-$d$'' Sobolev norm $\cS_{p,d}(\psi)$  as
$$
\cS_{p,d}(\psi) \ : = \ \sum_{\ord(\sD)\le d}\|\sD\psi\|_{L^{p}(\G\bk G)}
.
$$
Here $\sD$ ranges over monomials in $\sB$ of order at most $d$ and $\sD$ acts on $\psi$ by left differentiation 
(e.g., $X\psi(g)=\frac{d}{dt}(\psi(ge^{tX}))|_{t=0}$). This definition depends on the basis, however, changing the basis $\sB$ only distorts $\cS_{p,d}$ by a bounded factor.

\subsection{Spectral theory}\label{s:spetral}
Denote by $\hat{G}$ the unitary dual of $G$ and by
$\hat{G}^1$ the spherical dual.
We parameterize $\hat{G}^1$ by $\fa_\C^*/W$ where $W$ is the Weyl group.
With this parametrization the tempered representations lie in $i\fa^*_\R$
and the non-tempered representations are contained in $(0,\rho]$.
We use the notation
\[\hat{G}^1=\{\pi_s|s\in i\R^+\cup [0,\rho]\},\]
where the representations $\pi_s, s\in i\R^+$ are the (tempered)
principal series representations, the representations
$\pi_s, s\in (0,\rho)$ are the (non-tempered) complementary series,
and $\pi_\rho$ is the trivial representation. We note that in each representation $\pi_s\in \hat{G}^1$ there is a unique spherical vector up to scaling.

We recall the relation between Laplace eigenvalues and irreducible
representations.  Let $\Omega$ be the Casimir operator of $G$,
which acts on any irreducible representation $V_\pi$ by
scalar multiplication $\Omega v+\lambda(\pi)v=0$.
We may normalize $\Omega$ so that the restriction of $\Omega$ to the space
of right-$K$-invariant functions on $G$ coincides with
the Laplace-Beltrami operator $\triangle$ on $\bH^n=G/K$.
With this normalization we have $\lambda(\pi_s)=\rho^2-s^2$.

\subsection{Spectral decomposition}
For any lattice $\G\leq G$ let $\mu$ denote the $G$ invariant probability measure on $\G\bk G$ and let $\pi$ denote the right regular representation of $G$ on $L^2(\G\bk G)$.
Let $L^2_0(\G\bk G)$ denote the subspace of $L^2(\G\bk G)$ orthogonal to the constant function and let  $L^2_{\mathrm{tmp}}(\G\bk G)\subseteq L^2(\G\bk G)$ be the subspace that weakly contains only tempered representations. We then have a spectral decomposition for any $f\in L^2(\G\bk G/K)$ 
$$f=\langle f,1\rangle+\sum_{k}\langle f,\vf_k\rangle\vf_k +f_0,$$
with $f_0\in L^2_{\mathrm{tmp}}(\G\bk G)$ and $\vf_k\in \pi_{s_k}$ with $s_k\in (0,\rho)$. 
The exceptional forms $\vf_k\in \pi_{s_k}$ above are either cusp forms (vanishing at all cusps), or residual forms (obtained as residues of Eisenstein series).

The Eisenstein series (corresponding to a cusp at infinity) is defined by 
$$E(s,g)=\sum_{\G_\infty\bk \G} e^{-st(\g g)},$$
where $\G_\infty=\G\cap N$ and  $t(g)$ is defined by the relation $g=n_xa_{t(g)}k$.
Similarly, given a different cusp $\fa$, corresponding to a conjugate $N^\fa=\tau_\fa N\tau_\fa^{-1}$ with $\G_\fa=\G\cap N^\fa$ a lattice in $N^\fa$,  the corresponding Eisenstein series is defined as
$$E_\fa(s,g)=\sum_{\G_\fa\bk \G} e^{-st(\tau_\fa\g g)}.$$
These series converge for $\Re(s)>n$ and have meromorphc continuations to the whole complex plane, which are analytic in the half plane $\Re(s)>\rho$, except for a pole at $s=2\rho$ and perhaps finitely many exceptional poles in the interval $[\rho,2\rho)$. 
The residue of the Eisenstein series at an exceptional pole $\sigma\in [\rho,2\rho)$ 
 $$\vf_\sigma(g)=\Res_{s=\sigma}E_\fa(s,g),$$
 is always in $L^2(\G\bk G)$ and generates the complementary representation $\pi_{\sigma-\rho}\in \hat{G}^1$. The fact that residual forms are square integrable can be seen by looking at the Fourier expansion of the Eisenstein series with respect to the different cusps, showing that a residual forms corresponding to a pole at $\sigma\in [\rho,2\rho)$ grows at the cusp at infinity like
 $$\vf_\sigma(na_tk)\asymp e^{(2\rho-\sigma)t},$$ 
 (and similarly at other cusps). From these asymptotics we get the following stronger statement:
 \begin{lem}
 For any $\sigma\in [\rho, 2\rho]$  the corresponding residual form satisfies that $\vf_\sigma\in L^p(\G\bk G)$ for any $p<\frac{2\rho}{2\rho-\sigma}$.
 \end{lem}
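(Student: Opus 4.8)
The plan is to deduce the $L^p$ integrability of a residual form $\vf_\sigma$ from the stated growth asymptotics $\vf_\sigma(na_tk)\asymp e^{(2\rho-\sigma)t}$ at each cusp, together with the standard description of a fundamental domain for $\G\bk G$ as a union of a compact piece and finitely many cuspidal regions (Siegel sets). First I would fix a cusp $\fa$ and work in the coordinates $g=n_xa_tk$ adapted to it; the relevant Haar measure is $dg=e^{-2\rho t}\,dx\,dt\,dk$. A Siegel set near the cusp has the form $\{n_xa_tk: x\in F_\fa,\ t>T\}$ where $F_\fa$ is a fundamental domain for $\G_\fa\bk N^\fa$ (compact since $\G_\fa$ is a lattice in $N^\fa\cong\R^{n-1}$), $T$ is large, and the projection to $\G\bk G$ is injective on this set. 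On such a region, $|\vf_\sigma(n_xa_tk)|^p\ll e^{(2\rho-\sigma)pt}$, so
$$\int_{\text{cusp }\fa}|\vf_\sigma|^p\,d\mu\ll \int_{F_\fa}\int_T^\infty\int_K e^{(2\rho-\sigma)pt}e^{-2\rho t}\,dk\,dt\,dx\ll\int_T^\infty e^{((2\rho-\sigma)p-2\rho)t}\,dt,$$
which converges precisely when $(2\rho-\sigma)p-2\rho<0$, i.e. when $p<\frac{2\rho}{2\rho-\sigma}$. (When $\sigma=2\rho$ the bound is $\vf_\sigma\asymp 1$ and integrability holds for all $p$, consistent with the convention $\frac{2\rho}{2\rho-\sigma}=\infty$; when $\sigma=\rho$ the threshold is $p<2$.) Summing the finitely many cuspidal contributions and adding the trivially finite contribution from the compact part of the fundamental domain gives $\vf_\sigma\in L^p(\G\bk G)$ for all $p<\frac{2\rho}{2\rho-\sigma}$.

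I would organize this into three steps: (i) recall/cite the reduction theory giving $\G\bk G=\Omega_0\cup\bigcup_{\fa}\Sigma_\fa$ with $\Omega_0$ relatively compact and each $\Sigma_\fa$ a Siegel set on which the covering map is injective and on which $t\to\infty$; (ii) on each $\Sigma_\fa$ insert the growth estimate $\vf_\sigma(na_tk)\asymp e^{(2\rho-\sigma)t}$ (already established in the excerpt via the Fourier expansion of the Eisenstein series at the various cusps) and carry out the elementary integral above in the coordinates $dg=e^{-2\rho t}\,dx\,dt\,dk$; (iii) note $|\vf_\sigma|$ is bounded on $\Omega_0$ by continuity, so that piece contributes a finite amount to any $L^p$ norm, and conclude.

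The only genuine subtlety — not really an obstacle but the point requiring care — is making sure the growth asymptotic is applied uniformly across all cusps and that the Siegel sets are chosen to cover $\G\bk G$ with controlled overlap; both are standard facts from the reduction theory of lattices in rank-one groups, and the excerpt has already asserted the matching growth rate at every cusp, so nothing new is needed. A minor bookkeeping point is the behavior at the endpoints $\sigma=\rho$ and $\sigma=2\rho$: at $\sigma=2\rho$ one should note $\vf_{2\rho}$ is (proportional to) the constant function, lying in every $L^p$; for $\rho<\sigma<2\rho$ the integral is a clean exponential and the stated open condition $p<\frac{2\rho}{2\rho-\sigma}$ is exactly the convergence threshold, with the borderline $p=\frac{2\rho}{2\rho-\sigma}$ excluded because the integral $\int_T^\infty 1\,dt$ diverges. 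This completes the proof.
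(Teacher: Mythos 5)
Your argument is correct and is essentially the paper's own proof: decompose a fundamental domain into a compact part plus cusp neighborhoods, insert the growth asymptotic $\vf_\sigma(na_tk)\asymp e^{(2\rho-\sigma)t}$, and compute $\int^\infty e^{((2\rho-\sigma)p-2\rho)t}\,dt$ against the Haar measure $e^{-2\rho t}\,dx\,dt\,dk$, which converges exactly for $p<\frac{2\rho}{2\rho-\sigma}$. Your extra remarks on Siegel sets and the endpoint cases are fine but add nothing beyond the paper's argument.
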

 \begin{proof}
 Fixing a fundamental domain for $\G\bk G$ and decomposing it into a compact part and finitely many cusps it is enough to show that $|\vf_\sigma|^p$ is integrable on the cusp neighborhoods. We will show this for the neighborhood of the cusp at infinity of the form  
 $\cF_\infty=\{na_{t} k: n\in \G_\infty \bk N,\; t\geq 1,\; k\in K\},$
 (other cusps are analogous). Since $\G_\infty \bk N$ and $K$ are compact it is enough to show that 
 $\int_{t_0}^\infty |\vf_\sigma(a_t)|^pe^{-2\rho t} dt<\infty$ and using the growth at the cusp this is the same as 
$\int_{t_0}^\infty e^{((2\rho-\sigma)p-2\rho)t}dt<\infty$ which holds for all $p<\frac{2\rho}{2\rho-\sigma}$.

 \end{proof}
 
 \subsection{Decay of matrix coefficients}
For each $s\in i\R^+\cup [0,\rho]$ consider the \emph{spherical function}
defined by
\begin{equation}\label{e:spherical}
\phi_s(g)=\langle \pi_s(g)v,v\rangle
\end{equation}
where $v\in V_{\pi_s}$ is a normalized spherical vector.
The asymptotic behavior of this function is well-known.
For $s\in(0,\rho]$ the function $\phi_s$ is positive and decays like
\[\phi_s(\exp(tX_0))\asymp C_s e^{(s-\rho)t}.\]
For $s\in i\R$, $\phi_s$ is oscillatory and decays like
\[|\phi_{s}(\exp(tX_0))|\ll  t e^{-\rho t}.\]
In particular, applying this for the regular representation on $L^2(\G\bk G)$, recalling our spectral decomposition, we get the following:
\begin{prop}\label{p:smatrixdecay}
For any spherical tempered $\psi,\phi\in L^2_\mathrm{temp}(\G\bk G)$ 
$$|\langle \pi(k_1a_tk_2)\psi,\phi\rangle| \ll t \norm{\psi}\norm{\phi} e^{-\rho t}.$$
While for spherical $\vf\in \pi_{s}$ with $s\in (0,\rho]$ we have 
 $$|\langle \pi(k_1a_tk_2)\vf,\vf\rangle| \ll_s \norm{\vf}^2 e^{(s-\rho) t}.$$
\end{prop}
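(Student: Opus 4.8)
The plan is to read off both estimates from the spherical function asymptotics recalled above, via the spectral decomposition, with essentially no new analytic input. The first step is to eliminate the $K$-factors: since $\pi$ is the right regular representation and $\psi,\phi$ (resp.\ $\vf$) are spherical, $\pi(k)$ fixes each of them for every $k\in K$, so $\pi(k_1a_tk_2)\psi=\pi(k_1)\pi(a_t)\psi$, and moving $\pi(k_1)^{*}=\pi(k_1^{-1})$ onto the (also spherical) second argument gives
$$\langle \pi(k_1a_tk_2)\psi,\phi\rangle=\langle \pi(a_t)\psi,\phi\rangle ,$$
and likewise $\langle \pi(k_1a_tk_2)\vf,\vf\rangle=\langle \pi(a_t)\vf,\vf\rangle$. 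Thus it suffices to estimate matrix coefficients along the Cartan element $a_t$, using the asymptotics of the spherical functions $\phi_s$ recalled above. For the complementary bound this is already the whole story: if $\vf$ is spherical and lies in a subrepresentation isomorphic to $\pi_s$ with $s\in(0,\rho]$, then by uniqueness of the spherical vector $\vf=\norm{\vf}\,v$ for a unit spherical $v\in\pi_s$, whence $\langle\pi(a_t)\vf,\vf\rangle=\norm{\vf}^2\phi_s(a_t)$, and $\phi_s(a_t)\asymp C_se^{(s-\rho)t}$ gives $|\langle\pi(a_t)\vf,\vf\rangle|\ll_s\norm{\vf}^2 e^{(s-\rho)t}$, with the constant depending on $s$ through $C_s$.

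For the tempered estimate I would decompose the spherical part of $L^2_{\mathrm{temp}}(\G\bk G)$ as a direct integral $\int^{\oplus}\pi_s\,d\nu(s)$ over the tempered spherical spectrum $s\in i\R$ (the tempered cuspidal representations contribute a discrete piece treated identically). Writing $v_s$ for the unit spherical vector of $\pi_s$, a spherical $\psi\in L^2_{\mathrm{temp}}$ corresponds to a section $s\mapsto c_sv_s$ with $\norm{\psi}^2=\int|c_s|^2\,d\nu(s)$, and similarly $\phi\leftrightarrow d_sv_s$; since $\langle\pi_s(a_t)v_s,v_s\rangle=\phi_s(a_t)$ this yields
$$\langle\pi(a_t)\psi,\phi\rangle=\int c_s\overline{d_s}\,\phi_s(a_t)\,d\nu(s) .$$
I would then insert the bound $|\phi_s(a_t)|\ll (1+t)e^{-\rho t}$, \emph{uniformly} in $s\in i\R$, pull it out of the integral, and apply Cauchy--Schwarz to the remaining $\int|c_s||d_s|\,d\nu(s)\le\norm{\psi}\norm{\phi}$, obtaining $|\langle\pi(a_t)\psi,\phi\rangle|\ll(1+t)e^{-\rho t}\norm{\psi}\norm{\phi}$; in the regime $t\ge1$ used in the sequel $1+t\asymp t$, which is the stated form.

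The only step here that is not purely formal is the uniformity in $s$ of the tempered decay $|\phi_s(a_t)|\ll t e^{-\rho t}$: without a constant independent of $s$ one cannot pull the bound outside the direct integral. This is standard — it follows from Harish-Chandra's domination $|\phi_s|\le\Xi$ on $A^+$ for $s\in i\R$ together with the rank-one asymptotic $\Xi(a_t)\asymp(1+t)e^{-\rho t}$ — so nothing genuinely new has to be proved, but it is what makes the argument go through. A secondary, routine point is that the spherical subspace of $L^2_{\mathrm{temp}}$ really does decompose as the direct integral of the one-dimensional spherical lines $\C v_s$, which is the content of the spherical (Selberg) transform and can be quoted.
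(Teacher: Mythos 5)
Your argument is correct and is essentially the paper's own (implicit) proof: the paper deduces the proposition directly from the stated spherical function asymptotics applied through the spectral decomposition of the spherical part of $L^2(\G\bk G)$, exactly as you do, with the uniformity in $s\in i\R$ (via Harish-Chandra's $\Xi$-function bound) being the standard fact that makes the tempered case go through. No further comparison is needed.
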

%
Similar results also hold for non-spherical $K$-finite vectors (see e.g. \cite{Shalom2000}). More generally, by decomposing a smooth vector into its different $K$ types, we can bound matrix coefficients of general smooth vectors. However, in order to insure that the contribution from all $K$-types converges, we need to replace the $L^2$-norms by suitable Sobolev norms. We thus get the following (see e.g., \cite[Proposition 3.9]{MohammadiOh15})
\begin{prop}\label{p:matrixdecay}
There is some $l\geq 1$ (depending only on $G$ and $K$) and $\alpha>0$ depending on the spectral gap of $\G$, such that  for any smooth $\psi,\phi\in L^2_0(\G\bk G)$ 
$$|\langle \pi(k_1a_tk_2)\psi,\phi\rangle| \ll \cS_{2,l}(\psi)\cS_{2,l}(\phi) e^{-\alpha t}.$$
\end{prop}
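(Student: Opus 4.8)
The plan is to reduce the estimate to the classical decay bounds for matrix coefficients of $K$-finite vectors and then to recover the Sobolev-norm formulation by summing over $K$-types, converting the growth in the $K$-parameter into regularity via the Casimir operator $\Omega_K$ of $K$. First I would decompose $\psi=\sum_{\sigma\in\hat K}\psi_\sigma$ and $\phi=\sum_{\tau\in\hat K}\phi_\tau$ into isotypic components for the right $K$-action; these series converge because $\psi,\phi$ are smooth. Using unitarity of $\pi(k_1)$ and $\pi(k_2)$ one writes $\langle\pi(k_1a_tk_2)\psi,\phi\rangle=\sum_{\sigma,\tau}\langle\pi(a_t)\pi(k_2)\psi_\sigma,\pi(k_1^{-1})\phi_\tau\rangle$, where $\pi(k_2)\psi_\sigma$ still lies in a $\sigma$-isotypic subspace, $\pi(k_1^{-1})\phi_\tau$ in a $\tau$-isotypic subspace, and $\|\pi(k_2)\psi_\sigma\|=\|\psi_\sigma\|$, $\|\pi(k_1^{-1})\phi_\tau\|=\|\phi_\tau\|$. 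So it suffices to bound $|\langle\pi(a_t)v,w\rangle|$ for $K$-finite vectors $v,w$ of $K$-types $\sigma,\tau$, keeping careful track of the dependence on $\sigma$ and $\tau$.

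For this I would invoke the standard estimates for spherical functions and matrix coefficients on $G=\SO_0(n,1)$. Decomposing $L^2_0(\G\bk G)$ as a direct integral of irreducibles, in each fiber the $\sigma$-isotypic multiplicity is bounded polynomially in $\langle\sigma\rangle:=1+\lambda_\sigma$, where $\lambda_\sigma\ge 0$ is the eigenvalue of $-\Omega_K$ on $\sigma$; the fiberwise matrix coefficients of unit vectors are dominated by the Harish-Chandra function $\Xi$ on the tempered part and by $e^{(s_k-\rho)t}$ on the finitely many complementary fibers $\pi_{s_k}$ occurring in $L^2_0(\G\bk G)$. Since $\Xi(a_t)\ll(1+t)e^{-\rho t}$ and the spectral gap of $\G$ gives $s_k\le s_0$ for some $s_0<\rho$, a Cauchy--Schwarz over the direct integral produces a constant $b=b(G,K)$ with
\[
|\langle\pi(a_t)v,w\rangle|\ \ll\ (1+t)\,\langle\sigma\rangle^{b}\langle\tau\rangle^{b}\,\|v\|\,\|w\|\,e^{-(\rho-s_0)t}.
\]
This step — exponential decay at a rate governed by the spectral gap together with \emph{polynomial} control of the implied constant in the $K$-types — is the only genuinely representation-theoretic input and is the main obstacle; it is precisely the content of the cited \cite[Proposition 3.9]{MohammadiOh15} (see also \cite{Shalom2000}), so I would quote it rather than reprove it.

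Finally I would sum over $K$-types. For any $m\ge1$, since $\Omega_K$ commutes with the right $K$-action we have $\psi_\sigma=\langle\sigma\rangle^{-m}(1-\Omega_K)^m\psi_\sigma$, and since $(1-\Omega_K)^m\psi_\sigma$ is a sub-component of $(1-\Omega_K)^m\psi$, expanding $(1-\Omega_K)^m$ as a left-invariant differential operator of order $2m$ gives $\|\psi_\sigma\|\le\langle\sigma\rangle^{-m}\|(1-\Omega_K)^m\psi\|\ll\langle\sigma\rangle^{-m}\cS_{2,2m}(\psi)$, and likewise $\|\phi_\tau\|\ll\langle\tau\rangle^{-m}\cS_{2,2m}(\phi)$. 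Because $\#\{\sigma\in\hat K:\langle\sigma\rangle\le R\}$ grows polynomially in $R$, the series $\sum_\sigma\langle\sigma\rangle^{b-m}$ converges once $m$ is large enough depending only on $G$ and $K$. Fixing such an $m$, summing the displayed bound over $\sigma,\tau$, and absorbing the factor $1+t$ via $(1+t)e^{-(\rho-s_0)t}\ll_\alpha e^{-\alpha t}$ for any fixed $\alpha<\rho-s_0$, yields
\[
|\langle\pi(k_1a_tk_2)\psi,\phi\rangle|\ \ll\ \cS_{2,2m}(\psi)\,\cS_{2,2m}(\phi)\,e^{-\alpha t},
\]
which is the assertion with $l=2m$ and $\alpha$ any number strictly below $\rho-s_0$.
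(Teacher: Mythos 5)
Your argument is sound and follows the same route the paper takes: the paper does not prove this proposition but quotes it from \cite[Proposition 3.9]{MohammadiOh15} (see also \cite{Shalom2000}), indicating exactly the strategy you spell out, namely decomposing smooth vectors into $K$-types, using the $K$-finite decay bounds with polynomial dependence on the $K$-type, and summing the contributions with the Sobolev norm (via $\Omega_K$) absorbing the growth in the $K$-parameter. Since your one genuinely representation-theoretic input is the same cited estimate, your write-up is essentially an expanded version of the paper's justification and is correct.
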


In order to apply this decay to matrix coefficients of general unipotent and diagonalizable elements  we need to consider their Cartan decomposition given by the following two lemmas. 
\begin{lem}\label{l:kakunipotent}
We have $n_x=k_1a_tk_2$ with $t\asymp 2\log\|x\|_\infty$ uniformly for $x\in \R^{n-1}$ with $\|x\|_\infty\geq 1$.
\end{lem}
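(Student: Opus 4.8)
The plan is to compute the Cartan projection of $n_x$ explicitly using the standard fact that for $g\in G\subseteq\SL_{n+1}(\R)$ the Cartan component $a_t$ is determined by the operator norm (equivalently, the largest singular value) of $g$ acting on $\R^{n+1}$. Concretely, if $g=k_1a_tk_2$ with the realization of $A$ as in \eqref{e:Cartan}, then $a_t$ has singular values $e^{t},e^{-t}$ together with $1$ (with multiplicity $n-1$), so $\|g\|_{\mathrm{op}}=e^{t}$ (for $t\geq 0$), and hence $t=\log\|g\|_{\mathrm{op}}$. Thus it suffices to estimate $\|n_x\|_{\mathrm{op}}$ for $\|x\|_\infty\geq 1$.

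First I would read off the matrix \eqref{e:unipotent} and estimate its largest entry: the entries of $n_x$ are either bounded (the $I_{n-1}$ block and the $1$'s), linear in $\|x\|$ (the $\pm x$, $\pm x^t$ blocks), or equal to $\pm\tfrac{\|x\|^2}{2}$ or $1\pm\tfrac{\|x\|^2}{2}$. Since all entries are $O(\|x\|^2)$ and at least one entry has absolute value $\asymp\|x\|^2$ when $\|x\|\geq 1$, a crude bound gives $\|x\|^2/2 \le \|n_x\|_{\mathrm{op}} \le C\|x\|^2$ for an absolute constant $C$ (using e.g. that the operator norm is comparable to the max-entry norm up to a factor depending only on the dimension $n+1$). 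Taking logarithms and using $\log\|x\| = \log\|x\|_\infty + O(1)$ (norm equivalence on $\R^{n-1}$), we get $t = \log\|n_x\|_{\mathrm{op}} = 2\log\|x\|_\infty + O(1)$ uniformly for $\|x\|_\infty\geq 1$, which is the claimed $t\asymp 2\log\|x\|_\infty$ in the paper's $\asymp$ convention for the additive-logarithmic statement. I would also note that $\|x\|_\infty\ge 1$ guarantees $t$ is bounded below by a positive constant, so staying in the positive Weyl chamber $A^+$ is not an issue.

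The only mildly delicate point is making the comparison between the operator norm and the entrywise estimate clean and dimension-uniform; this is routine linear algebra (the ratio of any two norms on the finite-dimensional space of $(n+1)\times(n+1)$ matrices is bounded), so I do not expect a genuine obstacle. If one prefers to avoid even this, an alternative is to compute $n_x^t n_x$ directly and exhibit its top eigenvalue as $\asymp\|x\|^4$, but the operator-norm argument is cleaner. In either case the conclusion follows immediately upon taking logarithms.
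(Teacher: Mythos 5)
Your proposal is correct and is essentially the paper's argument: both identify the Cartan component $t$ by evaluating a bi-$K$-invariant matrix norm of the explicit matrix $n_x$ and of $a_t$, the only difference being that you use the operator norm (largest singular value, estimated via the maximal entry $1+\tfrac{\|x\|^2}{2}$) while the paper computes the Hilbert--Schmidt quantity $\tr(g^tg)$ exactly, giving $2\cosh(2t)=2+4\|x\|_2^2+\|x\|_2^4$. Either way one gets $t=2\log\|x\|_\infty+O(1)$ uniformly for $\|x\|_\infty\geq 1$, as claimed.
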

\begin{proof}
Consider the Hilbert Schmidt norm $\|g\|=\tr(g^tg)$. On one hand looking at sum of squares of coefficients we get
$\|n_x\|=n+1+4\|x\|_2^2+\|x\|_2^4,$
and on the other hand writing $n_x=k_1a_tk_2$ we see $\|n_x\|=\tr(a_{2t})=n-1+2\cosh(2t)$ so 
$$2\cosh(2t)=2+4\|x\|_2^2+\|x\|_2^4.$$
We have $\|x\|_2\asymp \|x\|_\infty$ and hence $t\asymp 2\log( \norm{x}_\infty)$.
\end{proof}

\begin{lem}\label{l:kakdiag}
For any diagonalizable one parameter group $\{g_m\}_{m\in \Z}$ there is some $c>0$ so that  
$g_m=k_1a_{t_m}k_2$ with $t_m=cm+O(1)$ for all $m\geq 1$.
\end{lem}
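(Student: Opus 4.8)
The plan is to reduce, up to a bounded error, to the Cartan decomposition of elements of $A$, and then to read off $t_m$ from the Hilbert--Schmidt norm of $g_m$ exactly as in the proof of Lemma \ref{l:kakunipotent}. The first step is structural. Since $\{g_m\}_{m\in\Z}$ is a discrete one parameter group it is generated by $g_1$, so $g_m=g_1^m$, and since it is diagonalizable and unbounded, $g_1$ is a hyperbolic element. As $G=\SO_0(n,1)$ has real rank one, such a $g_1$ is conjugate to an element of $AM$, where $M=Z_K(A)\cong\SO(n-1)$ denotes the centralizer of $A$ in $K$: after conjugating the hyperbolic part into $A$, the remaining (elliptic) part commutes with $a_c$ and hence lies in $Z_G(A)=AM$, and being contained in a compact subgroup it must lie in $M$. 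Thus $g_1=h\,a_c\,m_0\,h^{-1}$ for some $h\in G$, some $c>0$ (positive because $\{g_m\}$ is unbounded), and some $m_0\in M$. Since $m_0$ commutes with $a_c$, this gives $g_m=h\,a_{cm}\,m_0^m\,h^{-1}$ for all $m\in\Z$.

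For the second step, as in Lemma \ref{l:kakunipotent}, I would work with the Hilbert--Schmidt norm $\|g\|=\tr(g^tg)$. Writing $g_m=k_1 a_{t_m} k_2$ with $t_m\ge 0$ gives $\|g_m\|=\tr(a_{2t_m})=(n-1)+2\cosh(2t_m)$. On the other hand, conjugation by the fixed element $h$ distorts $\|\cdot\|$ by at most a multiplicative constant depending only on $h$ (since $\|AB\|$ is controlled by $\|A\|$ times the operator norm of $B$ and vice versa), and multiplication by the orthogonal matrix $m_0^m$ leaves $\|\cdot\|$ unchanged, so $\|g_m\|\asymp\|a_{cm}m_0^m\|=\|a_{cm}\|=(n-1)+2\cosh(2cm)$ uniformly in $m$. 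Comparing the two expressions yields $\cosh(2t_m)\asymp\cosh(2cm)$ for all $m\ge 1$, and since $\tfrac12 e^{2s}\le\cosh(2s)\le e^{2s}$ for $s\ge 0$ we get $e^{2t_m}\asymp e^{2cm}$, i.e.\ $t_m=cm+O(1)$ for all $m\ge 1$; the finitely many small values of $m$ (where the asymptotics of $\cosh$ are not yet tight) are absorbed into the implied constant.

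The only point requiring any thought is the first step: isolating the possible elliptic $M$-component of $g_1$ and keeping track of the conjugating element $h$, and checking that neither of these perturbs $t_m$ by more than $O(1)$. The Hilbert--Schmidt norm computation handles this cleanly precisely because conjugation and multiplication by orthogonal matrices change $\|\cdot\|$ only by bounded multiplicative factors; everything else is the same routine computation as in Lemma \ref{l:kakunipotent}.
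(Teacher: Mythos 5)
Your proposal is correct and follows essentially the same route as the paper: conjugate $g_1$ to $a_c m_0$ with $m_0\in M=Z_K(A)$ commuting with $A$, and read off $t_m$ by comparing $\tr(g_m^tg_m)=n-1+2\cosh(2t_m)$ with the corresponding quantity for $a_{cm}$. The only (harmless) difference is that you absorb the conjugating element via norm submultiplicativity and orthogonal invariance of the Hilbert--Schmidt norm, whereas the paper decomposes $\tau=an_xk$ and computes $\|n_{-x}a_{mc}n_x\|$ explicitly; both yield $t_m=cm+O(1)$.
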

\begin{proof}
We can conjugate $g_1=\tau^{-1}\tilde{k}a_c\tau$ with $\tau\in G$ and $\tilde{k}\in K$  commuting with any $a\in A$. 
Decomposing $\tau=an_x k$ we see that $g_m=k^{-1}n_{-x}\tilde{k}^ma_{mc}n_xk$ and taking the Hilbert Schmidt norm we get that 
$$\|g_m\|^2= \|n_{-x}\tilde{k}^ma_{mc}n_x\|^2\asymp_x \|n_{-x}a_{mc}n_x\|^2,$$
where we used that $\|n_{-x}\tilde{k}^mn_x\|_2\asymp 1$ is uniformly bounded and bounded away from zero.
Next, a simple computation using \eqref{e:Cartan} and \eqref{e:unipotent} to compute the sum of squares of entries of $n_{-x}a_{mc}n_x$ gives 
  $ \|n_{-x}a_{mc}n_x\|^2\asymp_x e^{2mc}$. On the other hand, writing $g_m=k_1a_{t_m}k_2$ we have that 
  $\|g_n\|^2=n-1+2\cosh(2t_m)$. Comparing the two expressions we see that  $e^{2t_m}\asymp e^{2mc}$ so that $t_m=cm+O(1)$ as claimed.
\end{proof}

\section{Shrinking targets for $\Z^d$-actions}\label{s:Zdaction}
We now consider the general case of a $\Z^d$-actions of a group $H$ on a probability space $(\cX,\mu)$ and give conditions under which a generic orbit eventually always hits a shrinking target. We will later apply these ideas to the special case of $\cX=\G\bk G$ and $H$ a unipotent or diagonalizable subgroup.

We start by setting some notation. Let $H\cong \Z^d$ be a group acting on a probability space $(\cX,\mu)$, and assume that the action $x\mapsto xh$ is measure preserving and ergodic. 
Fixing an isomorphism, $\iota:\Z^d\to H$, for any $\bbk\in \Z^d$ let $h_\bbk=\iota(\bbk)$ and for any $m\in \N$ consider growing balls $H_m=\{h_\bbk:\, \bbk\in \Z^d,\; \|\bbk\|_\infty\leq m\}$ and forward balls $H_m^+=\{h_\bbk:\, \bbk\in \Z_{>0}^d,\; \|\bbk\|_\infty\leq m\}$. 

Given a family $\cB=\{B_m\}_{m\in \N}$ of shrinking targets in $\cX$, (that is, $B_{m+1}\subseteq B_m$ and $\mu(B_m)\to 0$) we say that the forward orbit of a  point  $x\in \cX$ hits the target if $\{m: xH^+_m\cap B_m\neq \emptyset\}$ is unbounded and that it eventually always hits, if $xH_m^+\cap B_m\neq \emptyset$ for all sufficiently large $m$. We denote by $\cA_{\rm h}(\cB)$ and  $\cA_{\rm ah}(\cB)$ the set of points with hitting orbits and eventually always hitting orbits respectively.  (When considering the special case of a one parameter flow on the space $\cX=\G\bk G$ this definition coincides with the definition of $\cA_{\rm h}$ and $\cA_{\rm ah}$ given in the introduction.)
 We now give some general conditions on a family $\cB$, under which we can conclude that $\cA_{\rm ah}(\cB)$ has full (or zero) measure. 

\subsection{Fast shrinking targets} 
We first show here the rate $\mu(B_m)=\frac{1}{m^d}$ is critical in the sense if a sequence of targets have measure shrinking any faster, then generic orbits are not eventually always hitting. 
\begin{prop}\label{p:converse}
Let $H$ be a measure preserving ergodic $\Z^d$-action on a probability space $(\cX,\mu)$ and let $\cB=\{B_m\}_{m\in \N}$ denote a sequence of shrinking targets. If, along some subsequence we have that $m_j^d\mu(B_{m_j})\leq c<1$, then $\mu(\cA_{\rm ah}(\cB))=0$.
\end{prop}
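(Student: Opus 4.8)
The plan is to deduce the vanishing from a crude union bound together with the ergodicity of the $H$-action, after replacing $\cA_{\rm ah}(\cB)$ by a slightly larger set which, unlike $\cA_{\rm ah}(\cB)$ itself, is visibly $H$-invariant. The reason $\cA_{\rm ah}(\cB)$ cannot be used directly is that translating a point distorts which group elements lie in the forward ball $H_m^+$, so $\cA_{\rm ah}(\cB)$ is not obviously invariant and no ``$0$--$1$'' dichotomy is available for it.

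First I would introduce the enlarged sets and bound their measure. For an integer $C\ge 0$ put
\[
\cA_C:=\{x\in\cX:\ xH_{m+C}^+\cap B_m\neq\emptyset\ \text{for all sufficiently large }m\},
\]
so that $\cA_0=\cA_{\rm ah}(\cB)$ and $\cA_C\subseteq\cA_{C+1}$. Since $m_j\to\infty$, every $x\in\cA_C$ satisfies $xH_{m_j+C}^+\cap B_{m_j}\neq\emptyset$ for all large $j$, so $\cA_C\subseteq\liminf_{j}\{x:\ xH_{m_j+C}^+\cap B_{m_j}\neq\emptyset\}$. As $H_{m_j+C}^+$ has $(m_j+C)^d$ elements and the action preserves $\mu$, the union bound gives
\[
\mu\bigl(\{x:\ xH_{m_j+C}^+\cap B_{m_j}\neq\emptyset\}\bigr)\le (m_j+C)^d\,\mu(B_{m_j})\le c\,(1+C/m_j)^d\ \longrightarrow\ c ,
\]
and therefore $\mu(\cA_C)\le c$ for every $C$. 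Setting $\cA_\infty:=\bigcup_{C\ge0}\cA_C\supseteq\cA_{\rm ah}(\cB)$, monotone convergence yields $\mu(\cA_\infty)=\lim_{C\to\infty}\mu(\cA_C)\le c<1$.

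Next I would check that $\cA_\infty$ is $H$-invariant modulo null sets. Since the elements $h_{-e_1},\dots,h_{-e_d}$ (where $e_i$ denotes the $i$-th standard basis vector of $\Z^d$) generate $H$, it suffices to show that $x\in\cA_\infty$ implies $xh_{-e_i}\in\cA_\infty$. If $x\in\cA_C$, then for all large $m$ there is $h_\bbk\in H_{m+C}^+$ with $xh_\bbk\in B_m$; then $h_{\bbk+e_i}\in H_{m+C+1}^+$ and $(xh_{-e_i})h_{\bbk+e_i}=xh_\bbk\in B_m$, so $xh_{-e_i}\in\cA_{C+1}\subseteq\cA_\infty$. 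Hence $\cA_\infty\subseteq\cA_\infty h_{-e_i}^{-1}$, and since these two sets have equal (finite, $\mu$-preserved) measure they agree up to a null set; thus $\cA_\infty$ is $H$-invariant mod null. By ergodicity $\mu(\cA_\infty)\in\{0,1\}$, so $\mu(\cA_\infty)=0$, and finally $\mu(\cA_{\rm ah}(\cB))\le\mu(\cA_\infty)=0$.

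The one step that requires thought is the construction of the auxiliary family $\{\cA_C\}$: one must allow a bounded ``look-ahead'' $C$ so that $\cA_\infty$ absorbs translates by the generators — the forward ball of $xh_{-e_i}$ at scale $m+C+1$ contains the relevant translate of the forward ball of $x$ at scale $m+C$ — while the sub-critical bound $m_j^d\mu(B_{m_j})\le c<1$, applied in the limit $j\to\infty$, keeps $\sup_C\mu(\cA_C)$ strictly below $1$. Once this enlargement is in place, the remainder is just a union bound and the ergodic zero--one law.
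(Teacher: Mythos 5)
Your proof is correct and follows essentially the same strategy as the paper: enlarge $\cA_{\rm ah}(\cB)$ to an (almost) $H$-invariant set by allowing a bounded perturbation of the orbit window, bound its measure by $c$ via the trivial union bound along the subsequence $m_j$, and conclude by ergodicity. The only cosmetic difference is that the paper fattens the forward orbits by two-sided balls $H_\nu$ (obtaining a strictly invariant set $\cA^*$), whereas you enlarge the forward radius to $m+C$ and invoke the standard fact that an ergodic action gives measure $0$ or $1$ to sets invariant mod null.
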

\begin{proof}
We keep the sequence of shrinking targets $\cB$ fixed and will omit it from the notation denoting by $\cA=\cA_{\rm ah}(\cB)$.
It is not hard to show that the assumption $m_j^d\mu(B_{m_j})\leq c$ implies that $\mu(\cA)\leq c$. Hence if $\cA$ was invariant under the flow, ergodicity would imply that $\mu(\cA)=0$. In general, $\cA$ might not be invariant so we will consider a larger set $\cA\subset \cA^*$ that is invariant and show that $\mu(\cA^*)\leq c$. 

Explicitly, for any integers $\nu,l\geq 0$ let 
$$\cA_{\nu,l}=\bigcap_{m> l}\{x: xH_m^+H_\nu\cap B_m\neq \emptyset\}.$$
so that $\cA=\bigcup_{l\geq 0} \cA_{0,l}$, and we consider the larger set
$$\cA^*= \bigcup_{\nu\geq 0}\bigcup_{l\geq 0}\cA_{\nu,l}.$$

First to show that $\cA^*$ is invariant under $H$ we need to show that $\cA^* h\subseteq \cA^*$ for any $h\in H$.
Let $x\in \cA^* h$ then $xh^{-1}\in \cA^*$ so there are some $l,\nu\geq 0$  with $xh^{-1}\in \cA_{\nu,l}$.
Hence, for all $m\geq l$ we have that $xh^{-1}H_m^{+}H_\nu\cap B_m\neq \emptyset$.  Let $\nu'$ be sufficiently large so that $h^{-1}H_\nu\subseteq H_{\nu'}$ then for all $m\geq l$ we have that $xH_m^+H_{\nu'}\cap B_m\neq \emptyset$ so that $x\in \cA_{l,\nu'}\subseteq \cA^*$.

Next to bound the measure of $\cA^*$ since  $\cA_{\nu,l}\subseteq \cA_{\nu+1,l}$ and $\cA_{\nu,l}\subseteq \cA_{\nu, l+1}$ we have 
$$\mu(\cA^*)=\lim_{\nu\to\infty}\lim_{l\to\infty}\mu(\cA_{\nu,l}),$$
so it is enough to show that $\mu(\cA_{\nu,l})\leq c$ for any $l,\nu\geq 0$. For $l,\nu\geq 0$ fixed, notice that $xH_m^+H_\nu\cap B_m\neq \emptyset$ iff $x\in B_mH_\nu H^-_m$ with $H_m^{-}=\{h^{-1}: h\in H_m^+\}$ so that 
$$\cA_{\nu,l}=\bigcap_{m\geq l}B_mH_\nu H_m^{-},$$
and hence, for any $m\geq l$ 
$$\mu(\cA_{\nu,l})\leq \mu(B_mH_\nu H_m^{-})\leq\mu(B_m)(m+2\nu)^d=m^d\mu(B_m)(1+\frac{2\nu}{m})^d.$$
Let $m_j\to\infty$ with $m_j^d\mu(B_{m_j})\leq c$ then $\mu(\cA_{\nu,l})\leq c(1+\frac{2\nu}{m_j})^d$ and taking $j\to\infty$ we get that $\mu(\cA_{\nu,l})\leq c$. Since this holds for any $l,\nu\geq 0$ we get that $\mu(\cA^*)\leq c$, and since $\cA^*$ is invariant under the (ergodic) $H$-action then $\cA^*$  is a null set and hence so is $\cA$.
\end{proof}

\subsection{Slow shrinking targets}
Next we want to give a condition implying that $\cA_{\rm ah}(\cB)$ is of full measure.  
To do this, for any set $B\subseteq X$ and $m\in \N$ let  
\begin{equation}\label{e:CTBo}
\cC_{m,B}^o=\{x\in \cX: xH_m^+\cap B=\emptyset\},
\end{equation}
denote the set of points with forward $m$-orbit missing a set $B$. 
We show that a sufficiently strong bound on the measure of these sets as $m\to\infty$ will imply that generic orbits will eventually hit all shrinking sets (later we will show how, in some cases, it is possible to bound  $\mu(\cC_{m,B}^o)$ in terms of $m$ and  $\mu(B)$).
\begin{lem}\label{l:CTBo}
Let $\cB=\{B_m\}_{m\in\N}$ denote a sequence of shrinking targets. If there is a subsequence $m_j\to \infty$ with 
$\sum_j \mu(\cC_{m_{j-1},B_{m_{j}}}^o)<\infty$ the $\mu(\cA_{\rm ah}(\cB))=1$.
\end{lem}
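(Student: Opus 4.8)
The plan is to use the Borel--Cantelli lemma in its trivial (convergence) direction together with the monotonicity of the shrinking targets. Recall that $\cC_{m_{j-1},B_{m_j}}^o$ is exactly the set of points whose forward $m_{j-1}$-orbit misses $B_{m_j}$; so its complement is the set of points $x$ with $xH_{m_{j-1}}^+\cap B_{m_j}\neq\emptyset$. Since $\sum_j \mu(\cC_{m_{j-1},B_{m_j}}^o)<\infty$, the Borel--Cantelli lemma gives that for a.e.\ $x\in\cX$ there is some $J=J(x)$ such that for all $j\geq J$ we have $xH_{m_{j-1}}^+\cap B_{m_j}\neq\emptyset$. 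Let $\cE$ denote this full-measure set.

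Next I would upgrade this ``along the subsequence'' statement to an ``all sufficiently large $m$'' statement by interpolating with the nesting property $B_{m+1}\subseteq B_m$ and $H_m^+\subseteq H_{m+1}^+$. Fix $x\in\cE$ and take $m$ large, so that $m\geq m_{J-1}$; then there is a unique $j\geq J$ with $m_{j-1}\leq m< m_j$. Since $H_{m_{j-1}}^+\subseteq H_m^+$ and $B_m\supseteq B_{m_j}$, the non-emptiness $xH_{m_{j-1}}^+\cap B_{m_j}\neq\emptyset$ immediately forces $xH_m^+\cap B_m\neq\emptyset$. Hence $x\in\cA_{\rm ah}(\cB)$, and since $x\in\cE$ was arbitrary we conclude $\mu(\cA_{\rm ah}(\cB))=1$.

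I do not expect a serious obstacle here; the lemma is essentially a bookkeeping statement, and the only thing to be careful about is the direction of all the inclusions (larger index $\Rightarrow$ smaller target but larger orbit ball), which is precisely what makes the interpolation between consecutive subsequence indices go through in the favorable direction. The substantive work — actually bounding $\mu(\cC_{m,B}^o)$ in terms of $m$ and $\mu(B)$ via the effective mean ergodic theorem and the spectral input — is deferred to later sections, as the remark after the statement indicates.
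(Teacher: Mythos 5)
Your proposal is correct and follows essentially the same route as the paper: the paper phrases it via the complement, writing $\cA_{\rm ah}(\cB)^c$ as a limsup of the sets $\cC_{m,B_m}^o$ and using the inclusion $\bigcup_{m=m_{j-1}}^{m_j}\cC_{m,B_m}^o\subseteq \cC_{m_{j-1},B_{m_j}}^o$, which is exactly the dual of your interpolation step $H_{m_{j-1}}^+\subseteq H_m^+$, $B_{m_j}\subseteq B_m$ for $m\in[m_{j-1},m_j]$, followed by the convergence half of Borel--Cantelli.
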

\begin{proof}
Let $\cA=\cA_{\rm ah}(\cB)$ and note that $x\not\in \cA$, iff for all $T>0$ there is an integer $m\geq T$ such that $xH_m^+\cap B_m=\emptyset$,
that is, $\cA^c= \bigcap_{T\geq 1}\bigcup_{m>T}\cC_{m,B_m}^o$, with $\cC_{m,B_m}^o$ given in \eqref{e:CTBo}.
 We can separate the union over all $m>T$ into intervals of the form
  $$\bigcup_{m=m_{j-1}}^{m_j}\cC_{m,B_m}^o=\{x\in \cX: \exists m\in [m_{j-1},m_j],\; xH_m^+\cap B_m=\emptyset\}\subseteq \cC_{m_{j-1}, B_{m_j}}^o,$$
 where for the last inclusion note that $xH^+_{m_{j-1}}\cap B_{m_{j}}\subseteq xH_m^+\cap B_m$ for all $m\in [m_{j-1},m_j]$.
We thus get that
$$\cA^c\subseteq \bigcap_{T>1}\bigcup_{m_j>T} \cC_{m_{j-1},B_{m_{j}}}^o,$$
hence $\sum_j \mu(\cC_{m_{j-1},B_{m_{j}}}^o)<\infty$ implies that $\mu(\cA^c)=0$.
\end{proof}
\subsection{Asymptotics}
When $\mu(\cA_{\rm ah}(\cB))=1$ we also want an estimate on the number of elements in the intersections $xH_m^+\cap B_m$ (for typical orbits). To estimate this it is convenient to work with the following unitary averaging operator $\beta_m^+: L^2(\cX)\to L^2(\cX)$ given by
\begin{equation}\label{e:betam}
\beta_m^+(f)(x)=\frac{1}{|H_m^+|}\sum_{h\in H_m^+}f(xh).
\end{equation}
In particular, for $f=\chi_B$, the indicator function of $B$, we have that $\beta_m^+(f)(x)=\frac{|xH_m^+\cap B|}{|H^+_m|}$.
 From ergodicity, for any $f\in L^2(\cX)$ with $\mu(f)=\int fd\mu$, for a.e. $x\in \cX$ we have $\beta^+_m(f)(x)\to \mu(f)$ as $m\to \infty$. We consider the  following set of atypical points  
\begin{equation}\label{e:CTB}
\cC_{m,f}=\{x\in \cX: |\beta^+_m(f)(x)-\mu(f)|\geq \tfrac12\mu(f)\}.
\end{equation}
(in particular, for  $f=\chi_B$ if $x\in \cC_{m,B}^o$ then $\beta^+_m(f)=0$ so $\cC_{m,B}^0\subseteq \cC_{m,f}$).
\begin{lem}\label{l:CTB}
Let $\{f_m\}_{m\in \N}$ be a decreasing sequence of non-negative functions in $L^2(\cX)$ satisfying that  $\mu(f_{2^j})\leq C\mu(f_{2^{j+1}})$ for some $C>1$.
If 
$\sum_j \mu(\cC_{2^{j-1},f_{2^{j}}})<\infty$ then for a.e. $x\in \cX$ for all sufficiently large $m$ 
$$ \beta_m^+(f_m)(x)\geq \frac{\mu(f_m)}{C2^{d+1}},$$ 
and if $\sum_j \mu(\cC_{2^{j+1},f_{2^{j}}})<\infty$ then for a.e. $x\in \cX$ for all sufficiently large $m$ 
$$\beta_m^+(f_m)(x)\leq   C2^{d+1}\mu(f_m).$$
 \end{lem}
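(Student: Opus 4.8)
The plan is to interpolate between the sparse subsequence $\{2^j\}$ and a general index $m$ using monotonicity of the functions $f_m$ together with a comparison of the averaging operators $\beta_m^+$ on nested forward balls. First I would invoke Borel--Cantelli: from $\sum_j \mu(\cC_{2^{j-1},f_{2^j}})<\infty$ we get that for a.e. $x\in\cX$ there is $j_0(x)$ such that $x\notin \cC_{2^{j-1},f_{2^j}}$ for all $j\ge j_0$, i.e.
$$\beta_{2^{j-1}}^+(f_{2^j})(x)\ge \tfrac12\mu(f_{2^j})$$
for all such $j$. Now given a large $m$, choose $j$ with $2^{j-1}\le m<2^j$. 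The two pieces I need to compare $\beta_m^+(f_m)(x)$ with $\beta_{2^{j-1}}^+(f_{2^j})(x)$ are: (i) since $f_m\ge f_{2^j}$ by monotonicity (as $m\le 2^j$), and since $H_{2^{j-1}}^+\subseteq H_m^+$, every term appearing in the sum defining $\beta_{2^{j-1}}^+(f_{2^j})(x)$ also appears (with a larger or equal summand) in the sum defining $\beta_m^+(f_m)(x)$; only the normalizing cardinalities differ. Because $H_\ell^+=\{h_\bbk:\bbk\in\Z_{>0}^d,\ \|\bbk\|_\infty\le \ell\}$ has $|H_\ell^+|=\ell^d$, we have $|H_m^+|/|H_{2^{j-1}}^+| = (m/2^{j-1})^d \le 2^d$. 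Hence
$$\beta_m^+(f_m)(x)\ \ge\ \frac{|H_{2^{j-1}}^+|}{|H_m^+|}\,\beta_{2^{j-1}}^+(f_{2^j})(x)\ \ge\ \frac{1}{2^d}\cdot\frac12\mu(f_{2^j}).$$
Finally (ii) the doubling-type hypothesis $\mu(f_{2^{j-1}})\le C\mu(f_{2^j})$ together with $f_m\le f_{2^{j-1}}$ (since $m\ge 2^{j-1}$) gives $\mu(f_m)\le \mu(f_{2^{j-1}})\le C\mu(f_{2^j})$, so $\mu(f_{2^j})\ge \mu(f_m)/C$ and therefore $\beta_m^+(f_m)(x)\ge \mu(f_m)/(C2^{d+1})$, as claimed.

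For the upper bound the argument is symmetric but one must be slightly careful about which monotonicity directions and which cardinality ratio are used. From $\sum_j\mu(\cC_{2^{j+1},f_{2^j}})<\infty$ and Borel--Cantelli, for a.e. $x$ there is $j_0(x)$ with $\beta_{2^{j+1}}^+(f_{2^j})(x)\le \tfrac32\mu(f_{2^j})$ for all $j\ge j_0$. Given large $m$ choose $j$ with $2^j\le m<2^{j+1}$. Now $H_m^+\subseteq H_{2^{j+1}}^+$ and $f_m\le f_{2^j}$ (as $m\ge 2^j$), so the sum defining $\beta_m^+(f_m)(x)$ is dominated term-by-term by the corresponding sub-sum of $\beta_{2^{j+1}}^+(f_{2^j})(x)$; accounting for the normalizations, $|H_{2^{j+1}}^+|/|H_m^+| = (2^{j+1}/m)^d\le 2^d$, whence
$$\beta_m^+(f_m)(x)\ \le\ \frac{|H_{2^{j+1}}^+|}{|H_m^+|}\,\beta_{2^{j+1}}^+(f_{2^j})(x)\ \le\ 2^d\cdot\tfrac32\mu(f_{2^j}).$$
Then $f_m\ge f_{2^{j+1}}$ and the doubling hypothesis applied as $\mu(f_{2^j})\le C\mu(f_{2^{j+1}})\le C\mu(f_m)$ give $\beta_m^+(f_m)(x)\le \tfrac32 C2^d\mu(f_m)\le C2^{d+1}\mu(f_m)$, finishing the proof.

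I expect the only real subtlety — rather than a genuine obstacle — to be bookkeeping the direction of each inequality: which of $f_m\le f_{2^{j\pm1}}$ or $f_m\ge f_{2^{j\pm1}}$ is used, whether the ball inclusion goes the right way so that the term-by-term domination of the (unnormalized) sums is valid, and tracking that the cardinality ratio is at most $2^d$ in each regime so that the final constant is $C2^{d+1}$. All of these follow mechanically from $B_{m+1}\subseteq B_m$ (hence $f_{m+1}\le f_m$), from $|H_\ell^+|=\ell^d$, and from the hypothesis $\mu(f_{2^j})\le C\mu(f_{2^{j+1}})$; no deeper input is needed. (In the application $f_m=\chi_{B_m}$, and the inclusion $\cC^o_{m,B}\subseteq \cC_{m,f}$ noted before the lemma is what lets these bounds feed into the eventually-always-hitting statements.)
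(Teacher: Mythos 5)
Your proof is correct and is essentially the paper's argument: both rest on the observation that the unnormalized sum $m^d\beta_m^+(f_{m'})$ is increasing in $m$ and decreasing in $m'$, which lets one compare a general $m$ to the nearest dyadic scales, combined with the doubling hypothesis $\mu(f_{2^j})\leq C\mu(f_{2^{j+1}})$ and Borel--Cantelli applied to the sets $\cC_{2^{j\mp1},f_{2^j}}$ (the paper phrases this by showing the exceptional set $\cC_{\delta_0}$ is contained in the corresponding limsup set, but the content is identical).
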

\begin{proof}
Let $\cC_\delta$ be the set of all points, $x$, such that for all $T\geq 1$ there is an integer $m>T$ such that $\beta_m^+(f_m)(x)\leq \delta\mu(f_m)$, that is,
 $$\cC_\delta= \bigcap_{T\geq 1}\bigcup_{m>T}\{x:\beta^+_m(f_m)(x)\leq \delta\mu(f_m)\}.$$
 Since the function $m^d\beta^+_m(f_{m'})$ is increasing in $m$ and decreasing in $m'$, we can 
 separate the union into dyadic intervals, and note that for any $j\geq 0$
 $$\big\{x: \exists m\in [2^{j-1},2^j],\; \beta^+_m(f_m)(x)\leq \delta \mu(f_m)\big\}\subseteq \big\{x: \beta^+_{2^{j-1}}(f_{2^j})(x)\leq 2^{d}C\delta \mu(f_{2^j})\},$$
 where we used that $\mu(f_{2^{j-1}})\leq C\mu(f_{2^j})$. In particular, for  $\delta_0= \frac{1}{C2^{d+1}}$ we get that
 $$\bigcup_{m=2^{j-1}}^{2^j}\{x:\beta^+_m(f_m)(x)\leq \delta_0\mu(B_m)\}\subseteq \cC_{2^{j-1},f_{2^j}},$$
 hence,
 $$\cC_{\delta_0}\subseteq \bigcap_{T\geq 1}\bigcup_{j>\log(T)} \cC_{2^{j-1},f_{2^{j}}},$$
and if $\sum_j  \cC_{2^{j-1},B_{2^{j}}}<\infty$ then $\mu(\cC_{\delta_0})=0$. Since for all $x\in \cX\setminus \cC_{\delta_0}$ we have that $\beta^+_m(f_m)(x)\geq  \frac{\mu(f_m)}{C2^{d+1}}$ for all sufficiently large $m$ this proves the first part.
 
 Similarly, if we denote by  $\cC^\delta$  the set of all points such that for all $T\geq 1$ there is an integer $m\geq T$ such that $\beta_m^+(f_m)(x)\geq \delta^{-1}\mu(f_m)$, then
 $$\cC^\delta= \bigcap_{T\geq 1}\bigcup_{m>T}\{x:\beta_m^+(f_m)(x)\geq \delta^{-1}\mu(f_m)\}.$$
This time notice that
 $$\big\{x: \exists m\in [2^{j-1},2^j],\;  \beta_m^+(f_m)(x)\geq \delta^{-1}\mu(f_m)\big\}\subseteq \big\{x: \beta^+_{2^{j}}(f_{2^{j-1}})(x)\geq \frac{ \mu(f_{2^j})}{2^{d}C\delta}\},$$
 and hence, for $\delta_0=\frac{1}{2^{d+1} C}$ as above we have
 $$\bigcup_{m=2^{j-1}}^{2^j}\big\{x:  \beta_m^+(f_m)(x)\geq \delta_0^{-1}\mu(f_m)\big\}\subseteq \cC_{2^j,f_{2^{j-1}}},$$ 
 so that
$$\cC^{\delta_0}\subseteq \bigcap_{T\geq 1}\bigcup_{j>\log(T)} \cC_{2^{j+1},f_{2^{j}}},$$
and the proof follows as above.
 \end{proof}

\section{Effective mean ergodic theorems}
Recall that for an ergodic $\Z^d$-action of a group $H$ on a probability space $(\cX,\mu)$ the mean ergodic theorem states that 
$$\|\beta^+_m(f)-\mu(f)\|_2\to 0,$$
for any $f\in L^2(\cX)$, where $\beta_m^+$ is the unitary averaging operator defined in \eqref{e:betam} and $\mu(f)=\int fd\mu$.  We say that the $H$ action satisfies an effective mean ergodic theorem (with exponent $\kappa>0$) if for all $f\in L^2(\cX)$
\begin{equation}\label{e:MET}\|\beta^+_m(f)-\mu(f)\|_2\ll_\kappa \frac{\|f\|_2}{|H^+_m|^\kappa}.\end{equation}
Using the ideas of \cite{GhoshKelmer15}, by showing that the $H$ action satisfies effective mean ergodic theorems with exponents $\kappa=1/2$ (or arbitrarily close to $1/2$), it is possible to essentially resolve the shrinking target problem for the corresponding $H$ action. Our goal in this section is to prove such bounds for the case where $\cX=\G\bk G$ and the group $H\leq G$ is either a unipotent group (of rank $d$) or a diagonalizable group (of rank one).

\subsection{Unipotent actions}
We first consider the case of unipotent actions. Let $H\leq G$ be a discrete unipotent of rank $d$,  for some $1\leq d< n$. After perhaps conjugating by an element of $K$, we may assume that $H\leq N$ and, without loss of generality, we may assume that 
\begin{equation}\label{e:H}
H=\{n_\bbk: \bbk\in \Z^d\},
\end{equation}
where we identify $\Z^d$ as the first $d$ coordinates of $\Z^{n-1}$. With this convention we get that the truncated orbits and forward orbits are given explicitly by 
\begin{equation}\label{e:Ht}
H_m=\{n_\bbk\in H:  \|\bbk\|\leq m\}.
\end{equation}
and
\begin{equation}
\label{e:Ht+}H_m^+=\{n_\bbk\in H: \bbk\in \Z_{>0}^d,\; \|\bbk\|\leq m\}.
\end{equation}

%


If we attempt to prove \eqref{e:MET} using the decay of matrix coefficients (given in Proposition \ref{p:matrixdecay})
we need to replace $\|f\|_2$ on the right by some Sobolev norm, which will be too costly in the application. Instead, restricting our attention to spherical functions, we can use Proposition \ref{p:smatrixdecay} to show that  \eqref{e:MET} is satisfied with some $\kappa>0$ depending on the spectral gap. Explicitly, if the spectrum of $L^2(\G\bk G)$ contains complementary series $\pi _{s}$ with $s\in(\rho-\frac{d}{2},\rho)$ then we get that $\kappa<\frac{\rho-s}{d}$ is bounded away from $1/2$, which is again an obstacle for solving the shrinking target problem. Instead, we prove the the following modified version of \eqref{e:MET}, treating the contribution of the exceptional spectrum separately. 
\begin{thm}\label{t:SMET}
For any spherical $f\in L^2(\G\bk G)$ we have, 
$$\|\beta^+_mf-\mu(f)\|_2 \ll  \frac{\|f\|_2\log(m)}{|H^+_m|^{1/2}}+ \sum_{s_k\in(\rho-\frac{d}{2},\rho)}\frac{ |\langle f,\vf_k\rangle|}{|H^+_m|^{\frac{\rho-s_k}{d}}} +
\frac{ |\langle f,\vf_{k_0}\rangle|\log(m)}{|H^+_m|^{1/2}} $$
where the last term only appears if $s_{k_0}=\rho-d/2$ for some form $\vf_{k_0}$, and the $\log(m)$ in the first term can be omitted if $d<2\rho$. \end{thm}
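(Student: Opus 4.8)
The plan is to substitute the spectral decomposition of the spherical function $f$, bound $\|\beta_m^+\vf\|_2$ for each irreducible spherical component by writing it as a double sum of matrix coefficients and feeding in the spherical decay estimates of Proposition \ref{p:smatrixdecay}, and then reassemble everything by the triangle inequality. Concretely, I would first write $f=\mu(f)+\sum_k\langle f,\vf_k\rangle\vf_k+f_0$ as in the spectral decomposition, with $\vf_k$ an $L^2$-normalized exceptional form spanning a copy of $\pi_{s_k}$, $s_k\in(0,\rho)$, and $f_0\in L^2_{\mathrm{temp}}(\G\bk G)$; since $f$ is spherical, $f_0$ is spherical and each $\vf_k$ is the spherical unit vector in its copy of $\pi_{s_k}$, so $\langle\pi(g)\vf_k,\vf_k\rangle=\phi_{s_k}(g)$. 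As $\beta_m^+$ fixes constants and $\langle f,1\rangle=\mu(f)$, one has $\beta_m^+f-\mu(f)=\sum_k\langle f,\vf_k\rangle\beta_m^+\vf_k+\beta_m^+f_0$, so by the triangle inequality it suffices to bound each $\|\beta_m^+\vf_k\|_2$ and $\|\beta_m^+f_0\|_2$; I would also record that the exceptional spectrum is finite, whence $\sum_k|\langle f,\vf_k\rangle|\ll\|f\|_2$ by Cauchy--Schwarz.

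Next, for a spherical unit vector $\vf$ in an irreducible $\pi_s$, I would expand
$$\|\beta_m^+\vf\|_2^2=\frac{1}{|H_m^+|^2}\sum_{\substack{\bbk,\bbk'\in\Z_{>0}^d\\ \|\bbk\|,\|\bbk'\|\leq m}}\langle\pi(n_{\bbk'}^{-1}n_\bbk)\vf,\vf\rangle=\frac{1}{|H_m^+|^2}\sum_{\bbk,\bbk'}\langle\pi(n_{\bbk-\bbk'})\vf,\vf\rangle,$$
using that $N$ is abelian so that $n_{\bbk'}^{-1}n_\bbk=n_{\bbk-\bbk'}$. Since at most $|H_m^+|$ pairs $(\bbk,\bbk')$ produce a given difference, this is $\ll\frac{1}{|H_m^+|}\sum_{\|\bbk\|\leq2m}|\langle\pi(n_\bbk)\vf,\vf\rangle|$. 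For $\|\bbk\|\geq1$ I would then apply the Cartan decomposition $n_\bbk=k_1a_tk_2$, which by Lemma \ref{l:kakunipotent} satisfies $e^t\asymp\|\bbk\|^2$, together with Proposition \ref{p:smatrixdecay}: for $\vf=\vf_k$ in the complementary series this gives $|\langle\pi(n_\bbk)\vf_k,\vf_k\rangle|\ll_{s_k}\|\bbk\|^{-2(\rho-s_k)}$, while for the spherical tempered $f_0$ it gives $|\langle\pi(n_\bbk)f_0,f_0\rangle|\ll\|f_0\|_2^2\,\log(\|\bbk\|)\,\|\bbk\|^{-2\rho}$ (for $\|\bbk\|\leq1$ one bounds the matrix coefficient trivially by $1$, resp.\ by $\|f_0\|_2^2$).

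Then I would compare the resulting sums over $\bbk$ with the integrals $\int_1^{2m}r^{d-1-2(\rho-s_k)}\,dr$ and $\int_2^{2m}\log(r)\,r^{d-1-2\rho}\,dr$, using that $d\leq n-1=2\rho$. The first is $\asymp m^{d-2(\rho-s_k)}$ when $s_k>\rho-\tfrac{d}{2}$, is $\asymp\log m$ when $s_k=\rho-\tfrac{d}{2}$, and is $O(1)$ when $s_k<\rho-\tfrac{d}{2}$; dividing by $|H_m^+|\asymp m^d$ and taking square roots gives $\|\beta_m^+\vf_k\|_2$ bounded by $|H_m^+|^{-(\rho-s_k)/d}$, by $(\log m)^{1/2}|H_m^+|^{-1/2}$, and by $|H_m^+|^{-1/2}$ in the three cases. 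The second integral is $O(1)$ when $d<2\rho$ and $\asymp(\log m)^2$ when $d=2\rho$, so $\|\beta_m^+f_0\|_2\ll\|f_0\|_2(\log m)|H_m^+|^{-1/2}$, with the logarithm removable when $d<2\rho$. Recombining via the first paragraph: the complementary forms with $s_k\in(\rho-\tfrac{d}{2},\rho)$ produce the middle sum in the statement, a form with $s_{k_0}=\rho-\tfrac{d}{2}$ (should one exist) produces the last term (using $(\log m)^{1/2}\leq\log m$), and the finitely many remaining exceptional forms together with $f_0$ are absorbed into $\tfrac{\|f\|_2\log m}{|H_m^+|^{1/2}}$.

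The step requiring the most care will be this last recombination: getting the trichotomy according to the sign of $d-2(\rho-s_k)$ right, handling the anomalous boundary case $s_k=\rho-\tfrac{d}{2}$ that produces the stray logarithm, and treating the parallel dichotomy between $d<2\rho$ and $d=2\rho$ for the tempered part $f_0$. The structurally essential move, however, is the passage in the second paragraph from the average over $H_m^+\times H_m^+$ to a single sum over differences controlled by the matrix coefficients at the points $n_\bbk$ --- this is where the abelian structure of $N$ and the Cartan estimate $e^t\asymp\|\bbk\|^2$ are used, and also where the hypothesis that $f$ is spherical enters, so that the sharp spherical bounds of Proposition \ref{p:smatrixdecay} may be used in place of the lossier Sobolev-norm bounds of Proposition \ref{p:matrixdecay}.
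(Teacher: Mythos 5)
Your proposal is correct and follows essentially the same route as the paper's proof: the same spectral decomposition and triangle inequality, the same reduction of $\|\beta_m^+\vf\|_2^2$ to a single sum over differences in $H_m$ (counting at most $|H_m^+|$ pairs per difference), the same use of Lemma \ref{l:kakunipotent} with Proposition \ref{p:smatrixdecay}, and the same trichotomy in $s_k$ and dichotomy $d<2\rho$ versus $d=2\rho$ in the final assembly. No gaps to report.
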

\begin{proof}
Write $f=\langle f, 1\rangle+\sum_k \langle f,\vf_{k}\rangle \vf_k +f_0$ with $f_0\in L^2_{\mathrm{temp}}(\G\bk G)$.
Then
\begin{equation}\label{e:spectral}\|\beta^+_mf-\mu(f)\|_2 \leq \sum_k |\langle f,\vf_k\rangle| \|\beta^+_m \vf_k\|_2+\|\beta^+_m f_0\|_2.
\end{equation}
We first estimate 
\begin{eqnarray*}
\|\beta^+_m f_0\|_2^2&=&\frac{1}{|H^+_m|^2}\sum_{h,h'\in H^+_m}\langle \pi(h')f_0,\pi(h)f_0\rangle\\
 &=&\frac{1}{|H^+_m|^2}\sum_{h\in H^+_m}\sum_{h'\in H_m^+}\langle \pi(h^{-1}h')f_0,f_0\rangle\\
 &=&\frac{1}{|H^+_m|^2}\sum_{h'\in H_m}\langle \pi(h')f_0,f_0\rangle\#\{h\in H^+_m: h'h\in H^+_m\}\\
 &\leq &\frac{1}{|H^+_m|}\sum_{h\in H_m}|\langle \pi(h)f_0,f_0\rangle|
\end{eqnarray*}
Noting that $f_0$ is spherical and tempered and using Proposition \ref{p:smatrixdecay} and Lemma \ref{l:kakunipotent} for $h=n_\bbk$ with $\|\bbk\|\geq 1$ gives
$$|\langle \pi(h)f_0,f_0\rangle|\ll \frac{\|f_0\|_2^2\log(\|\bbk\|)}{\|\bbk\|^{2\rho}},$$
and we can bound
\begin{eqnarray*}
\sum_{h\in H_m}|\langle \pi(h)f_0,f_0\rangle| &=& \sum_{\|\bbk\|<1}\norm{f_0}_2^2+
\sum_{1\leq \|\bbk\|\leq m}\frac{\|f_0\|_2^2\log(\|\bbk\|)}{\|\bbk\|^{2\rho}} \\
&\ll& \norm{f}_2^2\left\{\begin{array}{cc} 1 & d<2\rho\\ \log^2(m) & d=2\rho\\
\end{array}\right.
\end{eqnarray*}
so that 
$$\|\beta^+_m f_0\|_2\ll \frac{\|f\|_2}{|H_m^+|^{1/2}}\left\lbrace\begin{array}{cc}1& d<2\rho\\
  \log(m) & d=2\rho\end{array}\right.$$
  
By the same argument for each of the exceptional forms $\vf_{k}$ in $\pi_{s_k}$ we can bound 
\begin{eqnarray*} \|\beta^+_m \vf_k\|_2^2&\ll &\frac{1}{|H^+_m|}(1+\sum_{1\leq \|\bbk\|\leq {m}}\|\bbk\|^{2(s_k-\rho)})
\ll \left\{\begin{array}{cc} 
m^{-d} & s_k<\frac{2\rho-d}{2}\\
\log(m)m^{-d} & s_k=\frac{2\rho-d}{2}\\
m^{2(s_k-\rho)} & s_k>\frac{2\rho-d}{2}
\end{array}\right.
\end{eqnarray*}
hence
$$ \|\beta^+_m \vf_k\|_2\ll \left\{\begin{array}{cc} 
m^{-d/2} & s_k<\frac{2\rho-d}{2}\\
\log(m)m^{-d/2} & s_k=\frac{2\rho-d}{2}\\
m^{s_k-\rho} & s_k>\frac{2\rho-d}{2}
\end{array}\right..
$$
Plugging this in \eqref{e:spectral} and using the bound $|\langle f,\vf_k\rangle|\leq \|f\|_2$ for $s_k<\rho-d/2$ we get 
$$\|\beta^+_mf-\mu(f) \|_2 \ll \frac{\|f\|_2}{ |H_m^+|^{1/2}}+\sum_{s_k\geq \rho-d/2} |\langle f,\vf_k\rangle | m^{s_k-\rho} $$
where there is an extra $\log(m)$ multiplying the first term when $d=2\rho$ and multiplying $m^{\sigma_k-\rho}$ if $s_k=\rho-d/2$ for some $k$.
\end{proof}

When applying this to the shrinking target problem we take our test functions to be indicator functions of our shrinking sets. We now show that the result of Theorem \ref{t:SMET} can replace \eqref{e:MET} for studying spherical shrinking target problems. First, if our space is compact, or more generally if our shrinking sets are contained in a compact region $\Omega$ of our space, we can bound 
$|\langle f,\vf_k\rangle|\leq \|f\|_1\|{\vf_k}_{|_{\Omega}}\|_\infty\ll_\Omega  \|f\|_1$. We thus get

\begin{cor}\label{c:comp}
Fix a compact set $\Omega\subset \G\bk G$. For any spherical set $B\subseteq \Omega$ if $f=\chi_B$ then 
 for $d\leq 2\rho$
$$\|\beta^+_mf-\mu(f)\|_2 \ll_\Omega \frac{\sqrt{\mu(B)}\log(m)}{|H^+_m|^{1/2}}+ \frac{\mu(B)}{|H^+_m|^{\delta}},$$
where $\delta>0$ depends $d$ and on the spectral gap for $\G\bk G$, and the $\log(m)$ in the first term is only needed when $d=2\rho$.
\end{cor}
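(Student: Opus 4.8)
The plan is to deduce the corollary directly from Theorem \ref{t:SMET} applied to the indicator function $f=\chi_B$, estimating each term on the right-hand side using only that $B$ sits inside the fixed compact set $\Omega$. First I would record the trivial identity $\|\chi_B\|_2=\mu(B)^{1/2}$, so that the first term of Theorem \ref{t:SMET} becomes $\mu(B)^{1/2}\log(m)/|H^+_m|^{1/2}$; by the final sentence of that theorem the factor $\log(m)$ may be omitted when $d<2\rho$, which accounts for the first term in the corollary and the restriction on when the logarithm is needed.

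The main point is then to handle the exceptional terms. For each exceptional form $\varphi_k$ occurring in the spectral decomposition, positivity of $\chi_B$ together with $B\subseteq\Omega$ gives
$$|\langle\chi_B,\varphi_k\rangle|\le\int_B|\varphi_k|\,d\mu\le\mu(B)\,\|\varphi_k\|_{L^\infty(\Omega)}.$$
Each $\varphi_k$ is a smooth eigenfunction, hence continuous and bounded on the compact set $\Omega$, so $\|\varphi_k\|_{L^\infty(\Omega)}\ll_\Omega 1$; moreover only finitely many $s_k$ lie in $(\rho-\tfrac d2,\rho]$ because the exceptional (complementary-series) spectrum of $\G\bk G$ is finite. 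Hence the sum $\sum_{s_k\in(\rho-d/2,\rho)}|\langle\chi_B,\varphi_k\rangle|\,|H^+_m|^{-(\rho-s_k)/d}$ is a finite sum bounded by $\ll_\Omega\mu(B)\sum_k|H^+_m|^{-(\rho-s_k)/d}\ll_\Omega\mu(B)/|H^+_m|^{\delta_0}$ with $\delta_0=\min_k(\rho-s_k)/d>0$ (and if no such $s_k$ exists one simply takes $\delta_0$ to be any value in $(0,\tfrac12]$).

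It remains to absorb the boundary term $|\langle\chi_B,\varphi_{k_0}\rangle|\log(m)/|H^+_m|^{1/2}$ that appears when $s_{k_0}=\rho-\tfrac d2$: the same pairing bound gives $|\langle\chi_B,\varphi_{k_0}\rangle|\ll_\Omega\mu(B)$, and since $|H^+_m|\asymp m^d$ grows polynomially we have $\log(m)/|H^+_m|^{1/2}\ll 1/|H^+_m|^{\delta_1}$ for any $\delta_1<\tfrac12$, so this term is also $\ll_\Omega\mu(B)/|H^+_m|^{\delta_1}$. Setting $\delta=\min(\delta_0,\delta_1)$, which depends only on $d$ and on the spectral gap of $\G\bk G$, combines all the exceptional contributions into the single term $\mu(B)/|H^+_m|^{\delta}$ and completes the argument. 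I do not expect any genuine obstacle here: the corollary is a soft consequence of Theorem \ref{t:SMET}, the only facts needing a word of justification being the finiteness of the small spectrum (so that the relevant sum over $s_k$ is finite and the implied constants are uniform in $B$) and the harmless polynomial absorption of logarithmic factors into an exponent strictly below $\tfrac12$.
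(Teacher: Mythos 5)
Your proposal is correct and follows essentially the same route as the paper: the paper deduces the corollary from Theorem \ref{t:SMET} by bounding $|\langle f,\vf_k\rangle|\leq \|f\|_1\|\vf_k|_{\Omega}\|_\infty\ll_\Omega \mu(B)$ for the (finitely many) exceptional forms and absorbing the exceptional and boundary terms into $\mu(B)/|H_m^+|^{\delta}$. Your additional remarks on the finiteness of the complementary-series spectrum and on swallowing the $\log(m)$ factor into an exponent below $\tfrac12$ are exactly the implicit steps in the paper's one-line argument.
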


We are also interested in cases where our shrinking sets go far out into the cusps, in which case this result can not be applied. To handle these cases we recall that the exceptional forms $\vf_k$ are in $L^p(\G\bk G)$ for any $1<p<\frac{2\rho}{\rho-s_k}$. Taking $q=\frac{p}{p-1}$ we get that
$|\langle f,\vf_k\rangle|\ll  \|f\|_q,$
for all $q> \frac{2\rho}{\rho+s_k}$.  Using this bound together with Theorem \ref{t:SMET} gives

\begin{cor}\label{c:Lp}
For any spherical set $B\subseteq \G\bk G$, for $f=\chi_B$ we have for any $d\leq 2\rho$ 
$$\|\beta^+_mf-\mu(f)\|_2\ll_\epsilon \frac{\sqrt{\mu(B)}\log(m)}{|H_m^+|^{\frac{1}{2}}}+\sum_{s_k>\rho-\frac{d}{2}} \frac{ \mu(B)^{\frac{\rho+s_k-\epsilon}{2\rho}}}{|H_m^+|^{\frac{\rho-s_k}{d}}}+\frac{\log(m)\mu(B)^{1-\frac{d+2\epsilon}{4\rho}}}{|H_m^+|^{\frac12}},$$
where the $\log(m)$ in the first term is only needed when $d=2\rho$ and the last term only exists if $s_{k_0}=\rho-\tfrac{d}{2}$ for some residual form $\vf_{k_0}$.
\end{cor}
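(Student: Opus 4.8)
The plan is to combine Theorem~\ref{t:SMET} with the $L^p$-integrability of the exceptional spectral forms. The right-hand side of Theorem~\ref{t:SMET} for $f=\chi_B$ already has the $\|f\|_2=\sqrt{\mu(B)}$ and $\log(m)$ features of the claimed bound, so the only new input needed is a bound on the pairings $|\langle \chi_B,\vf_k\rangle|$ for the finitely many exceptional forms $\vf_k\in\pi_{s_k}$ that does not cost a full factor of $\sqrt{\mu(B)}$ (such a crude bound is useless when $B$ goes deep into a cusp).

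First I would invoke the Lemma proved earlier (in the ``Spectral decomposition'' subsection): a residual form $\vf_k$, coming from a pole of an Eisenstein series at $\sigma=s_k+\rho\in(\rho,2\rho)$, lies in $L^p(\G\bk G)$ for every $p<\tfrac{2\rho}{\rho-s_k}$; cusp forms are bounded, so they give $|\langle\chi_B,\vf_k\rangle|\ll\mu(B)$, which is even stronger. Then Hölder's inequality with conjugate exponent $q=p/(p-1)$ gives $|\langle \chi_B,\vf_k\rangle|\le\mu(B)^{1/q}\|\vf_k\|_p$, and as $p\uparrow\tfrac{2\rho}{\rho-s_k}$ the exponent $1/q\downarrow\tfrac{\rho+s_k}{2\rho}$. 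Hence for any small $\epsilon>0$, chosen uniformly over the finitely many exceptional forms, I can pick $q$ with $1/q=\tfrac{\rho+s_k-\epsilon}{2\rho}$ and obtain $|\langle \chi_B,\vf_k\rangle|\ll_\epsilon\mu(B)^{(\rho+s_k-\epsilon)/(2\rho)}$.

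Substituting these estimates, together with $\|\chi_B\|_2=\sqrt{\mu(B)}$, into Theorem~\ref{t:SMET} yields the first two terms of the asserted bound verbatim (the forms with $s_k<\rho-\tfrac d2$ are already absorbed into the first term of Theorem~\ref{t:SMET}, since there $|\langle \chi_B,\vf_k\rangle|\le\sqrt{\mu(B)}$ and $|H^+_m|^{-(\rho-s_k)/d}\le|H^+_m|^{-1/2}$). The only remaining bookkeeping is the boundary term appearing when $s_{k_0}=\rho-\tfrac d2$ for some residual form: its exponent becomes $\tfrac{\rho+s_{k_0}-\epsilon}{2\rho}=\tfrac{2\rho-d/2-\epsilon}{2\rho}=1-\tfrac{d+2\epsilon}{4\rho}$, which is exactly the stated power of $\mu(B)$, and the $\log(m)$ on the first term carries over from Theorem~\ref{t:SMET} precisely when $d=2\rho$. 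I do not expect a genuine obstacle: this is a routine Hölder-plus-substitution argument. The one point requiring a little care is that the implied constant depends on $\epsilon$ through $\|\vf_k\|_p$ as $p$ approaches its endpoint, but this is harmless because there are only finitely many exceptional forms, so a single $\epsilon$-dependent constant works for all of them.
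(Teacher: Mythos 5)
Your argument is exactly the paper's: invoke the lemma that residual forms lie in $L^p(\G\bk G)$ for $p<\tfrac{2\rho}{\rho-s_k}$, apply H\"older to get $|\langle \chi_B,\vf_k\rangle|\ll_\epsilon \mu(B)^{\frac{\rho+s_k-\epsilon}{2\rho}}$, and substitute into Theorem \ref{t:SMET}, with the boundary case $s_{k_0}=\rho-\tfrac d2$ giving the exponent $1-\tfrac{d+2\epsilon}{4\rho}$. The bookkeeping and the handling of the $\log(m)$ factors match the paper, so there is nothing to add.
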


\subsection{Diagonalizable action}
Next we consider the case of diagonalizable actions. Here the $\Z$ action is given by a diagonalizable group $H=\{g_m\}_{m\in \Z}$. It is not hard to see that the argument we used for the unipotent action works just as well for diagonalizable actions. Moreover,  for diagonalizable actions the exponential decay of correlation can be used to obtain a similar mean ergodic theorem also for non-spherical smooth test functions. In this case, the bound on the right hand side will depend on Sobolev norms, but only logarithmically which is harmless for most applications.

\begin{thm}\label{t:DMET}
For $H=\{g_m\}_{m\in \Z}$ diagonalizable, for  $f\in L^2(\G\bk G)$ spherical we have
 $$\|\beta^+_mf-\mu(f)\|_2^2 \ll  \frac{\|f\|^2_2}{m},$$
 while for any smooth $f\in L^2(\G\bk G)\cap C^\infty(\G\bk G)$ we have, 
$$\|\beta^+_mf-\mu(f)\|_2^2 \ll  \frac{\|f\|^2_2\log(\tfrac{\cS(f)}{\|f\|_2})}{m},$$
where $\cS(f)=\cS_{l,2}(f)$ denotes the Sobolev norm from proposition \ref{p:matrixdecay} and
 the  implied constant depends on the spectral gap and the group $H$.
 \end{thm}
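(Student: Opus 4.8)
The plan is to prove both estimates via the same second-moment computation used in Theorem~\ref{t:SMET}, specialized to $d=1$, and then to handle the non-spherical case by trading the missing spectral decomposition for the effective matrix-coefficient bound of Proposition~\ref{p:matrixdecay} at the cost of a logarithmic factor. First I would expand
$$
\|\beta_m^+ f-\mu(f)\|_2^2=\frac{1}{|H_m^+|^2}\sum_{h,h'\in H_m^+}\langle \pi(h^{-1}h')(f-\mu(f)),f-\mu(f)\rangle,
$$
and, exactly as in the proof of Theorem~\ref{t:SMET}, bound the number of pairs with a fixed difference to reduce this to $\frac{1}{|H_m^+|}\sum_{h\in H_m}|\langle \pi(h)f_\star,f_\star\rangle|$ where $f_\star=f-\mu(f)\in L^2_0$. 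Since $H=\{g_k\}_{k\in\Z}$ is diagonalizable, Lemma~\ref{l:kakdiag} gives $g_k=k_1 a_{ck+O(1)}k_2$, so the Cartan parameter grows \emph{linearly} in $k$ rather than logarithmically; this is the reason the answer is $\|f\|_2^2/m$ here instead of the $\log^2 m/m$ one might expect from the unipotent bound.

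For the spherical case I would decompose $f_\star=\sum_k\langle f,\vf_k\rangle\vf_k+f_0$ into exceptional and tempered parts and apply Proposition~\ref{p:smatrixdecay} together with Lemma~\ref{l:kakdiag}. For the tempered part, $|\langle\pi(g_k)f_0,f_0\rangle|\ll \|f_0\|_2^2\,|k|e^{-\rho c|k|}$, and $\sum_{|k|\le m}|k|e^{-\rho c|k|}$ converges, giving a contribution $\ll\|f\|_2^2/m$. For each exceptional form in $\pi_{s_k}$ with $s_k\in(0,\rho)$, $|\langle\pi(g_k)\vf_k,\vf_k\rangle|\ll_{s_k}\|\vf_k\|_2^2 e^{(s_k-\rho)c|k|}$, and again $\sum_{|k|\le m}e^{(s_k-\rho)c|k|}=O(1)$ since $s_k-\rho<0$, so these contribute $\ll\sum_k|\langle f,\vf_k\rangle|^2/m\le\|f\|_2^2/m$ after summing (using Bessel's inequality over the finitely many exceptional forms, or a uniform spectral-gap constant). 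Collecting terms yields the first displayed bound.

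For the non-spherical smooth case, the spectral decomposition above is unavailable, but Proposition~\ref{p:matrixdecay} gives $|\langle\pi(g_k)f_\star,f_\star\rangle|\ll \cS(f)^2 e^{-\alpha c|k|}$ for a spectral-gap exponent $\alpha>0$, together with the trivial bound $|\langle\pi(g_k)f_\star,f_\star\rangle|\le\|f_\star\|_2^2\le\|f\|_2^2$. The trick is to split the sum $\sum_{|k|\le m}|\langle\pi(g_k)f_\star,f_\star\rangle|$ at a threshold $|k|\le T$, using the trivial bound for small $k$ and the decaying bound for $|k|>T$; the tail $\sum_{|k|>T}\cS(f)^2 e^{-\alpha c|k|}\ll \cS(f)^2 e^{-\alpha c T}$ is $\le\|f\|_2^2$ once $T\asymp \frac{1}{\alpha c}\log(\cS(f)/\|f\|_2)$, while the head contributes $\ll T\|f\|_2^2\ll \|f\|_2^2\log(\cS(f)/\|f\|_2)$. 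Dividing by $|H_m^+|\asymp m$ gives $\|\beta_m^+f-\mu(f)\|_2^2\ll \|f\|_2^2\log(\cS(f)/\|f\|_2)/m$, as claimed. The main obstacle, and the only genuinely delicate point, is the bookkeeping around the exceptional spectrum in the spherical case: one must make sure the implied constants (which depend on $s_k$ through $C_{s_k}$) are controlled uniformly, which is where the finiteness of the exceptional spectrum and the spectral gap of $\G$ enter; everything else is the routine second-moment calculation already carried out for the unipotent case.
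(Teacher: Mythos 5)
Your proposal is correct and follows essentially the same route as the paper: the same second-moment reduction to $\frac{1}{m}\sum_{|k|\le m}|\langle\pi(g_k)f_\star,f_\star\rangle|$, the bound via Proposition \ref{p:smatrixdecay} and Lemma \ref{l:kakdiag} (with the spectral gap giving a summable exponential decay) in the spherical case, and the split of the sum at a threshold $T\asymp\frac{1}{\alpha c}\log(\cS(f)/\|f\|_2)$ using Proposition \ref{p:matrixdecay} in the smooth case, which is exactly the paper's choice of the parameter $M$. The only difference is cosmetic: you spell out the decomposition into exceptional and tempered spherical components, which the paper subsumes into a single spectral-gap decay estimate for $f_0\in L^2_0$.
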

\begin{proof} 
Writing $f_0=f-\mu(f)$ and using the same argument as in the proof of Theorem \ref{t:SMET} we get
\begin{eqnarray*}
\|\beta^+_m(f_0)\|_2^2&=& \frac{1}{m^{2}}\sum_{0<k,k'\leq m}\langle \pi(g_k)f_0,\pi(g_{k'}f_0\rangle\\
&\leq& \frac{1}{m}\sum_{|k|\leq m}|\langle \pi(g_k)f_0,f_0\rangle| 
\end{eqnarray*}
First, when $f$ is spherical, so is $f_0$ in which case by Proposiiton \ref{p:smatrixdecay} and Lemma \ref{l:kakdiag} we can bound 
$$|\langle \pi(g_k)f_0,f_0\rangle|\ll \|f\|_2^2e^{-\alpha ck},$$
with $\alpha>0$ depending on the spectral gap and $c>0$ from Lemma \ref{l:kakdiag}. Plugging this bound back gives 
\begin{eqnarray*}
\|\beta^+_m(f_0)\|_2^2\ll  \frac{\|f\|_2^2}{m}\sum_{|k|\leq m}e^{-c\alpha k}\ll  \frac{\|f\|_2^2}{m},
\end{eqnarray*}
as claimed.

Next for a non-spherical smooth function, $f$, fix a parameter $1\leq M\leq m$ (to be determined later). For any $|k|\leq M$ bound $|\langle \pi(g_k)f_0,f_0\rangle|\leq \|f\|_2^2$ while for $|k|\geq M$, use Proposition \ref{p:matrixdecay} and Lemma \ref{l:kakdiag} to bound 
$$|\langle \pi(g_k)f_0,f_0\rangle| \ll \cS(f)^2e^{-\alpha c k},$$
to get
\begin{eqnarray*}
\|\beta_m^+(f_0)\|_2^2
&\ll& \frac{M}{m}\|f\|_2^2+\cS(f)^2\frac{1}{m}\sum_{M\leq |k|\leq m}e^{-\alpha c k}\\
&\ll& \frac{1}{m}(M\|f\|_2^2+  \cS(f)^2e^{-\alpha c M})
\end{eqnarray*}
Taking $M=\frac{2}{\alpha c}\log(\tfrac{\cS(f)}{\|f\|_2})$ we get that for all $m\geq 1$
\begin{eqnarray*}
\|\beta^+_m(f_0)\|_2^2
&\ll& \frac{\log(\tfrac{\cS(f)}{\|f\|_2})\|f\|_2^2}{m}\\
\end{eqnarray*}
as claimed.
\end{proof}

\section{Application to shrinking targets}
We now combine the general results from section \ref{s:Zdaction} and use the effective mean ergodic theorem to get  results about shrinking targets for unipotent $\Z^d$-actions and diagonalizable $\Z$-actions. 

We first set up our notation that will be fixed throughout this section. Here $H\leq G$ is either a discrete unipotent subgroup of rank $1\leq d<n$, that, without loss of generality we assume is given by \eqref{e:H} or a one parameter discrete diagonalizable group $H=\{g_k\}_{k\in \Z}$ (in which case $h_k=g_k$). In either case we let $H_m$ and $H_m^+$ denote the truncated orbit and forward orbit given in \eqref{e:Ht}, \eqref{e:Ht+} respectively. 

\subsection{Orbits hitting along a subsequence}
The following is a more general version of the second part of Theorem \ref{t:BCT} for multi-parameter unipotent actions.
\begin{prop}\label{p:multi1}
Let $H\cong \Z^d$ be unipotent and $\{B_m\}_{m\in \N}$ spherical shrinking targets.  For $d<2\rho$, if the sequence $\{m^d\mu(B_m)\}_{m\in \N}$ is unbounded then there is a subsequence $m_j$ such that $\lim_{j\to \infty}\frac{|xH^+_{m_j}\cap B_{m_j}|}{|H^+_{m_j}|\mu(B_{m_j})}= 1$ for a.e. $x\in \cX$ . For $d=2\rho$ this holds if we assume that $\{m^{d-\eta}\mu(B_m)\}_{m\in \N}$ is unbounded for some $\eta>0$ or if $B_1$ is precompact and $\frac{\mu(B_m)m^d}{(\log m)^2}$ is unbounded.
\end{prop}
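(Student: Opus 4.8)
The plan is to deduce the almost-everywhere asymptotic $\beta^+_{m_j}(\chi_{B_{m_j}})(x)\to\mu(B_{m_j})$ along a carefully chosen subsequence by combining the effective mean ergodic theorem of Corollary \ref{c:Lp} (or Corollary \ref{c:comp} in the precompact case) with a Borel--Cantelli argument through the set $\cC_{m,f}$ of atypical points defined in \eqref{e:CTB}, exactly as in Lemma \ref{l:CTB} but used only along one subsequence rather than to obtain an ``eventually always'' statement. First I would fix the target set $B_m$, put $f_m=\chi_{B_m}$, and use Chebyshev's inequality together with Corollary \ref{c:Lp} to bound $\mu(\cC_{m,f_m})$ by $\frac{4}{\mu(B_m)^2}\|\beta^+_m f_m-\mu(f_m)\|_2^2$, which after expanding the squared right-hand side of Corollary \ref{c:Lp} becomes a sum of terms of the shape $\frac{\log^2 m}{|H^+_m|\mu(B_m)}$ and $\frac{\mu(B_m)^{(s_k-\rho-\epsilon)/\rho}}{|H^+_m|^{2(\rho-s_k)/d}}$ over the finitely many exceptional parameters $s_k\in(\rho-\frac d2,\rho)$. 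Since $|H^+_m|\asymp m^d$, the first term is $\asymp\frac{\log^2 m}{m^d\mu(B_m)}$, and the hypothesis that $m^d\mu(B_m)$ is unbounded (or $m^{d-\eta}\mu(B_m)$ unbounded when $d=2\rho$, which kills the $\log^2$) lets me pick a subsequence $m_j$ along which this term is summable; one checks that along this same subsequence the exceptional-spectrum terms are also summable because the exponent of $\mu(B_m)^{-1}$ occurring there is strictly less than $1$ (this is where $s_k>\rho-\frac d2$ enters) while the power of $m$ in the denominator, $2(\rho-s_k)/d<1$, combined with a lower bound $\mu(B_m)\gg m^{-d}$ forced by unboundedness of $m^d\mu(B_m)$ along the subsequence, makes each such term decay geometrically.

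The delicate point is the \emph{construction of the subsequence} so that \emph{all} the finitely many terms are simultaneously summable. Concretely I would proceed as follows: by unboundedness, choose $m_j\to\infty$ with $m_j^d\mu(B_{m_j})\ge N_j$ for a rapidly growing sequence $N_j$ (say $N_j=j^2(\log m_j)^2$ suffices for the leading term, and in the $d=2\rho$ case use the stronger hypothesis to get $m_j^{d-\eta}\mu(B_{m_j})\to\infty$ and hence $m_j^d\mu(B_{m_j})\gg m_j^\eta\to\infty$, absorbing the logarithm). For the exceptional terms, note $\mu(B_{m_j})\le\mu(B_1)$ is bounded above and $\mu(B_{m_j})\ge N_j m_j^{-d}$ is bounded below, so each exceptional term is $\ll m_j^{-d\cdot(\rho+s_k-\epsilon)/(2\rho)\cdot(\ge 0)}\cdot m_j^{-2(\rho-s_k)}\cdot(\text{const})$ — after passing, if necessary, to a further subsequence along which $m_j$ grows geometrically, this is a convergent geometric-type series. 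Having arranged $\sum_j\mu(\cC_{m_j,f_{m_j}})<\infty$, Borel--Cantelli gives that for a.e. $x$ one has $|\beta^+_{m_j}(\chi_{B_{m_j}})(x)-\mu(B_{m_j})|<\tfrac12\mu(B_{m_j})$ for all large $j$; dividing by $\mu(B_{m_j})$ and recalling $\beta^+_{m_j}(\chi_{B_{m_j}})(x)=\frac{|xH^+_{m_j}\cap B_{m_j}|}{|H^+_{m_j}|}$ gives $\tfrac12\le\frac{|xH^+_{m_j}\cap B_{m_j}|}{|H^+_{m_j}|\mu(B_{m_j})}\le\tfrac32$ for a.e. $x$.

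To upgrade the constants $\tfrac12,\tfrac32$ to the exact limit $1$, I would replace $\tfrac12$ in the definition \eqref{e:CTB} by an arbitrary $\varepsilon>0$: the same computation shows $\mu(\{x:|\beta^+_m(\chi_{B_m})(x)-\mu(B_m)|\ge\varepsilon\mu(B_m)\})\ll_\varepsilon\|\beta^+_m f_m-\mu(f_m)\|_2^2/\mu(B_m)^2$, so for each fixed $\varepsilon$ the same subsequence works (the subsequence depends only on the sizes $\mu(B_m)$, not on $\varepsilon$), and then a diagonal argument over $\varepsilon=1/\ell$ produces a single subsequence $m_j$ and a single full-measure set on which $\frac{|xH^+_{m_j}\cap B_{m_j}|}{|H^+_{m_j}|\mu(B_{m_j})}\to1$. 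In the precompact case one uses Corollary \ref{c:comp} in place of Corollary \ref{c:Lp}, which has no exceptional-spectrum terms with a positive power of $\mu(B_m)$ in the numerator and a $\delta>0$ uniformly in the denominator, so the weaker hypothesis $m^d\mu(B_m)/(\log m)^2$ unbounded already suffices to extract the subsequence. The main obstacle is purely bookkeeping: verifying that the finitely many exceptional terms coming from the spectral gap are summable along the chosen subsequence under precisely the stated hypotheses (in particular the sharp role of $d<2\rho$ versus $d=2\rho$, and why $\mu(B_m)\gg m^{-d}$ along the subsequence is automatic), which requires tracking the exponents in Corollary \ref{c:Lp} with care but involves no new idea.
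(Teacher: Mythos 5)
Your proposal is correct and follows essentially the same route as the paper: bound $\|\beta^+_m\chi_{B_m}-\mu(B_m)\|_2$ via Corollary \ref{c:Lp} (Corollary \ref{c:comp} in the precompact case), use the lower bound $\mu(B_{m_j})\gg m_j^{-d}$ along the chosen subsequence together with $d<2\rho$ (or the stronger hypotheses when $d=2\rho$) to control the exceptional-spectrum terms, and conclude a.e.\ convergence along a subsequence. The only difference is cosmetic: the paper normalizes $f_m=\chi_{B_m}/\mu(B_m)$, shows $\|\beta^+_{m_j}f_{m_j}-1\|_2\to0$, and invokes the standard $L^2\Rightarrow$ a.e.-along-a-further-subsequence fact, whereas you unfold that same fact explicitly via Chebyshev, Borel--Cantelli on the sets $\cC_{m,f}$, and an $\varepsilon=1/\ell$ intersection (your displayed exponent for the exceptional term is slightly garbled, but the mechanism you identify is the right one).
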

\begin{proof}
For any $m\in \N$ let $f_m=\frac{\chi_{B_m}}{\mu(B_m)}$ and note that $\beta^+_m f_m(x)=\frac{|xH^+_m\cap B_m|}{|H^+_m|\mu(B_m)}$. When $d<2\rho$ by Corollary \ref{c:Lp} we have
$$\|\beta^+_mf_m-1\|\ll_\epsilon \frac{1}{\sqrt{\mu(B_m)m^d}}+ \sum_{s_k>\rho-\frac{d}{2}} \frac{1}{ \mu(B_m)^{\frac{\rho-s_k+\epsilon}{2\rho}}m^{\rho-s_k}}+\frac{ \log(m)}{\mu(B_m)^{\frac{d+2\epsilon}{4\rho}}m^{d/2}}.$$
For each $s_k$ we can rewrite
$$ \frac{1}{ \mu(B_m)^{\frac{\rho-s_k+\epsilon}{2\rho}}m^{\rho-s_k}}= \frac{1}{ [\mu(B_m)m^d]^{\frac{\rho-s_k+\epsilon}{2\rho}}m^{\rho-s_k-d\frac{\rho-s_k+\epsilon}{2\rho}}}.$$
Since $d<2\rho$ we can take  $\epsilon$ sufficiently small so that $\epsilon<\frac{2\rho-d}{4}$ and that $s_k\leq \rho(1- \frac{2d\epsilon}{\rho(2\rho-d)})$, implying that 
$\frac{\rho-s_k+\epsilon}{2\rho}>0$. Hence
$$\frac{1}{ \mu(B_m)^{\frac{\rho-s_k+\epsilon}{2\rho}}m^{\rho-s_k}}\leq \frac{1}{ [\mu(B_m)m^d]^{\frac{\rho-s_k+\epsilon}{2\rho}}},$$
and 
$$\frac{ \log(m)}{\mu(B_m)^{\frac{d+2\epsilon}{4\rho}}m^{d/2}}\leq \frac{1}{[\mu(B_m)m^d]^{\frac{d+2\epsilon}{4\rho}}}\frac{ \log(m)}{m^{\frac{2\rho-d}{4\rho}}}\ll \frac{1}{[\mu(B_m)m^d]^{\frac{d+2\epsilon}{4\rho}}}. $$
In particular, for a subsequence with  $m_j^d\mu(B_{m_j})\to \infty$ we have $\|\beta^+_{m_j}f_{m_j}-1\|\to 0$, and , after perhaps taking another subsequence, we get that  $\lim_{j\to \infty}\beta^+_{m_j}f_{m_j}(x)= 1$ for a.e. $x\in \cX$.

When $d=2\rho$ the main problem comes from the exceptional terms. In this case
$$ \frac{1}{ \mu(B_m)^{\frac{\rho-s_k+\epsilon}{2\rho}}m^{\rho-s_k}}= \frac{m^\epsilon}{ [\mu(B_m)m^d]^{\frac{\rho-s_k+\epsilon}{2\rho}}}.$$
In order to show that these terms go to zero we need to assume that $m^d\mu(B_m)\gg m^{\eta}$ for some $\eta>0$. This assumption will also take care of the extra $\log(m)$ terms and the proof follows as above.

Alternatively, if $\Omega=\overline{B_1}$ is compact,  we can use Corollary \ref{c:comp} to get
$$\|\beta^+_mf_m-1\|_2 \ll_\Omega \frac{\log(m)}{\sqrt{\mu(B)m^d}}+ \frac{1}{m^{d \delta}}.$$
Assuming that $\frac{\mu(B_m)m^d}{(\log m)^2}$ is unbounded we get that $\|\beta^+_{m_j}f_{m_j}-1\|_2\to 0$ for some subsequence and the result follows as before.
\end{proof}
When $H$ is diagonalizable the same argument using Theorem \ref{t:DMET} instead of Theorem \ref{t:SMET} gives
\begin{prop}\label{p:diag}
For $H=\{g_m\}_{m\in \Z}$ diagonalizable and $\{B_m\}_{m\in \N}$ spherical shrinking targets, if the sequence $\{m\mu(B_m)\}_{m\in \N}$ is unbounded then there is a subsequence $m_j$ such that $\lim_{j\to \infty}\frac{|xH^+_{m_j}\cap B_{m_j}|}{|H^+_{m_j}|\mu(B_{m_j})}= 1$ for a.e. $x\in \cX$ . 
\end{prop}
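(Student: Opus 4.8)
The plan is to run the argument of Proposition \ref{p:multi1} essentially verbatim, but with the diagonalizable mean ergodic estimate (the spherical case of Theorem \ref{t:DMET}) in place of Corollary \ref{c:Lp}. This is strictly easier: for a diagonalizable flow the matrix coefficients of spherical vectors decay exponentially (Proposition \ref{p:smatrixdecay} together with the Cartan estimate of Lemma \ref{l:kakdiag}), so the relevant sum is a convergent geometric series and there is no logarithmic loss and no exceptional spectrum to isolate.

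First I would set $f_m=\chi_{B_m}/\mu(B_m)$ for each $m\in\N$, so that $\mu(f_m)=1$ and $\beta^+_m f_m(x)=\frac{|xH^+_m\cap B_m|}{|H^+_m|\mu(B_m)}$ is exactly the quantity in the statement. Since $B_m$ is spherical, $f_m$ is a spherical function in $L^2(\G\bk G)$, and because $\beta^+_m$ fixes constants we have $\beta^+_m f_m-1=\beta^+_m(f_m-1)$, so the first bound in Theorem \ref{t:DMET} applies and gives
$$\|\beta^+_m f_m-1\|_2^2\ll \frac{\|f_m\|_2^2}{m}=\frac{1}{m\,\mu(B_m)},$$
where I used $\|f_m\|_2^2=\mu(B_m)^{-2}\int\chi_{B_m}\,d\mu=\mu(B_m)^{-1}$ and $|H^+_m|=m$ (for the $\Z$-action $H^+_m=\{g_1,\dots,g_m\}$).

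Next, since $\{m\mu(B_m)\}_{m\in\N}$ is unbounded, I would pick a subsequence $m_j$ with $m_j\mu(B_{m_j})\to\infty$; along it the displayed estimate yields $\|\beta^+_{m_j}f_{m_j}-1\|_2\to0$. Finally, $L^2$-convergence forces almost everywhere convergence along a further subsequence, so after passing to a further subsequence (still denoted $m_j$) we get $\lim_{j\to\infty}\beta^+_{m_j}f_{m_j}(x)=1$ for a.e. $x\in\cX$, which is the claim.

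There is essentially no obstacle here beyond bookkeeping: the one substantive point, namely that the geometric series $\sum_{|k|\le m}e^{-\alpha c k}$ is bounded so that $\|\beta^+_m f_m-1\|_2^2$ decays like $1/(m\mu(B_m))$ with no extra factors, is already built into Theorem \ref{t:DMET}. This is precisely why, unlike in Proposition \ref{p:multi1}, no hypothesis of the form $m^{1-\eta}\mu(B_m)$ unbounded or a precompactness assumption on $B_1$ is needed in the diagonalizable case.
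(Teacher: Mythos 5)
Your proposal is correct and is exactly the paper's argument: the paper proves Proposition \ref{p:diag} by noting that the proof of Proposition \ref{p:multi1} goes through verbatim with the spherical bound of Theorem \ref{t:DMET} replacing Corollary \ref{c:Lp}, which is precisely what you do (including the computation $\|f_m\|_2^2=\mu(B_m)^{-1}$ and the passage to a further subsequence for a.e.\ convergence). No gaps.
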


\subsection{Orbits eventually always hitting}
Next we consider general version of Theorem  \ref{t:asymp} for multi-parameter unipotent actions and one parameter diagonalizable actions.
Recalling Lemma \ref{l:CTBo} and Lemma \ref{l:CTB} we see that such a result will follow from effective estimates of the measures of $\cC_{m,\chi_{B_m}}$. For these we show
\begin{lem}\label{l:muCTB}
Let $B\subseteq \cX$ be spherical and let $f=\chi_B$. 
For $H\cong\Z^d$ unipotent with  $d<2\rho$ we have
$$\mu(\cC_{m,f})\ll_\epsilon \frac{1}{\mu(B)|H_m^+|}+
\sum_{s_k>\rho-\frac{d}{2}} \frac{1}{(\mu(B)|H_m^+|)^{1-\frac{s_k}{\rho}+\epsilon}|H_m^+|^{(\frac{2\rho}{d}-1)(1-\frac{s_k}{\rho})-\epsilon}}
$$
and for $d=2\rho$
$$\mu(\cC_{m,f})\ll_\epsilon \frac{\log^2(m)}{\mu(B)|H_m^+|}+\sum_{s_k} \frac{1}{(\mu(B)|H_m^+|)^{1-\frac{s_k}{\rho}+\epsilon}}
.$$
If $B$ is contained in some compact set $\Omega$ then 
$$\mu(\cC_{m,f})\ll_\Omega \frac{1}{|H_m^+|^\delta}+\left\lbrace\begin{array}{cc}\frac{1}{\mu(B)|H_m^+|} & d<2\rho
\\
&\\
 \frac{\log^2(m)}{\mu(B)|H_m^+|} & d=2\rho
\end{array}\right. .$$
\end{lem}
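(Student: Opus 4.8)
The plan is to reduce the lemma to Chebyshev's inequality applied to the $L^2$-estimates for $\|\beta_m^+ f-\mu(f)\|_2$ already obtained in Corollaries \ref{c:Lp} and \ref{c:comp}. Since for $f=\chi_B$ one has $\mu(f)=\mu(B)$ and $\cC_{m,f}=\{x:|\beta_m^+(f)(x)-\mu(f)|\geq\tfrac12\mu(f)\}$, Chebyshev's inequality gives $\mu(\cC_{m,f})\leq 4\mu(B)^{-2}\|\beta_m^+(f)-\mu(f)\|_2^2$, and it only remains to substitute the appropriate mean ergodic estimate and simplify exponents.

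For $H$ unipotent with $d<2\rho$ I would insert the bound of Corollary \ref{c:Lp}, expand the square using $(a+b+c)^2\ll a^2+b^2+c^2$ together with Cauchy--Schwarz on the finite sum over the exceptional parameters $s_k$, and divide by $\mu(B)^2$. The leading term becomes $\tfrac{1}{\mu(B)|H_m^+|}$; a summand $\mu(B)^{(\rho+s_k-\epsilon)/2\rho}|H_m^+|^{-(\rho-s_k)/d}$, once squared and divided by $\mu(B)^2$, is rewritten by collecting powers of $\mu(B)|H_m^+|$ and becomes exactly $\big((\mu(B)|H_m^+|)^{1-s_k/\rho+\epsilon}|H_m^+|^{(2\rho/d-1)(1-s_k/\rho)-\epsilon}\big)^{-1}$ after renaming $\epsilon$ by $\rho\epsilon$ (legitimate since $\rho$ is a fixed constant); and the auxiliary third term of Corollary \ref{c:Lp}, which is present precisely when some residual form $\vf_{k_0}$ sits at $s_{k_0}=\rho-d/2$, produces exactly the corresponding $s_{k_0}$-term of the displayed sum. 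The case $d=2\rho$ is handled in the same way: Corollary \ref{c:Lp} now contributes an extra $\log m$ in the leading term, which yields the $\tfrac{\log^2 m}{\mu(B)|H_m^+|}$ term, while $\rho-d/2=0$ makes every exceptional $s_k$ appear and $(2\rho/d-1)=0$ removes the extra power of $|H_m^+|$, leaving the stated sum. For $B\subseteq\Omega$ with $\Omega$ compact I would instead substitute Corollary \ref{c:comp}, whose exceptional contribution is bounded by $\|f\|_1=\mu(B)$ and hence collapses into the single term $\mu(B)|H_m^+|^{-\delta}$; squaring, dividing by $\mu(B)^2$ and renaming $2\delta$ as $\delta$ gives the claim, with the $\log^2 m$ retained only when $d=2\rho$.

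I expect no conceptual difficulty: the entire content is the bookkeeping of exponents. The one point deserving care is the treatment of a residual form lying exactly on the boundary $s=\rho-d/2$ (and of the origin when $d=2\rho$): one must verify that the extra term in Corollary \ref{c:Lp} matches the correct term of the sum in the lemma, and that the logarithmic factors originating from the borderline spherical-function decay $\phi_s(\exp(tX_0))\asymp t e^{-\rho t}$ are either absorbed into an arbitrarily small power of $|H_m^+|$ (using $|H_m^+|\asymp m^d$ and $\log^2 m\ll_\epsilon m^\epsilon$) or are kept explicitly, as in the $d=2\rho$ line of the statement.
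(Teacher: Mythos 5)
Your proposal is correct and follows essentially the same route as the paper: the paper's proof is precisely the Chebyshev bound $\mu(\cC_{m,f})\leq 4\mu(B)^{-2}\|\beta_m^+(f)-\mu(B)\|_2^2$ followed by substituting Corollary \ref{c:Lp} (resp.\ Corollary \ref{c:comp} in the compact case) and rearranging exponents, with the borderline term at $s_{k_0}=\rho-\tfrac{d}{2}$ disposed of by comparison with the main term just as you indicate. The only cosmetic difference is that the paper absorbs that borderline contribution by noting it is dominated by the $\mu(B)\log^2(m)/|H_m^+|$ term when $d<2\rho$, rather than trading the logarithm for a small power of $|H_m^+|$ as you suggest; both amount to the same $\epsilon$-bookkeeping.
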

\begin{proof}
Since for any $x\in \cC_{m,f}$ we have that 
$|\beta^+_m(f)(x)-\mu(B)|\geq \frac{\mu(B)}{2}$, we can bound
$$\|\beta^+_m(f)-\mu(B)\|_2^2\geq \tfrac14\mu(\cC_{m,f})|\mu(B)|^2,$$
On the other hand by Corollary \ref{c:Lp} we have 
$$\|\beta^+_m(f)-\mu(B)\|_2^2\ll_\epsilon \sum_{s_k>\rho-\frac{d}{2}} \frac{\mu(B)^{\frac{\rho+s_k-\epsilon}{\rho}}}{|H_m^+|^{\frac{2(\rho-s_k)}{d}}}+\frac{\mu(B)\log^2(m)}{|H_m^+|}+\frac{\log^2(m)\mu(B)^{2-\epsilon-\frac{d}{2\rho}}}{|H_m^+|},$$
where the $\log(m)$ in the second to last term is only needed when $d=2\rho$, in which case the last term does not exist. In particular, when $d<2\rho$ the last term is dominated by the second to last term so that
\begin{equation}\label{e:muCTB}
\mu(\cC_{m,f})\ll_\epsilon \sum_{s_k>\rho-\frac{d}{2}} \frac{1}{ \mu(B)^{\frac{\rho-s_k+\epsilon}{\rho}}|H_m^+|^{\frac{2(\rho-s_k)}{d}}}+\frac{1}{\mu(B)|H_m^+|} .\end{equation}
and when $d=2\rho$
\begin{equation}\label{e:muCTBmax}
\mu(\cC_{m,f})\ll_\epsilon \sum_{s_k>\rho-\frac{d}{2}} \frac{1}{ \mu(B)^{\frac{\rho-s_k+\epsilon}{\rho}}|H_m^+|^{\frac{2(\rho-s_k)}{d}}}+\frac{\log^2(m)}{\mu(B)|H_m^+|}.\end{equation}
Finally, if we assume that $B$ is contained in a compact set $\Omega$, using Corollary \ref{c:comp} instead of Corollary \ref{c:Lp}  gives 
$$\mu(\cC_{m,f}) \ll_\Omega \frac{1}{|H_m^+|^{\delta}}+ \frac{\log^2(m)}{\mu(B_m)|H_m^+|},$$
where the $\log(m)$ term can be removed if $d<2\rho$.
\end{proof}

Using this estimate we get the following 
\begin{thm}\label{t:main}
Let $H\cong \Z^d$ be unipotent and $\{B_m\}_{m\in \N}$ spherical shrinking targets.
For $d<2\rho$, if 
\begin{equation}\label{e:summable3}
\sum_{j=0}^\infty \frac{1}{2^{dj}\mu(B_{2^j})}<\infty,
\end{equation}
then $\cA_{\rm ah}(\cB)$ is of full measure.
If we further assume that $\mu(B_{2^{j}})\ll \mu(B_{2^{j+1}})$ then for a.e. $x\in \cX$ for all $m$ sufficiently large
$\frac{|xH_m^+\cap B_m|}{|H_m^+|}\asymp \mu(B_m)$. 
For $d=2\rho$ the same holds if we replace the assumption \eqref{e:summable3} by the stronger assumption that $m^d \mu(B_m)\gg t^\eta$ for some $\eta>0$, or by the assumption that $B_1$ is pre-compact and 
$$\sum_{j=0}^\infty \frac{j^2}{2^{dj}\mu(B_{2^j})}<\infty.$$
\end{thm}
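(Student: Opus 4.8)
The plan is to combine the measure estimates of Lemma \ref{l:muCTB} with the general criteria of Lemma \ref{l:CTBo} and Lemma \ref{l:CTB}, specialized to the dyadic subsequence $m_j = 2^j$. First I would handle the case $d<2\rho$. Setting $f_m = \chi_{B_m}$, I note $\cC_{m,B_m}^o \subseteq \cC_{m,f_m}$, so by Lemma \ref{l:CTBo} it suffices to show $\sum_j \mu(\cC_{2^{j-1}, f_{2^j}}) < \infty$. Applying Lemma \ref{l:muCTB} with $B = B_{2^j}$ and $m$ of size $2^{j-1}$ (so $|H_m^+| \asymp 2^{d(j-1)} \asymp 2^{dj}$), the main term is $\frac{1}{\mu(B_{2^j}) 2^{dj}}$, which is summable precisely by hypothesis \eqref{e:summable3}. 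For each exceptional term indexed by $s_k > \rho - d/2$, the exponent of $|H_m^+|$ is $(\frac{2\rho}{d}-1)(1-\frac{s_k}{\rho}) - \epsilon$, which is strictly positive once $\epsilon$ is chosen small (using $d < 2\rho$ and $s_k < \rho$); so that term is bounded by $\frac{1}{(\mu(B_{2^j})2^{dj})^{1-s_k/\rho+\epsilon}}\cdot\frac{1}{2^{j[(\frac{2\rho}{d}-1)(1-s_k/\rho)-\epsilon]}}$. Since $\mu(B_{2^j})2^{dj} \gg 1$ eventually (by summability of its reciprocal), the first factor is bounded, and the second factor is geometrically decaying in $j$, hence summable. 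This gives $\mu(\cA_{\rm ah}(\cB)) = 1$.

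Next, for the asymptotic count under the additional doubling assumption \eqref{e:double}, I would apply Lemma \ref{l:CTB} with the functions $f_m = \chi_{B_m}$, which are decreasing and satisfy $\mu(f_{2^j}) \ll \mu(f_{2^{j+1}})$ by hypothesis. The lower bound $\beta_m^+(f_m)(x) \geq \frac{\mu(f_m)}{C 2^{d+1}}$ for a.e. $x$ and all large $m$ follows once $\sum_j \mu(\cC_{2^{j-1}, f_{2^j}}) < \infty$, which is exactly what was just established. For the matching upper bound I need $\sum_j \mu(\cC_{2^{j+1}, f_{2^j}}) < \infty$; this is the same computation with $m \asymp 2^{j+1}$ in place of $2^{j-1}$, which only changes the implied constants, so it too converges. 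Together these give $\frac{|xH_m^+ \cap B_m|}{|H_m^+|} = \beta_m^+(\chi_{B_m})(x) \asymp \mu(B_m)$ for a.e. $x$ and all large $m$, as claimed.

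For the boundary case $d = 2\rho$, the obstruction is that the exceptional terms in Lemma \ref{l:muCTB} no longer carry the extra decaying power of $|H_m^+|$: they read $\frac{1}{(\mu(B)|H_m^+|)^{1-s_k/\rho+\epsilon}}$ with no compensating factor. Under the stronger hypothesis $m^d\mu(B_m) \gg m^\eta$, each such term along $m = 2^j$ is $\ll \frac{1}{2^{\eta j(1-s_k/\rho+\epsilon)}}$, which is geometrically summable; the $\log^2 m$ factor in the main term is likewise absorbed by the polynomial gain. Alternatively, if $B_1$ is precompact, Lemma \ref{l:muCTB} gives $\mu(\cC_{m,f}) \ll_\Omega |H_m^+|^{-\delta} + \frac{\log^2 m}{\mu(B_m)|H_m^+|}$, with no exceptional sum at all; the first term is summable along $2^j$ since it is geometric, and the second is summable precisely under $\sum_j \frac{j^2}{2^{dj}\mu(B_{2^j})} < \infty$ (as $\log^2(2^j) \asymp j^2$). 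In both sub-cases the asymptotic count then follows from Lemma \ref{l:CTB} exactly as before, using \eqref{e:double}.

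The main obstacle I anticipate is purely bookkeeping: verifying that the exponent $(\frac{2\rho}{d}-1)(1-\frac{s_k}{\rho}) - \epsilon$ is strictly positive for an admissible choice of $\epsilon$ uniformly over the finitely many exceptional $s_k$, and confirming that the factor $(\mu(B_{2^j})|H_{2^j}^+|)^{1-s_k/\rho+\epsilon}$ is bounded below (equivalently, that $\mu(B_{2^j})2^{dj}$ is eventually bounded away from zero, which follows from \eqref{e:summable3}). Once one is careful that $\epsilon$ can be taken small enough to make all exceptional exponents positive while keeping $1 - s_k/\rho + \epsilon > 0$, everything reduces to comparing geometric series against the hypothesized convergent series, and there is no genuinely new analytic input beyond Lemmas \ref{l:CTBo}, \ref{l:CTB}, and \ref{l:muCTB}.
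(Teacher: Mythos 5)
Your proposal is correct and follows essentially the same route as the paper's proof: specialize Lemmas \ref{l:CTBo} and \ref{l:CTB} to the dyadic subsequence, feed in the bounds of Lemma \ref{l:muCTB} with $b_{2^j}=2^{dj}\mu(B_{2^j})\to\infty$ forced by \eqref{e:summable3}, choose $\epsilon$ small so the finitely many exceptional exponents $(\tfrac{2\rho}{d}-1)(1-\tfrac{s_k}{\rho})-\epsilon$ stay positive (giving geometric decay), and handle $d=2\rho$ via the polynomial-gain or precompact alternatives exactly as the paper does. No gaps beyond the bookkeeping you already flagged.
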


\begin{proof}
Let $f_m=\chi_{B_m}$ and let $b_m=m^d\mu(B_m)$.
From Lemma \ref{l:muCTB} we have that 
$$\mu(\cC_{2^{j\pm1},f_{2^j}})\ll_\epsilon \frac{1}{b_{2^j}}+
\sum_{s_k>\rho-\frac{d}{2}} \frac{1}{(b_{2^j})^{1-\frac{s_k}{\rho}+\epsilon}2^{jd[(\frac{2\rho}{d}-1)(1-\frac{s_k}{\rho})-\epsilon]}}
$$
The assumption that $\sum_j \frac{1}{b_{2^j}}<\infty$ implies that $b_{2^j}\to\infty$, so in particular $b_{2^j}\geq 1$ for $j$ sufficiently large. 
Taking $\epsilon$ sufficiently small so that $d[(\frac{2\rho}{d}-1)(1-\frac{s_k}{\rho})-\epsilon]\geq \delta>0$ for all $s_k$, we can  bound
$$\frac{1}{(b_{2^j})^{1-\frac{s_k}{\rho}+\epsilon}2^{jd[(\frac{2\rho}{d}-1)(1-\frac{s_k}{\rho})-\epsilon]}}\ll \frac{1}{2^{\delta j}}.$$
implying that 
$$\sum_{j}\mu(\cC_{2^{j\pm1},f_{2^{j}}})\ll \sum_j (\frac{1}{2^{\delta j}}+ \frac{1}{b_{2^j}})<\infty.$$
Since $\mu(\cC_{2^{j\pm 1},B_{2^{j}}}^o)\leq \mu(\cC_{2^{j\pm 1},f_{2^{j}}})$ the results follow from Lemma \ref{l:CTBo} and Lemma \ref{l:CTB} respectively.

Next, when $d=2\rho$ we have the weaker bound
$$\mu(\cC_{2^{j\pm 1},f_{2^j}})\ll_\epsilon \frac{j^2}{b_{2^j}}+\sum_{s_k} \frac{1}{(b_{2^j})^{1-\frac{s_k}{\rho}+\epsilon}}\ll \frac{j^2}{b_{2^j}}+\frac{1}{(b_{2^j})^\delta},$$
where $\delta>0$ is sufficiently small so that $1-\frac{s_k}{\rho}>\delta$ for all exceptional terms. In particular, if we assume that $b_m\gg m^\eta$ for some $\eta>0$ then $\sum_{j}\mu(\cC_{2^{j\pm 1},B_{2^{j}}})<\infty$ and the result follows as before.
Alternatively, if $B_1$ is pre-compact, we can use the bound
$$\mu(\cC_{2^{j\pm 1},f_{2^j}})\ll \frac{j^2}{b_{2^j}}+\frac{1}{2^{\delta j}}
$$
so $\mu(\cC_{2^{j\pm 1},f_{2^j}})$ is summable when $\frac{j^2}{b_{2^j}}$ is summable.
\end{proof}

When $H$ is diagonalizable we get a similar result also for non-spherical sets.
In this case we have the following Lemma.
\begin{lem}\label{l:muCmfD}
When $H$ is diagonalizable, for any spherical  $f\in L^2(\G\bk G)$ we have 
$$\mu(\cC_{m,f})\ll \frac{ \|f\|^2_2}{m|\mu(f)|^2}$$
and for smooth $f\in L^2(\G\bk G)$
$$\mu(\cC_{m,f})\ll \frac{\log(\frac{\cS(f)}{\|f\|_2}) \|f\|_2^2}{m|\mu(f)|^2}$$
\end{lem}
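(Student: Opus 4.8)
The plan is to obtain both estimates from the effective mean ergodic theorem (Theorem \ref{t:DMET}) via a one‑line Chebyshev argument, exactly as in the opening of the proof of Lemma \ref{l:muCTB}, but now feeding in the diagonalizable bound instead of Corollary \ref{c:Lp}.

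First I would recall the definition \eqref{e:CTB}: $\cC_{m,f}=\{x\in\cX:\ |\beta^+_m(f)(x)-\mu(f)|\ge \tfrac12\mu(f)\}$. On $\cC_{m,f}$ the pointwise bound $|\beta^+_m(f)(x)-\mu(f)|^2\ge \tfrac14|\mu(f)|^2$ holds, so integrating over $\cC_{m,f}$ gives
$$\|\beta^+_m(f)-\mu(f)\|_2^2\ \ge\ \tfrac14\,|\mu(f)|^2\,\mu(\cC_{m,f}),$$
and hence $\mu(\cC_{m,f})\le 4\|\beta^+_m(f)-\mu(f)\|_2^2\big/|\mu(f)|^2$.

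Next I would invoke Theorem \ref{t:DMET}. When $f$ is spherical, $\|\beta^+_m f-\mu(f)\|_2^2\ll \|f\|_2^2/m$, which yields $\mu(\cC_{m,f})\ll \|f\|_2^2\big/\big(m|\mu(f)|^2\big)$. When $f$ is merely smooth, the second bound of Theorem \ref{t:DMET} gives $\|\beta^+_m f-\mu(f)\|_2^2\ll \|f\|_2^2\log\!\big(\cS(f)/\|f\|_2\big)/m$, which yields the second claimed estimate after dividing by $|\mu(f)|^2$.

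I expect no real obstacle here: the argument is purely a Chebyshev/Markov inequality combined with a quoted $L^2$ mean ergodic estimate, and the only point worth a remark is that the extra $\log(\cS(f)/\|f\|_2)$ factor in the smooth case is harmless in the intended applications, since it grows only logarithmically in the Sobolev norm of the approximating functions.
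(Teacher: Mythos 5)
Your proposal is correct and is essentially identical to the paper's proof: Chebyshev's inequality applied to the set $\cC_{m,f}$ gives $\mu(\cC_{m,f})\le 4\|\beta^+_m(f)-\mu(f)\|_2^2/|\mu(f)|^2$, and then the two bounds of Theorem \ref{t:DMET} are quoted for the spherical and smooth cases respectively. No gaps.
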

\begin{proof}
As before for any $x\in \cC_{m,f}$ we have that 
$|\beta^+_m(f)(x)-\mu(f)|\geq \frac{\mu(f)}{2}$ so we can bound
$$\|\beta^+_m(f)-\mu(f)\|_2^2\geq \tfrac14\mu(\cC_{m,B})|\mu(f)|^2.$$
Now use Theorem \ref{t:DMET} to bound
$$\|\beta^+_m(f)-\mu(f)\|_2^2\ll \frac{\|f\|_2^2}{m},$$
for spherical $f$ and 
$$\|\beta^+_m(f)-\mu(f)\|_2^2\ll \frac{\log(\tfrac{\cS(f)}{\|f\|_2})\|f\|_2^2}{m},$$
when $f$ is non spherical but smooth.
\end{proof}

\begin{prop}\label{p:diag1}
For $H=\{g_m\}_{m\in \N}$  diagonalizable and
$\cB=\{B_m\}_{m\in \N}\subseteq \G\bk G$ a family of shrinking targets. 
If the shrinking targets are spherical then  \eqref{e:summable} implies that 
$\mu(\cA_{\rm ah}(\cB))=1$ and further assuming  \eqref{e:double},
implies that for a.e. $x\in \cX$ for all $m$ sufficiently large
$\frac{|xH_m^+\cap B_m|}{|H_m^+|}\asymp \mu(B_m)$. 
For non-spherical regular shrinking targets, the same conclusion holds after replacing \eqref{e:summable} by  \eqref{e:summable2}.
\end{prop}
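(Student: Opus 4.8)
The plan is to mimic the proof of Theorem \ref{t:main}, replacing the unipotent estimate of Lemma \ref{l:muCTB} by its diagonalizable counterpart Lemma \ref{l:muCmfD}, and recalling that for the $\Z$-action one has $|H_m^+|=m$. \emph{Spherical case.} Take $f_m=\chi_{B_m}$, a decreasing sequence with $\|f_m\|_2^2=\mu(f_m)=\mu(B_m)$. The spherical half of Lemma \ref{l:muCmfD} gives
$$\mu(\cC_{2^{j\pm1},f_{2^j}})\ll \frac{\|f_{2^j}\|_2^2}{2^{j\pm1}\mu(f_{2^j})^2}\asymp \frac{1}{2^{j}\mu(B_{2^j})},$$
so \eqref{e:summable} forces $\sum_j\mu(\cC_{2^{j\pm1},f_{2^j}})<\infty$. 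Since $f_m=\chi_{B_m}$ we have $\cC_{2^{j-1},B_{2^j}}^o\subseteq \cC_{2^{j-1},f_{2^j}}$, so Lemma \ref{l:CTBo} yields $\mu(\cA_{\rm ah}(\cB))=1$. If in addition \eqref{e:double} holds then $\{f_{2^j}\}$ satisfies the doubling hypothesis of Lemma \ref{l:CTB}, and applying both of its parts (with $d=1$) gives $\beta_m^+(\chi_{B_m})(x)\asymp\mu(B_m)$, i.e. $\tfrac{|xH_m^+\cap B_m|}{|H_m^+|}\asymp\mu(B_m)$ for a.e. $x$ and all large $m$.

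\emph{Non-spherical regular case, eventually always hitting.} For each $m$ fix the smooth functions $f^\pm_m$ provided by the regularity hypothesis, so that $0\le f^-_m\le\chi_{B_m}\le f^+_m\le1$ with $\mu(f^\pm_m)\asymp\mu(B_m)$ and $\cS(f^\pm_m)\ll\mu(B_m)^{-\alpha}$. Since $0\le f^\pm_m\le1$ we have $\|f^\pm_m\|_2^2\le\int f^\pm_m\ll\mu(B_m)$, while Cauchy--Schwarz on the probability space $\cX$ gives $\|f^\pm_m\|_2\ge\int f^\pm_m\gg\mu(B_m)$; hence $\cS(f^\pm_m)/\|f^\pm_m\|_2\ll\mu(B_m)^{-\alpha-1}$ and the logarithmic factor in the smooth half of Lemma \ref{l:muCmfD} is $\ll|\log\mu(B_m)|$. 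Therefore
$$\mu(\cC_{m,f^\pm_m})\ll\frac{\log(\cS(f^\pm_m)/\|f^\pm_m\|_2)\,\|f^\pm_m\|_2^2}{m\,\mu(f^\pm_m)^2}\ll\frac{|\log\mu(B_m)|}{m\mu(B_m)},$$
so $\mu(\cC_{2^{j\pm1},f^\pm_{2^j}})\ll\frac{|\log\mu(B_{2^j})|}{2^j\mu(B_{2^j})}$ is summable by \eqref{e:summable2}. Since $f^-_m\le\chi_{B_m}$ we still have $\cC_{2^{j-1},B_{2^j}}^o\subseteq\cC_{2^{j-1},f^-_{2^j}}$, so Lemma \ref{l:CTBo} gives $\mu(\cA_{\rm ah}(\cB))=1$.

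For the asymptotic conclusion one cannot invoke Lemma \ref{l:CTB} directly, since the approximants $\{f^\pm_m\}$ need not be monotone in $m$; \emph{this is the main technical point}. I would keep the monotone family $\chi_{B_m}$ for the dyadic decomposition and use $f^\pm_{2^{j-1}},f^\pm_{2^j}$ only at the dyadic endpoints: for $m\in[2^{j-1},2^j]$ the nesting $B_{2^j}\subseteq B_m\subseteq B_{2^{j-1}}$ together with $f^-_{2^j}\le\chi_{B_{2^j}}$ and $\chi_{B_{2^{j-1}}}\le f^+_{2^{j-1}}$ yields
$$2^{j-1}\beta^+_{2^{j-1}}(f^-_{2^j})(x)\le m\,\beta^+_m(\chi_{B_m})(x)\le 2^j\beta^+_{2^j}(f^+_{2^{j-1}})(x).$$
Using \eqref{e:double} to get $\mu(f^\pm_{2^{j-1}})\asymp\mu(f^\pm_{2^j})\asymp\mu(B_m)$ for all $m$ in the block, the set of $x$ for which $\beta^+_m(\chi_{B_m})(x)$ fails to be comparable to $\mu(B_m)$ for some such $m$ is contained in the union of $\cC_{2^{j-1},f^-_{2^j}}$ with the analogous upper-deviation set for $\beta^+_{2^j}(f^+_{2^{j-1}})$; both measures are bounded by Lemma \ref{l:muCmfD} and are summable by \eqref{e:summable2}. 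Borel--Cantelli then gives $\beta^+_m(\chi_{B_m})(x)\asymp\mu(B_m)$ for a.e.\ $x$ and all large $m$, which is the claim.

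Thus the only genuinely new ingredient compared with Theorem \ref{t:main} is this bookkeeping with the smooth sandwiching functions, together with the observation that in the diagonalizable setting the Sobolev norm enters Lemma \ref{l:muCmfD} only logarithmically -- which is exactly why the hypothesis must be strengthened from \eqref{e:summable} to \eqref{e:summable2}. I expect the monotonicity issue above to be the only step that requires real care; everything else is a routine transcription of the unipotent argument.
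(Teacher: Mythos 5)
Your proposal is correct and follows essentially the same route as the paper: use Lemma \ref{l:muCmfD} to bound $\mu(\cC_{m',f})$ (with only a logarithmic loss from the Sobolev norm, which is why \eqref{e:summable2} replaces \eqref{e:summable}), feed this into Lemma \ref{l:CTBo} for the eventually-always-hitting statement, and sandwich $\chi_{B_m}$ between the regular approximants $f_m^\pm$ for the asymptotics. The one place you diverge is the monotonicity point you flag: the paper simply invokes Lemma \ref{l:CTB} with the family $\{f_m^\pm\}$, even though that lemma is stated for a decreasing sequence, whereas you redo the dyadic bookkeeping by hand, exploiting the monotonicity of $\chi_{B_m}$ and using $f^-_{2^j}$, $f^+_{2^{j-1}}$ only at the endpoints of each block; this is a slightly more careful rendering of the same argument and correctly patches that small imprecision.
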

\begin{proof}
For spherical shrinking targets using Lemma \ref{l:muCmfD} for $f_m=\chi_{B_m}$ we get that 
$\mu(\cC^o_{m',f_m})\leq \mu(\cC_{m',f_m})\ll \frac{1}{m'\mu(B_m)},$
and the proof follows as in the unipotent case.

Next, assuming the sets are regular let  $0\leq f_m^-\leq \chi_{B_m}\leq f_m^+\leq 1$ approximate $B_m$ with  $\mu(B_m)\asymp \mu(f_m^\pm)$ and $\cS(f_m^\pm)\ll \mu(B_m)^{-\alpha}$. Applying Lemma  \ref{l:muCmfD} for $f_m^\pm$ we get that
$\mu(\cC_{m',f_m^\pm})\ll_\alpha \frac{|\log(\mu(B_{m})|}{ m'\mu(B_{m})}$ and since 
$\cC^o_{m', B_m}\subseteq \cC_{m',f_m^-}$ we also have that $\mu(\cC^o_{m', B_m})\ll\frac{|\log(\mu(B_{m})|}{ m'\mu(B_{m})}$. In particular,  \eqref{e:summable2} implies that $\sum_j\mu(\cC^o_{2^{j+1},B_{2^j}})<\infty$ and Lemma \ref{l:CTBo} implies that $\cA_{\rm ah}(\cB)$ is of full measure.

Finally, assuming  \eqref{e:double}  we get that 
$\mu(f^\pm_{2^j})\ll\mu(B_{2^j})\ll  \mu(B_{2^{j+1}})\ll  \mu(f^\pm_{2^{j+1}})$, 
and using Lemma \ref{l:CTB} we get that \eqref{e:summable2} implies $\beta^+_m(f_m^\pm)(x)\asymp \mu(f_m^\pm)\asymp \mu(B_m)$ for a.e. $x$. Since for all $x\in \cX$
$$\beta^+_m(f_m^-)(x)\leq\frac{|xH_m^+\cap B_m|}{|H_m^+|}\leq \beta^+_m(f_m^+)(x),$$
this concludes the proof.
 \end{proof}

\begin{rem}
We note that in order to show that $\cA_{\rm ah}(\cB)$ is of full measure we only used the approximation $0\leq f^-\leq \chi_B$.
It is thus enough to assume that all sets are  $(c,\alpha)$-inner regular, where a set $B$ is $(c,\alpha)$-inner regular if there is a smooth function $f\in C^\infty(\G\bk G)$  satisfying that $0\leq f\leq \chi_B$ with $\mu(B)\leq c\mu(f)$ and  $\cS(f)\leq \mu(B)^{-\alpha}$. 
\end{rem}

\subsection{Quasi-Independence}
The method used above relying on the effective mean ergodic theorem gives very strong results, however, it has the shortcoming that it only applies if the sequence $\{m\mu(B_m)\}_{m\in \N}$ is unbounded. For sequences with $\{m\mu(B_m)\}_{m\in \N}$ bounded (and $\sum_{m}\mu(B_m)=\infty$) we need to take a different approach using the  more standard notion of  quasi-independence.

The main ingredient is the following result going back to Schmidt. Let $\cF=\{f_m\}_{m\in \N}$ denote a sequence of functions on the probability space $(\cX,\mu)$ taking values in $[0,1]$.
For $M\in \N$ let $E^\cF_M=\sum_{m\leq M} \mu(f_m)$ and $S^\cF_M(x)=\sum_{m\leq M} f_m(x)$.
We also let 
$$R^\cF_{m,m'}=\mu(f_mf_{m'})-\mu(f_m)\mu(f_{m'}).$$
\begin{prop}\label{p:SP}(\cite[Lemma 2.6]{KleinbockMargulis1999})\\
Assuming that for some constant $C>0$, for all $M,N\in \N$
\begin{equation}\label{e:QI}
|\sum_{m,m'=M}^NR^\cF_{m,m'}|\leq C\sum_{m=M}^N \mu(f_m),
\end{equation}
then for a.e. $x\in \cX$ for any $\epsilon>0$
$$S_M^\cF(x)=E_M^\cF+O_\epsilon\left(\sqrt{E_M^\cF}\log^{\tfrac32+\epsilon}(E^\cF_M)\right).$$
\end{prop}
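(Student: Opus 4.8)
The statement is a quasi-independence strong law of G\'al--Koksma / Rademacher--Menshov type, and I would prove it along the classical lines (this is essentially how \cite[Lemma 2.6]{KleinbockMargulis1999} is obtained). The key observation is that \eqref{e:QI} is \emph{exactly} a second-moment bound: setting $g_m=f_m-\mu(f_m)$ one has $R^\cF_{m,m'}=\langle g_m,g_{m'}\rangle_{L^2(\cX)}$, so \eqref{e:QI} reads
$$\Big\|\sum_{m=M}^N g_m\Big\|_2^2=\sum_{m,m'=M}^N R^\cF_{m,m'}\le C\sum_{m=M}^N\mu(f_m)=C\,(E^\cF_N-E^\cF_{M-1}).$$
(The constraint $0\le f_m\le1$ makes the diagonal $R^\cF_{m,m}\le\mu(f_m^2)\le\mu(f_m)$ harmless.) One may also assume $E^\cF_M\to\infty$, since otherwise $\sum_m\mu(f_m)<\infty$ and Borel--Cantelli already gives $S^\cF_M(x)=O_x(1)$ for a.e.\ $x$, which is stronger than claimed.

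First I would upgrade this to a maximal inequality by the standard dyadic/chaining argument, carried out on the scale of the nondecreasing quantity $E^\cF$ rather than on the index $m$: for $a<b$ with $\Delta:=E^\cF_b-E^\cF_a$,
$$\Big\|\max_{a\le M\le b}\big|S^\cF_M-S^\cF_a-(E^\cF_M-E^\cF_a)\big|\Big\|_2^2\ll\big(\log(\Delta+2)\big)^2\,\Delta .$$
One partitions $[a,b]$ into consecutive subintervals across each of which $E^\cF$ grows by a bounded factor, arranges them in a binary tree of depth $O(\log\Delta)$, applies the variance bound above at each node, and sums with the resulting $(\log\Delta)^2$ loss; crucially only increments of $E^\cF$ enter — never cardinalities of index ranges, which can be arbitrarily large when many $\mu(f_m)$ are tiny.

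Next I would run Borel--Cantelli along a geometric subsequence. Let $M_k=\min\{M:E^\cF_M\ge 2^k\}$; since consecutive increments satisfy $\mu(f_m)\le1$, one gets $E^\cF_{M_k}\asymp 2^k$ and $E^\cF_{M_{k+1}}-E^\cF_{M_k}\asymp 2^k$. Chebyshev applied to the variance bound at $M_k$ gives $\mu\big(|S^\cF_{M_k}-E^\cF_{M_k}|>2^{k/2}k^{1/2+\epsilon}\big)\ll k^{-1-2\epsilon}$, and Chebyshev applied to the maximal inequality on the block $[M_k,M_{k+1}]$ gives $\mu\big(\max_{M_k\le M\le M_{k+1}}|S^\cF_M-S^\cF_{M_k}-(E^\cF_M-E^\cF_{M_k})|>2^{k/2}k^{3/2+\epsilon}\big)\ll k^{-1-2\epsilon}$; both are summable in $k$. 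By Borel--Cantelli, for a.e.\ $x$ both events fail for all large $k$. For arbitrary $M$, choose $k$ with $M_k\le M\le M_{k+1}$; combining the two bounds and using $E^\cF_M\asymp E^\cF_{M_k}\asymp 2^k$ on the block yields $S^\cF_M(x)-E^\cF_M\ll_{\epsilon,x}2^{k/2}k^{3/2+\epsilon}\asymp\sqrt{E^\cF_M}\,(\log E^\cF_M)^{3/2+\epsilon}$, which is the claim; the monotonicity of $M\mapsto S^\cF_M$ is implicitly used in that the block estimate controls every intermediate $M$.

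The main obstacle — and the only step that is more than bookkeeping — is producing the maximal inequality with the right power of the logarithm. The final exponent $\tfrac32+\epsilon$ is not arbitrary: it is the sum of the $1$ coming from the $(\log\Delta)^2$ loss in the Rademacher--Menshov maximal inequality (after taking a square root) and the $\tfrac12+\epsilon$ of slack one must leave so that the Borel--Cantelli series $\sum_k k^{-1-2\epsilon}$ converges. Making these two sources line up, and in particular performing the chaining on the $E^\cF$-scale so that index cardinalities never appear, is where the care is required; everything else is Chebyshev and the geometric choice of subsequence.
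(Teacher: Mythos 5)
Your argument is correct: the paper does not prove this proposition but quotes it as \cite[Lemma 2.6]{KleinbockMargulis1999}, which is Schmidt's quasi-independence theorem, and your proof is exactly the classical G\'al--Koksma/Schmidt argument behind that citation (second-moment bound, Rademacher--Menshov-type chaining, Chebyshev plus Borel--Cantelli along a subsequence on which $E^\cF$ doubles). Your emphasis on performing the dyadic decomposition on the $E^\cF$-scale (together with monotonicity of $S^\cF_M$ and $E^\cF_M$ inside the finest blocks, which contributes only a harmless additive $O(1)$) is indeed the point that makes the error come out as $\sqrt{E^\cF_M}\log^{3/2+\epsilon}E^\cF_M$ rather than involving $\log M$, which matters here since the $\mu(f_m)$ may be very small.
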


In particular, given a one parameter group, $\{g_m\}_{m\in\Z}$, and a  sequence of spherical sets, $\{B_m\}_{m\in \N}$, consider the functions $f_m(x)=\chi_{B_m}(xg_m)$ so that 
$S_M^\cF(x)=\#\{m\leq M: xg_m\in B_m\}$, and hence the condition \eqref{e:QI} implies that $\{m: xg_m\in B_m\}$ is unbounded for a.e. $x$ whenever $\sum_m \mu(B_m)=\infty$ (and moreover \eqref{e:SBC} holds).

We will show that the condition \eqref{e:QI} holds for any sequence of spherical shrinking targets with $m\mu(B_m)$ bounded, thus handling  all the missing cases not covered by the effective mean ergodic theorem.
\begin{prop}\label{p:UQI}
Let $G=\Isom(\bH^n)$ with $n\geq 3$ and $\G\bk G$ a lattice.
Let $\{B_m\}_{m\in \N}$ denote a sequence of spherical shrinking targets in $\cX=\G\bk G$ and assume that $m\mu(B_m)$ is uniformly bounded. Let $\{g_m\}_{m\in \Z}$ be an unbounded one parameter group, let $\cF=\{f_m\}_{m\in \N}$ with  $f_m(x)=\chi_{B_m}(xg_m)$. Then there is some $C>0$ such that for all $N>M\geq 1$
$$\sum_{m,m'=M}^NR^\cF_{m,m'}\leq C\sum_{m=M}^N \mu(f_m).$$
\end{prop}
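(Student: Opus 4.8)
The plan is to estimate the pair correlations $R^\cF_{m,m'}$ via the spectral decomposition of $L^2(\G\bk G)$ together with the decay of spherical matrix coefficients (Proposition \ref{p:smatrixdecay}) and the Cartan decompositions of the elements $g_k$ (Lemma \ref{l:kakunipotent} in the unipotent case, Lemma \ref{l:kakdiag} in the diagonalizable case). First, since the action is measure-preserving and $g_mg_{m'}^{-1}=g_{m-m'}$, for $m<m'$, writing $k=m'-m$, one has
\[
R^\cF_{m,m'}=\langle \pi(g_k)\chi_{B_{m'}},\chi_{B_m}\rangle-\mu(B_m)\mu(B_{m'})=\langle \pi(g_k)\psi_{m'},\psi_m\rangle ,
\]
where $\psi_l:=\chi_{B_l}-\mu(B_l)\in L^2_0(\G\bk G)$ is spherical. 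The diagonal terms are harmless, since $R^\cF_{m,m}=\mu(B_m)-\mu(B_m)^2\le\mu(f_m)$, so the task reduces to bounding $2\sum_{M\le m<m'\le N}R^\cF_{m,m'}$. Writing $\psi_l=\psi_l^0+\sum_j\langle\chi_{B_l},\vf_j\rangle\vf_j$ with $\psi_l^0\in L^2_{\mathrm{tmp}}$ and the finite sum over the exceptional forms $\vf_j\in\pi_{s_j}$, and using that $\pi(g_k)$ preserves this $G$-invariant orthogonal splitting while the $\vf_j$ are $K$-invariant, $R^\cF_{m,m'}$ breaks into a tempered contribution and, for each exceptional parameter $s$, a contribution $\phi_s(a_{t_k})\sum_{s_j=s}\langle\chi_{B_{m'}},\vf_j\rangle\langle\chi_{B_m},\vf_j\rangle$, where $g_k=k_1a_{t_k}k_2$.

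For the tempered part, Proposition \ref{p:smatrixdecay} gives $|\langle\pi(g_k)\psi^0_{m'},\psi^0_m\rangle|\ll t_k e^{-\rho t_k}\sqrt{\mu(B_m)\mu(B_{m'})}$, which in the unipotent case is $\ll\log(k)\,k^{-2\rho}\sqrt{\mu(B_m)\mu(B_{m'})}$ by Lemma \ref{l:kakunipotent}; since $2\rho=n-1\ge2$ when $n\ge3$ this is summable over $k$, so bounding $\sqrt{\mu(B_m)\mu(B_{m'})}\le\mu(B_m)$ for $m<m'$ yields a total contribution $\ll\sum_{m=M}^N\mu(f_m)$ (for a diagonalizable group Lemma \ref{l:kakdiag} gives exponential decay, which is even easier). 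The same splitting disposes of the exceptional forms that are cusp forms: these are bounded, so $|\langle\chi_{B_l},\vf_j\rangle|\ll\mu(B_l)\le C_0/l$, and inserting the estimate $\mu(B_{m'})\le C_0/m'$ into one of the two factors, the polynomial decay $|\phi_{s_j}(a_{t_k})|\ll k^{-2(\rho-s_j)}$ combines with the factor $1/m'$ so that $\sum_{m'>m}|\phi_{s_j}(a_{t_{m'-m}})|\,\mu(B_{m'})$ is bounded uniformly in $m$ --- even when the spectral gap $\rho-s_j$ is small --- giving a contribution $\ll\sum_{m=M}^N\mu(f_m)$ again.

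The delicate case is the contribution of the residual forms $\vf_\sigma$ with $\sigma\in(\rho,2\rho)$, which are unbounded and only lie in $L^p(\G\bk G)$ for $p<\frac{2\rho}{2\rho-\sigma}$; for these one only has $|\langle\chi_{B_l},\vf_\sigma\rangle|\le\|\chi_{B_l}\|_q\|\vf_\sigma\|_p\ll_\epsilon\mu(B_l)^{\theta}$ with $\theta=\tfrac{\sigma}{2\rho}-\epsilon\in(\tfrac12,1)$. Writing $\beta=2\rho-\sigma$ (so $\phi_{\sigma-\rho}(a_{t_k})\ll k^{-2\beta}$ in the unipotent case), the residual part of $\sum_{M\le m<m'\le N}R^\cF_{m,m'}$ is then $\ll_\epsilon\sum_{M\le m<m'\le N}(m'-m)^{-2\beta}\mu(B_m)^\theta\mu(B_{m'})^\theta$. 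The key point is that for $n\ge3$ one has $2\rho\ge2$, and a short computation shows that $\sigma<2\rho$ forces (for $\epsilon$ small) both $2\beta+\theta>1$ and $2\beta+2\theta>2$; using $\mu(B_{m'})\le C_0/m'$ on one factor, the inner sum over $m'$ is then $\ll m^{1-2\beta-\theta}$ with exponent $1-2\beta-\theta<0$, and a Hölder summation in $m$ (again via $\mu(B_m)\le C_0/m$) bounds the residual contribution by $\ll\sum_{m=M}^N\mu(f_m)$ (up to a harmless additive constant). Combined with the estimate $\sum_m\langle\chi_{B_m},\vf_\sigma\rangle^2\ll\sum_m\mu(B_m)^{2\theta}\le\sum_m\mu(f_m)$ for the diagonal residual terms, this gives the asserted inequality. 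I expect the bookkeeping in this last step --- checking that the exponent inequalities hold precisely when $n\ge3$, degenerating at $n=2$ (consistently with the stronger hypotheses needed there in the remarks) --- to be the main technical point of the argument.
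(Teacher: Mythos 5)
Your overall strategy is the same as the paper's: spectral decomposition of $\chi_{B_m}$ into constant, exceptional and tempered parts, decay of spherical matrix coefficients (Proposition \ref{p:smatrixdecay}) combined with the Cartan asymptotics of Lemmas \ref{l:kakunipotent} and \ref{l:kakdiag}, and summation over the gap $l=m'-m$. Your treatment of the diagonal, of the tempered part, and of the bounded (cuspidal) exceptional forms is correct, and the diagonalizable case is indeed immediate from exponential decay. The genuine gap is in the residual contribution, exactly where you predicted the difficulty. After your Hölder summation in $m$ you obtain $\sum_{M\le m<m'\le N}(m'-m)^{-2\beta}\mu(B_m)^{\theta}\mu(B_{m'})^{\theta}\ll\bigl(\sum_{m=M}^{N}\mu(B_m)\bigr)^{\theta}$, and since $\theta<1$ this is \emph{not} $\ll\sum_{m=M}^{N}\mu(B_m)$ when the window is such that $\sum_{m=M}^{N}\mu(B_m)$ is small; no pointwise substitute is available because only an upper bound $m\mu(B_m)\le C_0$ is assumed, never a lower bound. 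So what you prove is $\sum_{m,m'=M}^{N}R^{\cF}_{m,m'}\le C\sum_{m=M}^{N}\mu(f_m)+C'$, which is strictly weaker than the stated inequality, and the additive constant is not obviously harmless downstream: Proposition \ref{p:SP} is invoked with the multiplicative form \eqref{e:QI}, over windows on which $\sum_m\mu(f_m)$ may grow arbitrarily slowly. (There is also a minor slip: your inner-sum bound $\ll m^{1-2\beta-\theta}$ is only valid when $2\beta<1$; for $2\beta\ge1$ the inner sum is $\ll m^{-\theta}$ up to a logarithm, though this does not affect the main point.)

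The missing ingredient is the paper's use of the \emph{nestedness} of the targets, and it plugs directly into the exponent condition you already verified. For $m<m'$ one has $B_{m'}\subseteq B_m$ and $B_{m'}\subseteq B_{m'-m}$, hence $\mu(B_m)^{\theta}\mu(B_{m'})^{\theta}\le\mu(B_m)\,\mu(B_{m'-m})^{2\theta-1}\ll\mu(B_m)\,(m'-m)^{-(2\theta-1)}$, using $l\mu(B_l)\le C_0$ and $2\theta-1>0$. The gap variable then carries the exponent $2\beta+2\theta-1$, and $2\beta+2\theta-1>1$ is precisely your condition $\beta+\theta>1$, which holds for $n\ge3$ with margin $\tfrac{(2\rho-1)\beta}{2\rho}-\epsilon$ and degenerates at $n=2$, as you observed. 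Summing first over $l$ and then over $m$ gives the clean multiplicative bound $\ll\sum_{m=M}^{N}\mu(B_m)$ with no additive constant; this is how the paper argues (it runs this Hölder/$L^p$ estimate uniformly over all exceptional forms rather than splitting cuspidal from residual, but that difference is cosmetic).
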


\begin{proof}
Using the spectral decomposition we can write 
$$\chi_{B_m}=\mu(B_m)+ \sum_k \<\chi_{B_m},\vf_k\>\vf_k+ f_m^0,$$
with $f_m^0\in L^2_{\rm temp}(\G\bk G)$, and hence,  for any $m,m'$ we have 
\begin{eqnarray*}
\<\pi(g_m)\chi_{B_m},\pi(g_{m'})\chi_{B_m'}\>&=& \<\pi(g_{m-m'})\chi_{B_m},\chi_{B_m'})\>\\
&=& \mu(B_m)\mu(B_{m'})\\
&&+\sum_k \<\chi_{B_m},\vf_k\>\overline{\<\chi_{B_{m'}},\vf_k\>} \<\pi(g_{m-m'})\vf_k,\vf_k\>\\
&&+\<\pi(g_{m-m'})f_m^0,f_{m'}^0\>
\end{eqnarray*}

We first consider the case of a unipotent group. In this case, after conjugating by some element of $K$, we may assume that
$g_m=n_{mx}$ for some fixed $x\in\R^{n-1}$.
Using decay of matrix coefficients from Proposition \ref{p:smatrixdecay} (together with Lemma \ref{l:kakunipotent}), we can bound for any $m,m'$ with $l=|m-m'|\geq 1$
\begin{eqnarray*}
|R_{m,m'}^\cF|&\ll& \sum_k \frac{|\<\chi_{B_m},\vf_k\>\<\chi_{B_{m'}},\vf_k\>|}{l^{2(\rho-s_k)}}+
\frac{\|f_m^0\|_2\|f_{m'}^0\|_2\log(l)}{l^{2\rho}}\\
&\ll_\epsilon &\sum_k \frac{\mu(B_m)^{\frac{\rho+s_k}{2\rho}-\epsilon}\mu(B_{m'})^{\frac{\rho+s_k}{2\rho}-\epsilon}}{l^{2(\rho-s_k)}}+
\frac{\sqrt{\mu(B_m)\mu(B_{m'})}\log(l)}{l^{2\rho}}\\
\end{eqnarray*}
where we used H\"older inequality to bound $|\<\chi_{B_m},\vf_k\>|\leq \|\vf_k\|_{p_k}\|\chi_{B_m}\|_{q_k}$, with $p_k=(\tfrac{\rho-s_k}{2\rho}+\epsilon)^{-1}$ and $q_k=(\tfrac{\rho+s_k}{2\rho}-\epsilon)^{-1}$.

Since $R_{m,m'}^\cF=R_{m',m}^\cF$ and $|R_{m,m}^\cF|\leq \mu(B_m)$ we have 
\begin{eqnarray*}
\sum_{m,m'=M}^N|R_{m,m'}^\cF|&=&\sum_{m=M}^N|R_{m,m}^\cF|+2\sum_{l=1}^{N-M}\sum_{m=M}^{N} |R_{m,m+l}^\cF| \\
&\leq&\sum_{m=M}^N\mu(B_m)+2\sum_{l=1}^{\infty}\sum_{m=M}^{N} |R^\cF_{m,m+l}|
\end{eqnarray*}
To bound the second sum for each of the exceptional $s_k\in(0,\rho)$ we can bound
$$\mu(B_m)^{\frac{\rho+s_k}{2\rho}-\epsilon}\mu(B_{m'})^{\frac{\rho+s_k}{2\rho}-\epsilon}\leq \mu(B_m)\mu(B_l)^{s_k/\rho-2\epsilon},$$
where we used that $\mu(B_{m+l})\leq \mu(B_m)$ and also  $\mu(B_{m+l})\leq \mu(B_l)$, and similarly bound $\sqrt{\mu(B_m)\mu(B_{m+l})}\leq \mu(B_m)$ to get that 
\begin{eqnarray*}
\sum_{m,m'=M}^N|R_{m,m'}^\cF|\ll_\epsilon
\left(1+ \sum_{l=1}^\infty\frac{\log(l)}{l^{2\rho}}+ \sum_{0<s_k<\rho} \sum_{l=1}^\infty\frac{\mu(B_l)^{s_k/\rho-2\epsilon}}{l^{2(\rho-s_k)}}\right)\sum_{m=M}^M \mu(B_m)
\end{eqnarray*}
Since $2\rho=n-1$ the series $\sum_{l=1}^\infty\frac{\log(l)}{l^{2\rho}}$ converges for $n\geq 3$. For the exceptional terms, let $\delta_k=\rho-s_k>0$ and let $\delta=\min_k \delta_k>0$ denote the spectral gap. Then, since $l\mu(B_l)$ is uniformly bounded, we can bound
\begin{eqnarray*}
\sum_{l=1}^\infty\frac{\mu(B_l)^{s_k/\rho-2\epsilon}}{l^{2(\rho-s_k)}}\ll \sum_{l=1}^\infty\frac{1}{l^{1+
\delta_k(2-\rho^{-1})-2\epsilon}}
\end{eqnarray*}
Now, since $\rho=\frac{n-1}{2}\geq 1$ we get that $\delta_k(2-\rho^{-1})\geq \delta_k\geq \delta$ and we can take $\epsilon>0$ sufficiently small so that $\delta-2\epsilon>\delta/2$ so the series
\begin{eqnarray*}
\sum_{l=1}^\infty\frac{\mu(B_l)^{s_k/\rho-2\epsilon}}{l^{2(\rho-s_k)}}\ll \sum_{l=1}^\infty\frac{1}{l^{1+\delta/2}}
\end{eqnarray*}
also converges implying that 
\begin{eqnarray*}
\sum_{m,m'=M}^N|R^\cF_{m,m'}|\ll
\sum_{m=M}^M \mu(B_m)
\end{eqnarray*}
where the implied constant is independent of $M,N$.

Next we consider the easier case of $\{g_m\}_{m\in\Z}$ diagonalizable. 
Using Proposition \ref{p:smatrixdecay} with Lemma \ref{l:kakdiag} for diagonalizable flows we get that for $|m-m'|=l\geq 1$
$$
|R^\cF_{m,m'}|\ll \sqrt{\mu(B_m)\mu(B_{m'})}e^{-c\alpha \ell}
$$
Hence, as before
\begin{eqnarray*}
\sum_{m,m'=M}^N|R^\cF_{m,m'}|&=&\sum_{m=M}^N|R^\cF_{m,m}|+2\sum_{l=1}^{N-M}\sum_{m=M}^{N} |R^\cF_{m,m+l}| \\
&\ll&\sum_{l=0}^{\infty}e^{-c\alpha l}\sum_{m=M}^{N} \sqrt{\mu(B_m)\mu(B_{m+l})}\\
&\ll&\sum_{m=M}^{N} \mu(B_m)\\
\end{eqnarray*}
\end{proof}

\begin{rem}
For the case of a diagonalizable flow, the proof works also for $n=2$, and the assumption that 
$\{m\mu(B_m)\}_{m\in \N}$ is bounded is not needed.
\end{rem}
\subsection{Proof of main theorems}
The proof of Theorems \ref{t:BCT}, \ref{t:asymp}, and \ref{t:diag} follow from the above results as follows.
\begin{proof}[Proof of Theorem \ref{t:BCT}]
Let $\{g_m\}_{m\in \N}$ denote an unbounded one parameter subgroup of $G=\Isom(\bH^n)$ with $n\geq 3$ and  $\{B_m\}_{m\in \N}$ a family of spherical shrinking targets. 

If $\sum_{m}\mu(B_m)<\infty$ then by the easy half of the Borel Cantelli lemma,  $\{m: xg_m\in B_m\}$ is finite for a.e. $x\in \cX$ so that indeed $\mu(\cA_h(\cB))=0$.

If $\sum_{m}\mu(B_m)=\infty$ and $\{m\mu(B_m)\}_{m\in\N}$ is uniformly bounded, then Proposition \ref{p:UQI} and \ref{p:SP} imply that for a.e. $x\in \cX$
$$\lim_{M\to\infty}\frac{\#\{0\leq m\leq M: xg_m\in B_m\}}{\sum_{m\leq M}\mu(B_m)}=1.$$
and in particular, for a.e. $x$ the set $\{m: xg_m\in B_m\}$ is unbounded so $\mu(\cA_h(\cB))=1$.

Finally, if the sequence $\{m\mu(B_m)\}_{m\in \N}$ is unbounded, use Proposition \ref{p:multi1} with 
$d=1$ (for $\{g_m\}_{m\in \Z}$ unipotent) and Proposition \ref{p:diag} (if it is diagonalizable)
to get that there is a subsequence $m_j\to\infty$ with $m_j\mu(B_{m_j})\to\infty$ such that for a.e. $x$
$$\lim_{j\to\infty}\frac{\#\{0\leq m\leq m_j: xg_m\in B_{m_j}\}}{m_j\mu(B_{m_j})}=1.$$
In particular, since  
$$ \#\{0\leq m\leq m_j: xg_m\in B_{m}\}\geq \#\{0\leq m\leq m_j: xg_m\in B_{m_j}\}\gg m_j\mu(B_{m_j})$$ 
the set $\{m\geq 0: xg_m\in B_m\}$ is unbounded for a.e. $x\in \cX$, so again $\mu(\cA_h(\cB))=1$.
\end{proof}
 \begin{proof}[Proof of Theorem \ref{t:asymp}]
 Let $\cB=\{B_m\}_{m\in \N}$ denote a sequence of spherical shrinking targets and $\{g_m\}_{m\in \Z}< G$ an inbounded one parameter group, so $\{g_m\}_{m\in \Z}$ is either unipotent or diagonalizable . For the unipotent case the result follows from Theorem \ref{t:main} with $d=1$ and for the diagonalizable case from the first part of Proposition \ref{p:diag}.
 \end{proof}
 \begin{proof}[Proof of Theorem \ref{t:diag}]
Follows from the second part of Proposition \ref{p:diag}.
 \end{proof}

 \section{Logarithm laws}
 We now apply our general results on shrinking targets to prove logarithm laws for penetration depth. 
 \begin{proof}[Proof of Theorem \ref{t:strongloglaw}]
 Let $\{g_m\}_{m\in \N}$ denote an unbounded one parameter group (then it is either unipotent or diagonalizable).
 Fix $\epsilon>0$ and let $r_m^\pm=m^{-\frac{1\pm\epsilon}{n}}$
so that $\mu(B_{r_m^\pm}(x_0))=\frac{1}{m^{1\pm\epsilon}}$. 
 
 First, since $\sum_m \mu(B_{r_m^+}(x_0))<\infty$, then for a.e. $x\in \cX$ 
 the set $\{m: xg_m\in B_{r_m^+}(x_0)\}$ is bounded, so $d(xg_m,x_0)>r_m^+$ for all sufficiently large $m$. 
 Now, unless $x$ is in the orbit of $x_0$ (which is a null set) the condition that $d(xg_m,x_0)>r_m^+$ for all sufficiently large $m$ implies that also $d_m(x,x_0)>r_m^+$ for all sufficiently large $m$. Indeed, otherwise
 there is a sequence $m_j\to\infty$ and $k_j\leq m_j$ with $d(xg_{k_j},x_0)\leq r_{m_j}^+$, so $k_j\to\infty$  and $d(xg_{k_j},x_0)\leq r_{k_j}^+$, in contradiction. 
Now, the condition that $d_m(x,x_0)>r_m^+$ for all sufficiently large $m$ implies that $\frac{-\log(d_m(x,x_0))}{\log(m)}<\frac{1+\epsilon}{n}$ for all sufficiently large $m$ and hence 
 $$ \limsup_{m\to\infty}\frac{-\log(d_m(x,x_0))}{\log(m)}\leq \frac{1+2\epsilon}{n}.$$

Next, by Corollary  \ref{c:regular}  we get that for a.e. $x\in \cX$ for all sufficiently large $m$,
$\#\{x g_k: k\leq m\}\asymp m\mu( B_{r_m^-})=m^\epsilon$. Hence in particular, for a.e. $x\in \cX$ we have that  $d_m(x,x_0)<r_m^-$ for all sufficiently large $m$ implying that  
$$\liminf_{m\to\infty}\frac{-\log(d_m(x,x_0))}{\log(m)}\geq  \frac{1-2\epsilon}{n}.$$

We thus showed that for a.e. $x\in \cX$
 $$\frac{1-\epsilon}{n}\leq \liminf_{m\to\infty}\frac{-\log(d_m(x,x_0))}{\log(m)}\leq \limsup_{m\to\infty}\frac{-\log(d_m(x,x_0))}{\log(m)}\leq \frac{1+\epsilon}{n},$$
 and since this holds for any $\epsilon>0$ we have  $\lim_{m\to\infty}\frac{-\log(d_m(x,x_0))}{\log(m)}=\frac{1}{n}$ as claimed. 
 
The proof for the cusp neighborhoods are analogous where we use the cusp neighborhoods  $B_{r_m^\pm}(\infty)$ with $r_m^\pm=\frac{\log(m)(1\pm \epsilon)}{n-1}$ instead of the shrinking balls.
 \end{proof}
 
 \begin{rem}
 When $\{g_m\}_{m\in \N}$ is diagonalizable we can also consider  non $K$-invariant distance functions.
Since the corresponding norm balls and cusp neighborhoods are still regular we can use 
Theorem \ref{t:diag} instead of Theorem \ref{t:asymp} to get the same result.
 \end{rem}
 
\begin{proof}[Proof of corollary \ref{c:hitting}]
By  \cite[Proposition 11]{GalatoloPeterlongo10}) we have that 
$$\lim_{r\to 0} \frac{\log(\tau_r(x;x_0))}{-\log r}=(\lim_{m\to\infty}\frac{-\log(d_m(x,x_0))}{\log(m)})^{-1}$$
while an obvious modification of their argument gives
$$\lim_{r\to \infty} \frac{\log(\tau_r(x;\infty))}{r}=(\lim_{m\to\infty}\frac{(d_m(x,x_0))}{\log(m)})^{-1},$$
and the result follows immediately from Theorem \ref{t:strongloglaw}.
\end{proof}
 

%
%

\end{document}